\allowdisplaybreaks \numberwithin{equation}{section}
\numberwithin{equation}{section}
\newtheorem{theorem}{Theorem}[section]
\newtheorem{proposition}[theorem]{Proposition}
\newtheorem{corollary}[theorem]{Corollary}
\newtheorem{lemma}[theorem]{Lemma}
\theoremstyle{definition}
\newenvironment{customthm}[1]
{\innercustomthm}
{\endinnercustomthm}
\newtheorem{definition}[theorem]{Definition}
\theoremstyle{remark}
\newtheorem{remark}[theorem]{Remark}
\begin{document}

\title[On helical vortex patches for 3D incompressible Euler equation]
{Structure of Green's function of elliptic equations and helical vortex patches for 3D incompressible Euler equations}

\author{Daomin Cao and Jie Wan}
	
\address{Institute of Applied Mathematics, Chinese Academy of Sciences, Beijing 100190, and University of Chinese Academy of Sciences, Beijing 100049,  P.R. China}
\email{dmcao@amt.ac.cn}
\address{School of Mathematics and Statistics, Beijing Institute of Technology, Beijing 100081,  P.R. China}
\email{wanjie@bit.edu.cn}

%\thanks{This work is partially supported by ARC}

\begin{abstract}
We develop a new
%establishing the existence of concentrated traveling-rotating helical vortex patches to 3D incompressible Euler equations in an infinite pipe. We first study
structure of the Green's function of  a second-order elliptic operator in divergence form in a 2D bounded domain. Based on this structure and the theory of rearrangement of functions, we construct concentrated traveling-rotating helical vortex patches to 3D incompressible Euler equations in an infinite pipe. By solving an equation for vorticity
\begin{equation*}
w=\frac{1}{\varepsilon^2}f_\varepsilon\left(\mathcal{G}_{K_H}w-\frac{\alpha}{2}|x|^2|\ln\varepsilon|\right) \ \ \text{in}\ \Omega
\end{equation*}
for small $ \varepsilon>0 $ and considering a certain maximization problem  for the vorticity, where $ \mathcal{G}_{K_H} $ is the inverse of an elliptic operator $ \mathcal{L}_{K_H} $ in divergence form, we get the existence  of a family of concentrated helical vortex patches, which tend asymptotically to a singular helical vortex filament evolved by the binormal curvature flow. We also get nonlinear orbital stability of the  maximizers in the variational problem under $ L^p $ perturbation when $ p\geq 2. $\\

\noindent\textbf{Keywords:} Green's function of second-order elliptic equations; Incompressible Euler equation; Helical symmetry; Maximizing of energy functional; Orbital stability.

\end{abstract}
	
\maketitle

\section{Introduction and main results}

The incompressible Euler equation  confined in a 3D  domain $ D $ is governed by
\begin{equation}\label{Euler eq}
\begin{cases}
\partial_t\mathbf{v}+(\mathbf{v}\cdot \nabla)\mathbf{v}=-\nabla P,\ \ &D\times (0,T),\\
\nabla\cdot \mathbf{v}=0,\ \ &D\times (0,T),\\
\mathbf{v}(\cdot, 0)=\mathbf{v}_0(\cdot), \ \ &D,
\end{cases}
\end{equation}
where the domain $ D\subseteq \mathbb{R}^3 $, $ \mathbf{v}=(v_1,v_2,v_3) $ is the velocity field, $ P$ is the scalar pressure and  $ \mathbf{v}_0 $ is the initial velocity field.  If $ D $ has a boundary,  the following impermeable boundary condition
is usually assumed
\begin{equation*}
\mathbf{v}\cdot \mathbf{n}=0,\ \ \partial D\times (0,T),
\end{equation*}
where $ \mathbf{n} $ is the outward unit normal to $ \partial D $.

Define the associated vorticity field   $ \mathbf{w}=(w_1,w_2,w_3)=curl \mathbf{v}=\nabla\times \mathbf{v} $, which describes the rotation of the fluid.  Then $ \mathbf{w} $ satisfies the vorticity equations (see \cite{MB, MP})
\begin{equation}\label{Euler eq2}
\begin{cases}
\partial_t\mathbf{w}+(\mathbf{v}\cdot \nabla)\mathbf{w}=(\mathbf{w}\cdot \nabla)\mathbf{v},\ \ &D\times (0,T),\\
\mathbf{w}(\cdot, 0)=\nabla\times \mathbf{v}_0(\cdot), \ \ &D.
\end{cases}
\end{equation}

%For background of the 3D incompressible Euler equation, see the classical literature .

\subsection{Binormal curvature flow}

The study of Euler equation traced back to Helmholtz \cite{He} in 1858, who first considered the vorticity equations \eqref{Euler eq2} of the flow and found that the vortex rings, which are toroidal regions (vorticity region) in which the vorticity has small cross-section, translate with a constant speed alone the axis of symmetry. The translating speed of vortex rings was then studied by Kelvin and Hick \cite{Lamb} in 1899. To be more precise, let $ l $ be any oriented closed curve with tangent vector field $\mathbf{t}$ that encircles the afore mentioned vorticity region   once and $ \sigma $ is any surface with $ l $ as its boundary. Then the circulation of a vortex is defined by
\begin{equation}\label{1001}
d=\oint_l\mathbf{v}\cdot \mathbf{t}dl=\iint_{\sigma}\mathbf{w}\cdot\mathbf{n}d\sigma.
\end{equation}
It was shown in \cite{Lamb} that if the vortex ring has radius $ r^* $, circulation $ d $ and its cross-section $\varepsilon $ is small, then the vortex ring moves with the speed
\begin{equation}\label{1002}
\frac{d}{4\pi r^*}\left( \ln\frac{8r^*}{\varepsilon}-\frac{1}{4}\right).
\end{equation}

As for the motion of general vortex filament, Da Rios \cite{DR} in 1906, and Levi-Civita \cite{LC} in 1908, formally found the general law of motion of a vortex filament with a small cross-section of radius $ \varepsilon $ and a fixed circulation, uniformly distributed around an evolving curve $ \Gamma(t) $, which is well-known as the binormal curvature flow.
More precisely, if $ \Gamma(t) $ is parameterized as $ \gamma(s, t) $, where $ s $ is the  parameter of arclength, then $ \gamma(s, t) $ asymptotically obeys a law of the form (see \cite{LC2,Ric2})
\begin{equation}\label{1003}
\partial_t \gamma=\frac{d}{4\pi}|\ln\varepsilon|(\partial_s\gamma\times\partial_{ss}\gamma)=\frac{d\bar{K}}{4\pi}|\ln\varepsilon|\mathbf{b}_{\gamma(t)},
\end{equation}
where $ d $ is the circulation of the velocity field on the boundary of sections
to the filament, which is assumed to be a constant independent of $ \varepsilon $, $ \mathbf{b}_{\gamma(t)}  $ is the unit binormal and $ \bar{K} $ is its local curvature. If we rescale $ t = | \ln\varepsilon|^{-1}\tau $, then
\begin{equation}\label{1004}
\partial_\tau \gamma=\frac{d\bar{K}}{4\pi}\mathbf{b}_{\gamma(\tau)}.
\end{equation}
Hence, under the binormal curvature flow \eqref{1004} vortex filaments
move simply in the binormal direction with speed proportional to the local curvature and the circulation, see the survey papers by Ricca \cite{Ric,Ric2} for more detail. It is worthwhile to note that, when $ \Gamma $ is a circular filament, the leading term of \eqref{1002} coincides with the coefficient of right hand side of \eqref{1003} since the local curvature $ \bar{K}=\frac{1}{r^*}. $
%The LIA   was applied to various physical problems, for instance, the induction due to electric currents in a wire \cite{DR2} and the gravitational effect associated with Saturnian rings \cite{LC3},

From mathematical justification, Jerrad and Seis \cite{JS} first gave a precise form to Da Rios’ computation under some mild conditions on a solution to \eqref{Euler eq2} which remains suitably concentrated
around an evolving vortex filament. Their result shows that under some assumptions on a solution $ \mathbf{w}_\varepsilon $ of \eqref{Euler eq2}, there holds in the sense of distribution,
\begin{equation}\label{1005}
\mathbf{w}_\varepsilon(\cdot,|\ln\varepsilon|^{-1}\tau)\to d \delta_{\gamma(\tau)}\mathbf{t}_{\gamma(\tau)},\ \ \text{as}\ \varepsilon\to0,
\end{equation}
 where $ \gamma(\tau) $ satisfies \eqref{1004}, $ \mathbf{t}_{\gamma(\tau)} $ is the tangent unit vector of $ \gamma $ and $ \delta_{\gamma(\tau)} $ is the uniform Dirac measure on the curve. For more results of this problem, see  \cite{JS2}.

The existence of a family of solutions to \eqref{Euler eq2} satisfying \eqref{1005}, where $ \gamma(\tau) $ is a given curve  evolved by the binormal flow \eqref{1004}, is well-known as the vortex filament conjecture, which is unsolved except for the filament being several kinds of special curves: the straight lines, the traveling circles and the traveling-rotating helices.
For the problem of vortex concentrating near straight lines, it corresponds to the planar Euler equations concentrating near a collection of given points governed by the 2D point vortex model, see \cite{CF,CLW,DDMW,Li,SV} for example. When the filament is a traveling circle with radius $ r^* $, by \eqref{1004} the curve is
\begin{equation}\label{1006}
\gamma(s,\tau)=\left( r^*\cos\left( \frac{s}{r^*}\right), r^*\sin\left( \frac{s}{r^*}\right), \frac{d}{4\pi r^*}\tau\right)^t,
\end{equation}
where $ \mathbf{v}^t $ is the transposition of a vector $ \mathbf{v} $. Fraenkel \cite{Fr1} first gave a construction of  vortex rings with small cross-section without change of form concentrating near a traveling circle  \eqref{1006}, see \cite{ALW, Bur, De2, DV, FB} for more results.

For vortex filament being a helix satisfying \eqref{1004},  the curve is parameterized as
\begin{equation}\label{1007}
\gamma(s,\tau)=\left( r_*\cos\left( \frac{-s-a_1\tau}{\sqrt{k^2+r_*^2}}\right),  r_*\sin\left( \frac{-s-a_1\tau}{\sqrt{k^2+r_*^2}}\right), \frac{ks-b_1\tau}{\sqrt{k^2+r_*^2}}\right)^t,
\end{equation}
where $ r_* > 0, k\neq 0 $ are constants characterizing  the distance between a point in $ \gamma(\tau) $ and the $ x_3 $-axis and the pitch of the helix, and
\begin{equation*}
a_1=\frac{dk}{4\pi (k^2+r_*^2)}, \ b_1=\frac{dr_*^2}{4\pi (k^2+r_*^2)}.
\end{equation*}
Observe that the local curvature and torsion of the helix are $ \frac{r_*}{k^2+r_*^2} $ and $ \frac{k}{k^2+r_*^2} $ respectively, and the parametrization \eqref{1007} satisfies \eqref{1004}. It   should be noted that the curve parameterized by \eqref{1007} is a traveling-rotating helix. Let us define  for any $ \theta\in[0,2\pi] $
\begin{equation*}
\bar{R}_\theta=\begin{pmatrix}
\cos\theta & \sin\theta  \\
-\sin\theta &\cos\theta
\end{pmatrix}, \  \
\bar{Q}_\theta=\begin{pmatrix}
\bar{R}_\theta & 0  \\
0 & 1
\end{pmatrix}.
\end{equation*}
By direct calculations it is easy to see that
\begin{equation*}
\gamma(s,\tau)=\bar{Q}_{\frac{a_1\tau}{\sqrt{k^2+r_*^2}}}\gamma(s,0)+\left( 0,0,-\frac{b_1\tau}{\sqrt{k^2+r_*^2}}\right)^t.
\end{equation*}
\eqref{1007} with $ k>0 $ and $ k<0 $ correspond to the left-handed helix and the right-handed helix  respectively. We always consider the  case of $ k>0 $ in this paper.
%(for consistency, throughout this paper we always choose  a right-handed Cartesian reference centered at the origin).

\subsection{2D vorticity equation}
Our purpose in this paper is to study the vortex filament conjecture in a special case that the filament is a helix. We want to construct a family of 3D Euler flow
%in infinite pipes with circular cross section
such that the associated vorticity field $ \mathbf{w}_\varepsilon $ concentrates near a traveling-rotating helical vortex filament \eqref{1007}.
%Here infinite pipes are defined as follows: For any $ R^*>0 $, define $ B_{R^*}(0)\times \mathbb{R}=\{(x_1,x_2,x_3)\mid (x_2,x_2)\in B_{R^*}(0), x_3\in \mathbb{R}\} $ an infinite pipe in $ \mathbb{R}^3 $ whose section is a disc with radius $ R^* $.
Let us first  define solutions of Euler equation with helical symmetry and deduce it to a 2D vorticity equation, see for example \cite{Du,ET}. For $ k>0 $, define a group of one-parameter helical transformations $ \mathcal{H}_k=\{H_{\bar{\rho}}:\mathbb{R}^3\to\mathbb{R}^3\mid  \bar{\rho}\in\mathbb{R}\} $, where
\begin{equation*}
H_{\bar{\rho}}(x_1,x_2,x_3)^t=(x_1\cos\bar{\rho}+x_2\sin\bar{\rho}, -x_1\sin\bar{\rho}+x_2\cos\bar{\rho}, x_3+k\bar{\rho})^t.
\end{equation*}
%So $ H_{\rho} $ is a superposition of a rotation in $x_1Ox_2$ plane and a translation in $x_3$ axis, that is,  $ H_{\rho}(x)=\bar{Q}_{\rho}(x)+k\rho(0,0,1) $.  Since the helical vortices we are to construct is translating-rotating symmetric, domains of the flow must be   helical domains with  rotating symmetry about $ x_3 $ axis, which are  the whole space and  infinite pipes with circular cross section.
%Clearly, $ B_{R^*}(0)\times \mathbb{R} $ is invariant under the group $ \mathcal{G}_k $.
Define  the field of tangents of symmetry lines of $ \mathcal{H}_k $
\begin{equation*}
\overrightarrow{\zeta}= (x_2, -x_1, k)^t.
\end{equation*}

Suppose that the domain $ D  $ is    helical, that is,  $ H_{\bar{\rho}}(D)=D $ for any $\bar{\rho}$.
%So $ D $ is invariant under the group $ \mathcal{G}_k $.
Let $ \Omega=D\cap\{x\mid x_3=0\} $ be the section of $ D $ over $ x_1Ox_2 $ plane. Then $ D  $ can be generated by $ \Omega $ by  letting $ D=\cup_{\bar{\rho}\in\mathbb{R}}H_{\bar{\rho}}(\Omega) $. Hereafter we suppose that $ \Omega $ is simply-connected bounded domain with $ C^\infty $ boundary.
%Throughout this paper, we always assume that  $ \Omega $ is a simply-connected bounded domain with $ C^\infty $ boundary and $ D $ is a helical domain generated by $ \Omega $.

Now one can define helical functions and vector fields in a helical domain $ D $. A scalar function $ h $ is  $ helical $, if $ h(H_{\bar{\rho}}(x))=h(x) $ for any $ \bar{\rho}\in\mathbb{R}, x\in D. $
%It is not hard to see that a $ C^1 $ function $ h $ is helical if and only if
%\begin{equation*}
%\overrightarrow{\zeta}\cdot\nabla h=0.
%\end{equation*}
%B_{R^*}(0)\times \mathbb{R}
A vector field $ \mathbf{h}=(h_1,h_2,h_3) $ is $ helical $, if $ \mathbf{h}(H_{\bar{\rho}}(x))=\bar{Q}_{\bar{\rho}} \mathbf{h}(x) $ for any $ \bar{\rho}\in\mathbb{R}, x\in D. $
%Then a $ C^1 $ vector field $ \mathbf{h} $ is helical if and only if
%\begin{equation*}
%\overrightarrow{\zeta}\cdot\nabla \mathbf{h}=\mathcal{R}\mathbf{h},
%\end{equation*}
%where $ \mathcal{R}=\begin{pmatrix}
%0 & 1 & 0 \\
%-1 &0 & 0 \\
%0 & 0 & 0
%\end{pmatrix} $.
Helical solutions of \eqref{Euler eq} are then defined as follows.
\begin{definition}
	A function pair ($\mathbf{v}, P$) is called a $ helical$ solution  of \eqref{Euler eq} in $ D $, if ($\mathbf{v}, P$) satisfies \eqref{Euler eq} and both vector field $ \mathbf{v} $ and scalar function $ P $ are helical.
\end{definition}
Similar to \cite{ET}, we assume that helical solutions satisfy  the \textit{orthogonality condition}:
\begin{equation}\label{ortho}
\mathbf{v}\cdot \overrightarrow{\zeta}=0.
\end{equation}
%that is, the velocity field and $ \overrightarrow{\zeta} $ are orthogonal.
\eqref{ortho} is also called the no-swirl condition. From \cite{ET} we know that, under the assumption \eqref{ortho},  the vorticity field $ \mathbf{w} $ satisfies
\begin{equation*}\label{w formula}
\mathbf{w}=\frac{w_3}{k}\overrightarrow{\zeta},
\end{equation*}
where $ w_3=\partial_{x_1}v_2-\partial_{x_2}v_1 $ is the third component of vorticity field $ \mathbf{w} $, which is a helical function.
%and satisfies
%\begin{equation}\label{vor eq}
%\partial_t w_3+(\mathbf{v}\cdot \nabla)w_3=0.
%\end{equation}
Moreover, if define $ w(x_1,x_2,t)=w_3(x_1,x_2,0,t) $, then $ w $ satisfies the 2D vorticity-stream equations

\begin{equation}\label{vor str eq}
\begin{cases}
\partial_t w+\nabla^\perp\varphi\cdot \nabla w=0,\ \ &\ \Omega\times(0,T),\\
w=\mathcal{L}_{K_H}\varphi,\ \ & \ \Omega\times(0,T),\\
\varphi=0,\ \ & \ \partial \Omega\times(0,T),\\
w(\cdot,0)=w_0(\cdot),\ \ & \ \Omega,
\end{cases}
\end{equation}
where $ \varphi $ is called the stream function,  $ \mathcal{L}_{K_H}\varphi:=-\text{div}(K_H(x_1,x_2)\nabla\varphi) $ is a second order elliptic operator of divergence type with the coefficient matrix
\begin{equation}\label{coef matrix}
K_H(x_1,x_2)=\frac{1}{k^2+x_1^2+x_2^2}
\begin{pmatrix}
k^2+x_2^2 & -x_1x_2 \\
-x_1x_2 &  k^2+x_1^2
\end{pmatrix},
\end{equation}
and $ \perp $ denotes the clockwise rotation through $ \pi/2 $, i.e., $ (a,b)^\perp=(b,-a) $. Note that $ K_H $ is strictly positive-definite, by the elliptic regularity theory, for any $ p\in(1,+\infty) $  one can define a continuous linear operator $ \mathcal{G}_{K_H} $ from $ L^p(\Omega) $ to $ W^{2,p}\cap W^{1,p}_0(\Omega) $ such that $ \varphi=\mathcal{G}_{K_H}w $ for any $ w\in L^p(\Omega)  $ (see also Proposition \ref{exist of G_K} in section 2). Thus \eqref{vor str eq} can be rewritten as the following vorticity equations
\begin{equation}\label{vor str equ2}
\begin{cases}
\partial_t w+\nabla^\perp\mathcal{G}_{K_H}w\cdot \nabla w=0,\ \ & \ \Omega\times(0,T),\\
w(\cdot,0)=w_0(\cdot),\ \ & \ \Omega.
\end{cases}
\end{equation}
%For a solution pair $ (w,\varphi) $ of \eqref{vor str eq}, one can recover    helical velocity field $ \mathbf{v} $ and vorticity field by $ \mathbf{w} $ of \eqref{Euler eq2} by using \eqref{103}, \eqref{102}, $ \mathbf{v}(x, t) = \bar{Q}_{\frac{x_3}{k}} \mathbf{v}\left( H_{-\frac{x_3}{k}}(x), t\right)  $ and \eqref{w formula}.
If $ p>4/3 $, then $ \nabla^\perp\mathcal{G}_{K_H}w\in W^{1,p}(\Omega)\subset L^{p'}(\Omega) $, where $ p' $ is the conjugate exponent of p. Thus we can give the definition of weak solutions to the vorticity equation \eqref{vor str equ2} since $  w\cdot\nabla^\perp\mathcal{G}_{K_H}w $ is integrable.
\begin{definition}
	Suppose $p\in(4/3,+\infty]$. We call $w(x,t)\in L^\infty((0,+\infty);L^p(\Omega))$ a weak solution to \eqref{vor str equ2} if	for all $\xi\in C_c^{\infty}(\Omega\times[0,+\infty))$
	\begin{equation}\label{997}
	\int_\Omega w_0(x)\xi(x,0)dx+\int_0^{+\infty}\int_\Omega w(\partial_t\xi+\nabla\xi\cdot \nabla^\perp \mathcal{G}_{K_H}w)dxdt=0.
	\end{equation}

\end{definition}

when $p=+\infty$, the existence and uniqueness result for weak solution to vorticity equation \eqref{vor str equ2} was firstly proved by  \cite{ET}. For general $p>4/3$, by using an approximation procedure and the Diperna-Lions theory of linear transport equations, \cite{Ben} proved the following existence result.

\begin{customthm}{A}\label{A}
 Let $ \Omega $ be a simply-connected bounded domain with $ C^\infty $ boundary.	Suppose $4/3<p<+\infty$ and $w_0\in L^p(\Omega)$. Then there exists a weak solution $w(x,t)\in L^\infty((0,+\infty);L^p(\Omega))$ to the vorticity equation \eqref{vor str equ2}. Moreover, if $ p\geq2 $, then
	\begin{itemize}
		\item[(i)] all $L^\infty( (0,+\infty); L^p(\Omega))$ solutions belong to $C( [0,+\infty); L^p(\Omega))$;
		\item[(ii)] for any weak solution $w(x,t)\in L^{\infty}((0,+\infty);L^p(\Omega))$,  we have $w(x,t)\in \textbf{R}_{w_0}$ for all $t\geq 0$, where $\textbf{R}_{w_0}$ denotes the rearrangement class of $w_0$,
		\begin{equation*}
		\textbf{R}_{w_0}:=\{ v \in L^1_{loc}(\Omega)\mid  |\{v>a\}|=|\{w_0>a\}| ,\forall a\in \mathbb R\}.
		\end{equation*}
%		\item[(iii)] for any $L^\infty( (0,+\infty); L^p(D))$ solutions, the angular momentum is conserved, or equivalently,
%		\[J(t)=J(0),\,\,\forall t\in[0,+\infty),\,\,\text{where }J(t):=\int_D\int_D|x|^2\omega(x,t)dx;\]
%		\item[(iv)] if $p\geq 3/2$, then the kinetic energy of the fluid is conserved, or equivalently,
%		\[E(t)=E(0),\,\,\forall t\in[0,+\infty),\,\,\text{where }E(t):=\frac{1}{2}\int_D\int_DG(x,y)\omega(x,t)\omega(y,t)dxdy;\]
	\end{itemize}
\end{customthm}
A typical weak solution  for characterizing an isolated region of vorticity with jump discontinuity is the vortex patch solution, that is, the initial vorticity has the form
\begin{equation}\label{patch}
w_0(x)=\frac{1}{\varepsilon^2} \textbf{1}_{A_0},
\end{equation}
where $ \textbf{1}_{A_0} $ the characteristic function of $ {A_0} $, namely $ \textbf{1}_{A_0}=1 $ in $ {A_0} $ and $ \textbf{1}_{A_0}=0 $ in $ {A_0}^c  $,  and $\varepsilon$ is the vorticity strength parameter. Clearly from Theorem \ref{A}, there exists the unique solution $ w(x,t) $ of \eqref{vor str equ2} being of the form $ w(\cdot,t)=\textbf{1}_{A_t} $, where $ |A_0|=|A_t| $ for any $ t>0 $. More results of global well-posedness of solutions to Euler equation \eqref{Euler eq} with helical symmetry can be seen in \cite{AS,BLN,Du,Jiu} for instance.
%It is worth noting that in \cite{Ben}, the author also obtained nonlinear stability of steady smooth solutions to \eqref{vor str equ2} under some  conditions on $ \frac{\nabla \mathcal{G}_{K_H}w}{\nabla w} $.

Observe that the motion of  the intersection point of the helix \eqref{1007} and the $ x_1Ox_2 $ plane is a clockwise rotation about the origin with constant angular velocity, so it suffices to construct  a family of rotation-invariant solutions to \eqref{vor str equ2}, such that the associated helical vorticity field $ \mathbf{w}_\varepsilon $ concentrates near the curve \eqref{1007}.

Thereafter we assume that $ D $ is an infinite pipe: for any $ R^*>0 $, define $ D= B_{R^*}(0)\times \mathbb{R}=\{(x_1,x_2,x_3)\mid (x_1,x_2)\in B_{R^*}(0), x_3\in \mathbb{R}\} $. So $ \Omega=B_{R^*}(0) $. Let $ \alpha $ be a constant. We look for solutions of \eqref{vor str equ2} being of the form
\begin{equation}\label{104}
w(x',t)=W(\bar{R}_{-\alpha|\ln\varepsilon| t}(x')),
%\ \ \varphi(x',t)=\varPhi(\bar{R}_{-\alpha|\ln\varepsilon| t}(x')),
\end{equation}
where $ x'=(x_1,x_2)\in B_{R^*}(0)$. Then one computes directly  that $ W  $ satisfies
\begin{equation}\label{rot eq}
\begin{split}
\nabla W\cdot \nabla^\perp \left( \mathcal{G}_{K_H}W-\frac{\alpha}{2}|x'|^2|\ln\varepsilon|\right) =0.
%\\W=\mathcal{L}_H\varPhi,\\
%\varPhi|_{\partial B_{R^*}(0)}=0.
\end{split}
\end{equation}
So formally if
\begin{equation}\label{rot eq2}
\begin{split}
W=f_\varepsilon\left(\mathcal{G}_{K_H}W-\frac{\alpha}{2}|x'|^2|\ln\varepsilon|\right), \ \ \ B_{R^*}(0),
%\varPhi=0,\ &\text{on}\ \partial B_{R^*}(0),
\end{split}
\end{equation}
for some function $ f_\varepsilon $, then \eqref{rot eq} automatically holds.
%In the sequel we write $ x' $ as $ x=(x_1,x_2) $ and look for  solutions of a semilinear elliptic equations
%\begin{equation}\label{rot eq2}
%\begin{cases}
%-\text{div}\cdot(K_H(x)\nabla\varPhi)=\frac{1}{\varepsilon^2}\left( \varPhi-\left( \frac{\alpha}{2}|x|^2+\beta\right) |\ln\varepsilon|\right)^p_+,\  &x\in B_{R^*}(0),\\
%\varPhi(x)=0,\ &x\in \partial B_{R^*}(0),
%\end{cases}
%\end{equation}
%where $ p>1 $, $ \alpha,\beta $ are constants to be determined later.
For a solution $ W $ of \eqref{rot eq2}, one can get a rotating-invariant solution  $w$ of \eqref{vor str equ2} with angular velocity $ \alpha|\ln\varepsilon| $ by simply using  \eqref{104}. Observe that if we define $ \varPhi=\mathcal{G}_{K_H}W $, then it is equivalent to solve a semilinear elliptic problem in divergence form
\begin{equation*}
\begin{cases}
-\text{div}(K_H(x)\nabla  \varPhi)=f_\varepsilon\left(\varPhi-\frac{\alpha}{2}|x'|^2|\ln\varepsilon|\right), \ \ & \ B_{R^*}(0),\\
\varPhi=0,\ & \ \partial B_{R^*}(0),
\end{cases}
\end{equation*}

For a helix  \eqref{1007}, there are a few results of existence of true solutions of \eqref{Euler eq2} concentrating on this curve.  D$\acute{\text{a}}$vila et al. \cite{DDMW2} considered smooth helical Euler flows concentrating near a single  helix  in the whole space $ \mathbb{R}^3  $  by considering a Liouville type equation
\begin{equation*}
\begin{split}
-\text{div}(K_H(x)\nabla u)=f_\varepsilon\left( u-\alpha|\ln\varepsilon|\frac{|x|^2}{2}\right) \ \ \text{in}\ \ \mathbb{R}^2,
\end{split}
\end{equation*}
where  $ f_\varepsilon(t)=\varepsilon^2e^t $ and $ \alpha $ is chosen properly. Note that by the choice of  $ f_\varepsilon $, the support set of vorticity maybe still the whole plane, rather than a compact set with small diameter.
%Existence of $ C^1 $ helical solutions to \eqref{Euler eq} with small cross-section concentrating near the helix \eqref{1007} in infinite pipes was proven in \cite{CW} by using the reduction method.

\subsection{Main results}
Our main results are divided into two parts: the expansion of Green's function of a second-order elliptic operator with Dirichlet condition, and the existence and orbital stability of concentrated helical vortex patches to 3D Euler equations in infinite pipes.

It follows from \eqref{rot eq2} that the Green's function of $ \mathcal{L}_{K_H} $ appearing
in the construction of the stream function  plays an important role in constructing concentrated helical vorticity field to Euler equations \eqref{Euler eq}. Thus we shall need to understand the structure of Green's function of  the elliptic operator $ \mathcal{L}_{K_H} $.  To this end, we first study  Green's function of a linear problem
\begin{equation}\label{diver form}
\begin{cases}
\mathcal{L}_Ku:=-\text{div}(K(x)\nabla u)= f,\ \ &x\in U,\\
u=0,\ \ &x\in\partial U,
\end{cases}
\end{equation}
where $ U\subset \mathbb{R}^2 $ is a simply-connected bounded domain with smooth boundary, $ K=(K_{i,j})_{2\times2} $ is a positive-definite matrix satisfying
\begin{enumerate}
	\item[($\mathcal{K}$1).]  $ K_{i,j}(x)\in C^{\infty}(\overline{U}) $ for $ 1\le i,j\le 2. $
	\item[($\mathcal{K}$2).] $ -\text{div}(K(x)\nabla \cdot) $ is uniformly elliptic, that is, there exist  $ \Lambda_1,\Lambda_2>0 $ such that $$ \Lambda_1|\zeta|^2\le (K(x)\zeta|\zeta) \le \Lambda_2|\zeta|^2,\ \ \ \ \forall\ x\in U, \ \zeta\in \mathbb{R}^2.$$
\end{enumerate}
Since $ K $ satisfies  $(\mathcal{K}1)$--$(\mathcal{K}2)$, one can define a Green's operator $ \mathcal{G}_K  $ such that for every $ f\in L^{p}(U) $ with $ p>1 $, the function $ u=\mathcal{G}_Kf $ is a weak solution to the problem \eqref{diver form}, see Proposition \ref{exist of G_K}.

Let $ \Gamma(x)=-\frac{1}{2\pi}\ln|x|$ be the fundamental solution of the Laplacian $ -\Delta $ in $ \mathbb{R}^2 $ and $ T_x $ be a $ C^\infty $ positive-definite matrix-valued function determined by $ K $ satisfying
\begin{equation}\label{matrix T1}
T_x^{-1}(T_x^{-1})^{t}=K(x)\ \ \ \ \forall\ x\in U.
\end{equation}
Clearly  by the Cholsky decomposition and the properties of positive-definite matrix, such $ T_x $ exists and is unique.

%For any vectors $ \mathbf{a},\mathbf{b} $ in $\mathbb{R}^2 $, denote $ (K(x)\mathbf{a}|\mathbf{b})=\sum_{i,j=1}^2K_{i,j}(x)a_ib_j $.

Our first result shows a structure of the Green's function of \eqref{diver form}   as follows.
\begin{theorem}\label{expe of G_K}
	Let $ q>2. $ There exists a function $ S_K\in C^{0,\gamma}_{loc} (U\times U) $ for some $ \gamma\in(0,1) $ such that for every $f \in L^{q}(U)$ and every $ x\in U $,
	\begin{equation}\label{2-11}
	\begin{split}
	\mathcal{G}_Kf(x)=\int_{U}\left (\frac{\sqrt{\det K(x)}^{-1}+\sqrt{\det K(y)}^{-1}}{2}\Gamma\left (\frac{T_x+T_y}{2}(x-y)\right )+S_K(x,y)\right)f(y)dy.
	\end{split}
	\end{equation}
	Moreover, 
	\begin{equation}\label{2-111}
	S_K(x,y)=S_K(y,x)  \ \ \ \ \forall\ x,y \in U,
	\end{equation}
and there is a positive constant $C$ such that
	\begin{equation}\label{2-12}
	S_K(x,y)\leq C  \ \ \ \ \forall\ x,y \in U.
	\end{equation}
	
\end{theorem}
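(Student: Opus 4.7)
The plan is to realize \eqref{2-11} as a Levi parametrix identity. Denote the Green's kernel of $\mathcal{G}_K$ by $G_K(x,y)$, set
\[
H(x,y):=\frac{\sqrt{\det K(x)}^{-1}+\sqrt{\det K(y)}^{-1}}{2}\,\Gamma\!\left(\tfrac{T_x+T_y}{2}(x-y)\right),
\]
and define $S_K:=G_K-H$. The principal task is to show that, distributionally in $x$ for each fixed $y\in U$,
\[
\mathcal{L}_K^{(x)}H(\cdot,y)=\delta_y+R_K(\cdot,y),
\]
where the remainder kernel $R_K$ satisfies $|R_K(x,y)|\le C\,|x-y|^{-1}$ uniformly on compact subsets of $U\times U$. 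This is where the precise algebraic form of $H$ matters: the identity $T_x K(x)T_x^t=I$ (immediate from \eqref{matrix T1}) ensures that contracting the Hessian of $\Gamma\circ M$ with $K^{ij}$ at $x=y$ reproduces $\Delta\Gamma$ and thus a clean $\delta_y$, provided the amplitude $A(x,y)=\tfrac12(\sqrt{\det K(x)}^{-1}+\sqrt{\det K(y)}^{-1})$ supplies the normalization $A(y,y)=|\det T_y|$. The symmetric midpoint matrix $M(x,y)=(T_x+T_y)/2$ agrees with $T_y$ to first order in $x-y$, so all other contributions (derivatives of $A$ and $M$, and the commutator $K(x)-K(y)$) are at worst $O(|x-y|^{-1})$, i.e.\ locally $L^q$ for every $q\in(1,2)$.

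Once this parametrix identity is in hand, $S_K(\cdot,y)$ is the unique weak solution of
\[
\mathcal{L}_K S_K(\cdot,y)=-R_K(\cdot,y)\ \text{in }U,\qquad S_K(\cdot,y)\big|_{\partial U}=-H(\cdot,y)\big|_{\partial U}.
\]
For any compact $E\Subset U$, $R_K(\cdot,y)\in L^q(U)$ uniformly in $y\in E$ for some $q>1$, and the boundary trace $-H(\cdot,y)|_{\partial U}$ depends smoothly and boundedly on $y\in E$. Since $K\in C^\infty(\overline U)$ and $\partial U$ is smooth, $C^{0,\gamma}$-regularity for divergence-form operators (De~Giorgi--Nash--Moser in the interior plus Schauder-type boundary estimates) yields some $\gamma\in(0,1)$ and a constant $C(E)$ with $\|S_K(\cdot,y)\|_{C^{0,\gamma}(\overline U)}\le C(E)$ for every $y\in E$. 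For the global upper bound \eqref{2-12}, boundedness of $U$ and of $T_z$ on $\overline U$ force $|M(x-y)|\le C_1$ on $\overline U\times\overline U$, hence $\Gamma(M(x-y))\ge-C_2$; since $A>0$ is bounded, $H\ge-C_3$ uniformly, so $-H(\cdot,y)|_{\partial U}\le C_3$. Splitting $S_K(\cdot,y)$ into a harmonic piece with this boundary data (bounded above by $C_3$ via the weak maximum principle) and a zero-boundary piece with RHS $-R_K(\cdot,y)\in L^q$ (bounded by Moser's $L^\infty$-estimate, valid for $q>n/2=1$) gives $S_K\le C$ throughout $U\times U$.

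The symmetry \eqref{2-111} is automatic: self-adjointness of $\mathcal{L}_K$ gives $G_K(x,y)=G_K(y,x)$, and $H$ is manifestly symmetric because $A$ and $M=(T_x+T_y)/2$ are symmetric functions of $(x,y)$ while $|M(x-y)|=|M(y-x)|$. Combining the uniform $x$-H\"older estimate with the corresponding $y$-H\"older estimate (obtained by symmetry) and a triangle inequality gives joint H\"older continuity on compact subsets of $U\times U$, so $S_K\in C^{0,\gamma}_{\text{loc}}(U\times U)$. The main obstacle is the parametrix calculation: exhibiting the cancellation of the $|x-y|^{-2}$-singular contributions in $\mathcal{L}_K^{(x)}H(\cdot,y)$ against $\delta_y$ requires careful bookkeeping of derivatives of $A$, $M$, and $\Gamma$, together with the key identity $T_y K(y)T_y^t=I$; after that computation, everything else reduces to standard elliptic theory for smooth coefficients on smooth domains.
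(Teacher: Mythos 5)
Your proposal is correct and is essentially the paper's own argument: the same parametrix $G_0(x,y)=\frac{\sqrt{\det K(x)}^{-1}+\sqrt{\det K(y)}^{-1}}{2}\,\Gamma\big(\frac{T_x+T_y}{2}(x-y)\big)$, the same key cancellation $T_yK(y)T_y^{t}=I$ (so the frozen-coefficient operator reproduces $\Delta_z\Gamma$ and the remainder is $O(|x-y|^{-1})$, hence in $L^{q}$ for $1<q<2$), and the same definition of $S_K(\cdot,y)$ as the solution of a Dirichlet problem with the remainder as right-hand side and boundary data $-G_0(\cdot,y)$, from which the upper bound and local H\"older regularity follow. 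The only organizational differences are that you import the existence, symmetry and kernel representation of the Dirichlet Green function $G_K$ from standard theory and invoke De Giorgi--Nash--Moser/Stampacchia estimates, whereas the paper proves the representation formula and the symmetry of $S_K$ directly (an excised-ball Green identity after the change of variables $x'=T_p x$, followed by a two-density integral identity) and obtains H\"older continuity from $W^{2,q}$ regularity plus Sobolev embedding; note also that for the global bound \eqref{2-12} your remainder estimate should be stated uniformly on $\overline U\times\overline U$ (which indeed holds since $K\in C^{\infty}(\overline U)$), not merely on compact subsets of $U\times U$.
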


Theorem \ref{expe of G_K} implies that the  Dirichlet problem in divergence form \eqref{diver form} has a
Green's  function $ G_K : U\times U \to \mathbb{R}$  defined for each $ x,y\in U $ with $ x\neq y $ by
\begin{equation}\label{Green}
G_K(x, y)=\frac{\sqrt{\det K(x)}^{-1}+\sqrt{\det K(y)}^{-1}}{2}\Gamma\left (\frac{T_x+T_y}{2}(x-y)\right )+S_K(x,y),
\end{equation}
which is not known in the existing literature. It should be noted that when choosing the matrix $ K(x)=Id $ and $ \frac{1}{b(x)}Id $, which correspond to 2D Euler flows and lake equations respectively, \eqref{Green} coincides with the classical results, see \cite{CLW,De2}.

\begin{remark}
	Below we give examples of the matrix $ K $ in \eqref{diver form} to explain the expansion of Green's function in Theorem \ref{expe of G_K}.
	
\noindent$ Example $ 1. If $ K(x)=Id $, then \eqref{diver form} is the standard Laplacian problem, which corresponds to the vorticity-stream formulation to 2D Euler equations, see \cite{CF,CLW,DV,T} for example. By Theorem \ref{expe of G_K},  Green's function becomes
	\begin{equation*}
	G_1(x,y)=\Gamma(x-y)+S_1(x,y),\ \ \ \ \forall\ x,y\in U.
	\end{equation*}
	So in this case, $ S_1(x,y)=-H(x,y) $, where $ H(x,y) $ is the regular part of Green's function of $ -\Delta $ in $ U $ with zero-Dirichlet data.
	
\noindent$ Example $ 2. If  $ K(x)=\frac{1}{b(x)}Id $, where $ b\in C^1(U) $ and $ \inf_{U}b>0 $, then \eqref{diver form}  corresponds to vorticity-stream formulation to the 2D lake equations, see \cite{De2} for example. It is not hard to get that $ \det K=\frac{1}{b^2} $ and $ T=\sqrt{b}Id $. From Theorem \ref{expe of G_K},  Green's function becomes
	\begin{equation*}
	G_b(x,y)=\frac{b(x)+b(y)}{2}\Gamma\left (\frac{\sqrt{b(x)}+\sqrt{b(y)}}{2}(x-y)\right )+S_b(x,y) \ \ \ \ \forall\ x,y\in U,
	\end{equation*}
	which coincides with results in \cite{De2}.
\end{remark}

\begin{remark}
There are many articles showing the existence, uniqueness and $ L^p$-estimate of the Green's function $ G_K $ of \eqref{diver form}, see   \cite{GT,GW82,KN85,LSW63,TKB13} for example. However, few results give an explicit decomposition of $ G_K $. To the best of our knowledge, the structure of $ G_K $ in Theorem \ref{expe of G_K} is new and we believe that it is of independent interest.

\end{remark}

%To state our results, we need to introduce some notations. For two sets $ A,B $, define $ dist(A,B)=\min_{x\in A,y\in B}|x-y| $ the distance between sets $ A$ and $B $ and $ diam(A) $ the diameter of the set $ A $.

The structure of Green's function of the Dirichlet problem \eqref{diver form} shown in Theorem \ref{expe of G_K} can be used to construct concentrated helical vorticity field to 3D incompressible Euler equations in infinite pipes. Our second result is  the existence of concentrated helical vortex patches with small cross-section in $ B_{R^*}(0)\times \mathbb{R} $, which tend asymptotically to a single left-handed helix  \eqref{1007}. In the sequel, we always assume $ \Omega=B_{R^*}(0) $.

\begin{theorem}\label{thm01}
$\mathrm{[Existence]}$ Let $ k>0 $, $ d>0 $ and $ r_*\in (0,R^*) $ be any given numbers. Let  $ \gamma(\tau) $ be the helix parameterized by equation \eqref{1007}. Then for any $ \varepsilon\in (0,\min\{1, \sqrt{|\Omega|/d}\})$, there exists
a solution pair $ (\mathbf{v}_\varepsilon, P_\varepsilon)(x,t)  $ of  \eqref{Euler eq} such that the support set of  $ \mathbf{w}_\varepsilon $ is a topological traveling-rotating helical tube that does not change form and concentrates near $ \gamma(\tau) $ in sense of \eqref{1005}, that is for all $ \tau $,
\begin{equation*}
\mathbf{w}_\varepsilon(\cdot,|\ln\varepsilon|^{-1}\tau)\to d \delta_{\gamma(\tau)}\mathbf{t}_{\gamma(\tau)},\ \ \text{as}\ \varepsilon\to0.
\end{equation*}
Moreover, the following properties hold:
\begin{enumerate}
\item Define $\omega_\varepsilon(x_1,x_2,t)$ the third component of $\mathbf{w}_\varepsilon(x_1,x_2,0,t) $. Then
\begin{equation*}
\omega_{\varepsilon}=\frac{1}{\varepsilon^2}\mathbf{1}_{\{\mathcal{G}_{K_H}\omega_{\varepsilon}-\frac{\alpha |x|^2}{2}\ln\frac{1}{\varepsilon} -\mu^{\varepsilon}>0\}},
\end{equation*}
where $ \alpha=\frac{d}{4\pi k\sqrt{k^2+r_*^2}}  $, and $\mu^{\varepsilon} $ is a  Lagrange multiplier.
\item Define $ \bar{A}_\varepsilon=supp(\omega_\varepsilon) $ the cross-section of $ \mathbf{w}_\varepsilon $. Then there are $ r_1,r_2>0 $ such that
\begin{equation*}
r_1\varepsilon\leq diam(\bar{A}_\varepsilon)\leq r_2\varepsilon.
\end{equation*}
\item $ \mathbf{v}_\varepsilon\cdot \mathbf{n}=0$ on $\partial B_{R^*}(0)\times \mathbb{R}. $
\end{enumerate}
\end{theorem}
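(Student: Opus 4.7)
The plan is to reduce everything to a constrained maximization problem for the reduced 2D vorticity, apply Theorem \ref{expe of G_K} to analyse the concentration, and finally lift the resulting planar profile to a helical 3D field. Concretely, looking for solutions of the form \eqref{104}, it suffices to produce $W\in L^\infty(\Omega)$ solving the profile equation \eqref{rot eq2} with $f_\varepsilon$ the characteristic of $(0,\infty)$, scaled by $\varepsilon^{-2}$. To this end I will consider the energy
\[
E_\varepsilon(w)=\tfrac12\int_\Omega w\,\mathcal{G}_{K_H}w\,dx-\tfrac{\alpha|\ln\varepsilon|}{2}\int_\Omega |x|^2 w\,dx
\]
over the convex admissible class $\mathcal{A}_\varepsilon=\{w\in L^\infty(\Omega):0\le w\le\varepsilon^{-2},\ \int_\Omega w=d\}$. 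Existence of a maximizer $w_\varepsilon$ follows from weak-$*$ compactness of $\mathcal{A}_\varepsilon$ together with the compactness of $\mathcal{G}_{K_H}\colon L^p(\Omega)\to W^{2,p}\cap W^{1,p}_0(\Omega)$, which makes the quadratic part of $E_\varepsilon$ sequentially weakly-$*$ continuous.

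Next I compute the first variation along admissible perturbations $w_\varepsilon+t(\tilde w-w_\varepsilon)$ with $\tilde w\in\mathcal{A}_\varepsilon$. Standard bathtub-type arguments, combined with the positivity of $\varepsilon^{-2}$ and the continuity of $\mathcal{G}_{K_H}w_\varepsilon-\tfrac{\alpha|x|^2}{2}|\ln\varepsilon|$, force the maximizer to be of bang-bang form $w_\varepsilon=\varepsilon^{-2}\mathbf 1_{A_\varepsilon}$ with $|A_\varepsilon|=d\varepsilon^2$, and yield the Euler--Lagrange relation
\[
w_\varepsilon=\tfrac{1}{\varepsilon^2}\mathbf 1_{\{\mathcal{G}_{K_H}w_\varepsilon-\tfrac{\alpha|x|^2}{2}|\ln\varepsilon|>\mu^\varepsilon\}}
\]
for some Lagrange multiplier $\mu^\varepsilon$. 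This already establishes property (1) of the theorem once the profile has been lifted.

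The main work, and the main obstacle, is the asymptotic localization of $A_\varepsilon$ to a single point on the circle $\{|x|=r_*\}$. Using the Green's function decomposition \eqref{Green} together with the explicit formula $\sqrt{\det K_H(x)}^{-1}=k^{-1}\sqrt{k^2+|x|^2}$ obtained from \eqref{coef matrix}, a direct computation shows that for any test patch $\varepsilon^{-2}\mathbf 1_{B_\rho(x_0)}$ with $|B_\rho|=d\varepsilon^2$,
\[
E_\varepsilon(\varepsilon^{-2}\mathbf 1_{B_\rho(x_0)})=\frac{d^2}{4\pi k}\sqrt{k^2+|x_0|^2}\,|\ln\varepsilon|-\frac{\alpha d|x_0|^2}{2}|\ln\varepsilon|+O(1),
\]
the leading $|\ln\varepsilon|$-coefficient being maximized precisely at $|x_0|=r_*$ thanks to the choice $\alpha=\tfrac{d}{4\pi k\sqrt{k^2+r_*^2}}$. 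To transfer this to $w_\varepsilon$, I combine a rearrangement/Riesz-type inequality for the singular part of the Green's function (which, after the linear change of variables $y\mapsto T_{x_0}y$ given by \eqref{matrix T1}, reduces to the logarithmic kernel) with the boundedness \eqref{2-12} of $S_{K_H}$, to bound $E_\varepsilon(w_\varepsilon)$ above in terms of the diameter of $A_\varepsilon$ and its centre of mass $x_\varepsilon=d^{-1}\int xw_\varepsilon$. Comparing with the sharp lower bound produced by the test patch at $|x_0|=r_*$ then forces $|x_\varepsilon|\to r_*$ and $\mathrm{diam}(A_\varepsilon)\le r_2\varepsilon$; the lower bound $r_1\varepsilon\le\mathrm{diam}(A_\varepsilon)$ is immediate from the constraint $|A_\varepsilon|=d\varepsilon^2$, giving property (2).

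Finally, I reconstruct the 3D flow. Using the rotational invariance of the variational problem I may choose the maximizer so that $x_\varepsilon$ lies on the positive $x_1$-axis at the appropriate initial position of the helix \eqref{1007}. Setting $w(x',t)=w_\varepsilon(\bar R_{-\alpha|\ln\varepsilon|t}x')$ gives a solution of \eqref{vor str equ2} by \eqref{rot eq}, and the helical lift $\mathbf{w}_\varepsilon=(w/k)\overrightarrow{\zeta}$ with stream function $\varphi_\varepsilon=\mathcal{G}_{K_H}w$ produces a helical divergence-free velocity $\mathbf{v}_\varepsilon$ via \eqref{103}, together with a pressure $P_\varepsilon$ obtained from the helical Bernoulli relation; the boundary condition $\mathbf{v}_\varepsilon\cdot\mathbf n=0$ on $\partial B_{R^*}(0)\times\mathbb R$ is immediate from $\varphi_\varepsilon|_{\partial\Omega}=0$, giving property (3). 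To verify the filament-type convergence \eqref{1005}, one checks that for each $\tau$ and each $\phi\in C_c(D,\mathbb R^3)$, the rescaled integral $\int_D\mathbf{w}_\varepsilon(x,|\ln\varepsilon|^{-1}\tau)\cdot\phi(x)\,dx$ tends to $d\int_{\gamma(\tau)}\mathbf t_{\gamma(\tau)}\cdot\phi\,ds$ by splitting $D$ into helical tubes parametrised by $(x',x_3)$ with $x'\in\Omega$ and using the planar concentration of $w_\varepsilon$ at the rotated point $\bar R_{-\alpha|\ln\varepsilon|t}^{-1}x_\varepsilon$ which, under the rescaling $t=|\ln\varepsilon|^{-1}\tau$, traces out exactly the prescribed helix $\gamma(\tau)$.
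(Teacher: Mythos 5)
Your overall framework coincides with the paper's: the same functional $\mathcal{E}_\varepsilon=\mathcal{E}-\alpha\ln\frac{1}{\varepsilon}\mathcal{I}$ over the same class $\mathcal{M}_\varepsilon$, the bathtub principle giving the patch form and the multiplier $\mu^\varepsilon$, the test-patch lower bound $\frac d2 Y(z)\ln\frac1\varepsilon-C$ with $Y$ maximized at $|z|=r_*$ by the choice of $\alpha$, and the final helical lift with the check that the rotating concentration point traces \eqref{1007}. But the central localization step is where your argument has a genuine gap. You propose to bound $\mathcal{E}_\varepsilon(\omega_\varepsilon)$ from above ``in terms of the diameter of $A_\varepsilon$ and its centre of mass'' via Riesz rearrangement and \eqref{2-12}, and then conclude $|x_\varepsilon|\to r_*$ and $\mathrm{diam}(A_\varepsilon)\le r_2\varepsilon$ by comparison with the test-patch lower bound. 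The rearrangement upper bound, however, is insensitive to the diameter of the support: the rearranged profile always has diameter of order $\varepsilon$, so the upper bound you get is the same whether $A_\varepsilon$ is a single $\varepsilon$-blob or contains far-away wisps of mass $O(1/\ln\frac1\varepsilon)$, and pure energy comparison only controls the mass distribution up to such $O(1/\ln\frac1\varepsilon)$ errors. In particular it can neither exclude small pieces of $\mathrm{supp}(\omega_\varepsilon)$ located far from $\{|x|=r_*\}$ nor yield the sharp bound $\mathrm{diam}(\bar A_\varepsilon)\le r_2\varepsilon$ claimed in item (2). The paper closes this gap differently: it first proves $\int_\Omega\omega_\varepsilon(\psi^\varepsilon)_+dx=O(1)$ by a Sobolev/divergence-form estimate, hence a lower bound for $\mu^\varepsilon$ (Lemma \ref{lowbdd of mu}), and then exploits the pointwise variational inequality $\mathcal{G}_{K_H}\omega_\varepsilon(x_\varepsilon)-\frac{\alpha|x_\varepsilon|^2}{2}\ln\frac1\varepsilon\ge\mu^\varepsilon$ valid at \emph{every} point $x_\varepsilon$ of the support (Lemmas \ref{le6-1}--\ref{diam1}); this is what forces each support point to carry essentially the whole mass within distance $\varepsilon^\gamma$, and a bootstrap through the sharp expansion of $\mu^\varepsilon$ (Lemmas \ref{le10}, \ref{sharp}) upgrades $\varepsilon^\gamma$ to $R\varepsilon$. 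You derive $\mu^\varepsilon$ but never use this pointwise mechanism, so the concentration and diameter claims are unsupported as written.

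A smaller point: your assertion that ``bathtub-type arguments combined with continuity of $\mathcal{G}_{K_H}\omega_\varepsilon-\frac{\alpha|x|^2}{2}|\ln\varepsilon|$'' force the bang-bang form is not quite complete. The bathtub principle allows intermediate values of $\omega_\varepsilon$ on the level set $\{\mathcal{G}_{K_H}\omega_\varepsilon-\frac{\alpha|x|^2}{2}\ln\frac1\varepsilon=\mu^\varepsilon\}$, and continuity alone does not make that set negligible; the paper rules it out by computing $\mathcal{L}_{K_H}\bigl(\frac{\alpha|x|^2}{2}\ln\frac1\varepsilon\bigr)=-\frac{2k^4\alpha}{(k^2+|x|^2)^2}\ln\frac1\varepsilon<0$ on that set, contradicting $\omega_\varepsilon\ge0$, so that the set has measure zero. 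You need such an argument to obtain property (1) in the stated form.
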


Our strategy to show  Theorem \ref{thm01} is to construct a family of vortex patch solutions to  \eqref{rot eq2} with $ f_{\varepsilon}(t)=\frac{1}{\varepsilon^2}\textbf{1}_{\{t>\mu^\varepsilon\}} $ for some $ \mu^\varepsilon $.
To be more clear, let us introduce some definition first.
Let  $ \mathcal{E}(\omega) $ and $ \mathcal{I}(\omega) $ be the kinetic energy and the moment of inertia defined respectively by
\begin{equation*}
	\mathcal{E}(\omega)=\frac{1}{2}\int_{\Omega}\omega\mathcal{G}_{K_H}\omega dx,\ \ \ \
	\mathcal{I}(\omega)=\frac{1}{2}\int_{\Omega}|x|^2\omega  dx,
\end{equation*}
see \eqref{KE}, \eqref{MI} and \eqref{Energy}  in section 3 for more details.
 Consider the maximization of the following functional
\begin{equation*}
\begin{split}
\mathcal{E}_\varepsilon(\omega)=\mathcal{E}(\omega)-\alpha\ln\frac{1}{\varepsilon}\mathcal{I}(\omega)
=\frac{1}{2}\int_{\Omega}\omega\mathcal{G}_{K_H}\omega dx-\frac{\alpha }{2}\ln\frac{1}{\varepsilon}\int_{\Omega}|x|^2\omega  dx
\end{split}
\end{equation*}
over the constraint set
\begin{equation*}
\mathcal{M}_{\varepsilon}=\left \{\omega\in L^\infty(\Omega)~|~\int_{\Omega}\omega dx = d,\ \text{and} ~ 0\le \omega \le \frac{1}{\varepsilon^2} \right \}.
\end{equation*}
Here $ \alpha>0 $ is a constant to be determined properly. It is not hard to get the existence and profile of maximizers, which are vortex patch solutions to \eqref{rot eq2}, see Proposition \ref{exist of max}. Then we give asymptotic estimates of maximizers. It is worth noting that,  asymptotic estimates of maximizers rely on the structure of Green's function of the operator $ \mathcal{L}_{K_H} $, which is shown in Theorem  \ref{expe of G_K}.
%Thus using Theorem  \ref{expe of G_K}, we get the asymptotic expansion of the Green's  function $ G_{K_H}  $ of $ \mathcal{G}_{K_H} $ appearing in the construction of the stream function in \eqref{vor str equ2}.
Thus, using the characterization that a maximizer  has the largest energy and the theory of  rearrangement functions, we get the asymptotic estimates  of  solutions.
%Note that since eigenvalues of $ K_H $ are different, the competitors must be chosen appropriately, see Proposition \ref{lowbdd of E}. Finally based on the  classical estimates of capacity (see, e.g., \cite{BF80,DV, SV}), we prove the lower bound of $ c_\varepsilon $ and the limiting location  of the vortex core.
We observe that the limiting location of maximizers  is a maximum of $ Y $ defined by \eqref{def of Y}, which is $ (r_*,0) $ up to rotation. Finally,   we prove that maximizers correspond to a family of concentrated helical vortex patch vorticity fields to Euler equations \eqref{Euler eq2},  which tends asymptotically to a helix  \eqref{1007} in sense of \eqref{1005}.

\begin{remark}
The condition $  \varepsilon\in (0,\min\{1, \sqrt{|\Omega|/d}\}) $ follows from the fact that $ d=\int_\Omega \omega dx\leq \frac{|\Omega|}{\varepsilon^2} $ and $ \ln\frac{1}{\varepsilon}>0 $. So Theorem \ref{thm01} shows the existence of concentrated helical Euler flows  not only for small $ \varepsilon $, but for all $ \varepsilon<\min\{1, \sqrt{|\Omega|/d}\}. $

%It should be noted that, Euler equations with helical symmetry can be regarded as the general case of 2D and 3D axisymmetric Euler equations. $ k\to+\infty $ and $ k=0 $ correspond to the 2D Euler equations and 3D axisymmetric Euler equations, respectively.
Note that the elliptic operator in 2D Euler equation is $ -\Delta $, while in 3D axisymmetric case the operator is $ -\frac{1}{b(x)}\nabla\cdot(b(x)\nabla) $ with $ b(x)=r $, see \cite{DV,FB,SV} for example. In contrast to the 2D and 3D axisymmetric problem, the associated operator $ \mathcal{L}_{K_H} $ in vorticity equations \eqref{vor str eq} is an elliptic operator in divergence form, which brings essential difficulty in studying the existence and asymptotic behavior of solutions. It is hard to reduce the operator $ \mathcal{L}_{K_H} $ to the standard Laplacian  by means of a single change of coordinates. Moreover, since the eigenvalues of $ K_H $ are not the same, the limiting shape of support set of solutions is not a disc but an ellipse, which is different from the 2D and 3D axisymmetric cases.
\end{remark}

\iffalse

\begin{remark}
Results of Theorem \ref{thm01} generalize the classical cases of 2D vortex (see \cite{LYY,SV}) and the 3D vortex ring (see  \cite{DV}). Note that the 2D and 3D axisymmetric cases correspond to the coefficient matrix $ K_H(x)=Id $ and $ \frac{1}{x_1}Id $, respectively.  In \cite{DV}, by considering   solutions of
	\begin{equation*}
	\begin{cases}
	-\text{div}\left( \frac{1}{b}\nabla u\right) =\frac{1}{\varepsilon^2} b\left(u-q\ln\frac{1}{\varepsilon}\right)^{p-1}_+,\ \ &x\in \Omega,\\	 u=0,\ \ &x\in\partial \Omega,
	\end{cases}
	\end{equation*}
	where $ b $ is a scalar function, the authors constructed a family of   $ C^1 $ solutions $ u_\varepsilon $  with nonvanishing circulation  concentrating near a minimizer of $ q^2/b $ as $ \varepsilon\to 0 $. Indeed, if we choose $ K_H(x)=\frac{1}{b}Id $,   then by Theorem \ref{thm1} solutions will shrink to minimizers of $ q^2\sqrt{det(K_H)}=q^2/b $, which  coincides with the results in \cite{DV}.
\end{remark}

\fi

\begin{remark}
	Indeed, the existence of solutions of general elliptic  equations in divergence form has been studied by \cite{PS}, who considered a singularly perturbed elliptic problem:
	\begin{equation}\label{PS eq}
	\begin{split}
	-\varepsilon^2 \text{div}(K(x)\nabla u) +V(x)u=u^{p},\ \ \ \ x\in \mathbb{R}^n,
	\end{split}
	\end{equation}
	where $ K(x) $ is strictly positive definite, $ n\geq 3 $, $ p\in (1,\frac{n+2}{n-2}) $ and $ V\in C^{1}(\mathbb{R}^n) $ is positive. The authors constructed solutions concentrating near minimizers of $ V(x)^{\frac{p+1}{p-1}}\sqrt{det(K(x))} $ by the penalization technique. However, it seems that the method in \cite{PS} can not be used in our situation since it depends on the positiveness of $ V $.
	
\end{remark}

\begin{remark}
Let us point out that our method of proving  Theorem \ref{thm01} is different from that used in  \cite{ALW},\cite{DDMW2}, where existence of solutions was obtained by studying the equation satisfied by stream function by Lyapunov-Schmidt reduction. Instead, we study the equation satisfied by the vorticity, that is \eqref{rot eq} or \eqref{rot eq2}. The solutions obtained in this way have more information. They maximize
the energy, for instance. Such characteristics is very useful in establishing orbital stability, which will be given next. Additionally, by the nonlinearity we choose, we obtain the so-called vortex patches, which are of different type of solutions  from those obtained in \cite{DDMW2} as well.
\end{remark}

Before stating our third result concerning the nonlinear stability of concentrated vortex patches constructed in Theorem \ref{thm01}, let us define the set of maximizers
\begin{equation}\label{s}
\mathcal{S}_\varepsilon:=\{\omega\in \mathcal{M}_\varepsilon \mid \mathcal{E}_\varepsilon(\omega)=\sup_{\mathcal{M}_\varepsilon}\mathcal{E}_\varepsilon\}.
\end{equation}
According to Proposition \ref{exist of max}, $\mathcal{S}_\varepsilon$ is not empty, and any element in $\mathcal{S}_\varepsilon$ is a rotation-invariant vortex patch  to \eqref{vor str equ2}, which corresponds to a 3D Euler flow with helical symmetry.  Observe that by the rotational invariance of energy $ \mathcal{E}_\varepsilon $,  for any $ \omega\in \mathcal{S}_\varepsilon $, $ \bar{R}_{\theta}(\omega)\in \mathcal{S}_\varepsilon $ for any $ \theta\in\mathbb{R} $.
Thus  the maximizer set $ \mathcal{S}_\varepsilon $ is invariant in the sense that if $w_0\in\mathcal{S}_\varepsilon $, then $w_t\in\mathcal{S}_\varepsilon$ for all $t>0$, where $\omega_t$ is a weak solution to the vorticity equation \eqref{vor str equ2} with initial $w_0$.   A  nature question is the stability of the maximizers, namely for any given initial vorticity $w_0$ that is sufficiently close to the set of rotation-invariant vortex patch solutions $\mathcal{S}_\varepsilon$ in some norm, will it be close to $\mathcal{S}_\varepsilon$ for all $t>0$ in the same norm? If it is true, we say $\mathcal{S}_\varepsilon$ is nonlinear orbitally stable.

There are many articles showing the stability of 2D and 3D axisymmetric Euler flows in the past few decades, see \cite{B5,B6,Ta,WP} for example.
%The type of stability we consider here is nonlinear stability, which is usually a very difficult problem in hydrodynamics.
A very effective method to prove nonlinear stability for smooth planar Euler flows is established by Arnold \cite{A2}, which was later extended to non-smooth flows, for example, vortex patches, see \cite{B5,B6,WP} in 2D and 3D axisymmetric cases. As for stability of helical Euler flows, \cite{Ben} showed a stability result for stationary smooth helical Euler flows in the $ L^2 $ norm of the velocity and vorticity by using the direct method of Lyapunov. Combining  the characterization of energy maximizers and the conservation of the kinetic energy and the moment of inertia for solutions to \eqref{vor str equ2}, we get the orbital stability to $\mathcal{S}_\varepsilon $ in the $ L^p $ sense for any $ p\geq 2 $.

\begin{theorem}\label{os}
$\mathrm{[Orbital~stability]}$ Let $2\leq p<+\infty$, $\varepsilon \in(0,\min\{1, \sqrt{|\Omega|/d}\} )$, and $\mathcal{S}_\varepsilon$ be defined by \eqref{s}. Then $\mathcal{S}_\varepsilon$ is orbitally stable in $L^p$ norm, or equivalently, for any $\rho>0$, there exists a $\delta>0$, such that for any $\omega_0\in L^p(\Omega)$ satisfying
\begin{equation*}
\inf_{\omega\in\mathcal{S}_\varepsilon}\|\omega_0-\omega\|_{L^p(\Omega)}<\delta,
\end{equation*}	
we have
\begin{equation*}
\inf_{\omega\in\mathcal{S}_\varepsilon}\|\omega_t-\omega\|_{L^p(\Omega)}<\rho
\end{equation*}
for all $t>0$, where $\omega_t$ is a weak solution to the vorticity equation \eqref{vor str equ2} with initial vorticity $\omega_0$.
\end{theorem}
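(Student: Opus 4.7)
The plan is to run a concentration--compactness argument by contradiction, in the spirit of Burton's treatment of planar vortex patches (later adapted to 3D axisymmetric flows by Wang--Peng). Suppose stability fails: select $\rho>0$, initial data $\omega_0^n\in L^p(\Omega)$ with $\inf_{\omega\in\mathcal S_\varepsilon}\|\omega_0^n-\omega\|_{L^p}\to 0$, times $t_n>0$, and maximizers $\tilde\omega_n\in\mathcal S_\varepsilon$ with $\|\omega_0^n-\tilde\omega_n\|_{L^p}\to 0$, while the weak solutions $\omega_t^n$ of \eqref{vor str equ2} satisfy $\inf_{\omega\in\mathcal S_\varepsilon}\|\omega_{t_n}^n-\omega\|_{L^p}\ge\rho$. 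The essential invariants along the flow are: the rearrangement class $\omega_t^n\in\mathbf R_{\omega_0^n}$ (Theorem \ref{A}, using $p\ge 2$), the kinetic energy $\mathcal E$, and the moment of inertia $\mathcal I$, the last of these coming from the $SO(2)$-symmetry of $K_H$ and of $\Omega=B_{R^*}(0)$. These give $\|\omega_{t_n}^n\|_{L^p}=\|\omega_0^n\|_{L^p}$ and $\mathcal E_\varepsilon(\omega_{t_n}^n)=\mathcal E_\varepsilon(\omega_0^n)$.

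Next I would show that $\mathcal E_\varepsilon$ is continuous on $L^p$-bounded sets. Elliptic regularity (Proposition \ref{exist of G_K}) gives $\mathcal G_{K_H}\colon L^p\to W^{2,p}\cap W^{1,p}_0$, and the two-dimensional Sobolev embedding makes $\mathcal G_{K_H}$ compact into $C(\overline\Omega)$; combined with $|x|^2\in L^\infty(\Omega)$, the bilinear/linear identity $\mathcal E_\varepsilon(\omega_0^n)-\mathcal E_\varepsilon(\tilde\omega_n)=\tfrac12\int(\omega_0^n-\tilde\omega_n)\mathcal G_{K_H}(\omega_0^n+\tilde\omega_n)-\tfrac{\alpha|\ln\varepsilon|}{2}\int|x|^2(\omega_0^n-\tilde\omega_n)$ shows $\mathcal E_\varepsilon(\omega_0^n)\to\mathcal E_\varepsilon(\tilde\omega_n)=\sup_{\mathcal M_\varepsilon}\mathcal E_\varepsilon$, so by conservation $\mathcal E_\varepsilon(\omega_{t_n}^n)\to\sup_{\mathcal M_\varepsilon}\mathcal E_\varepsilon$. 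Since $\|\omega_{t_n}^n\|_{L^p}$ is bounded, extract a weakly convergent subsequence $\omega_{t_n}^n\rightharpoonup\omega^*$ in $L^p$; compactness of $\mathcal G_{K_H}$ makes both $\mathcal E$ and $\mathcal I$ sequentially weakly continuous along this sequence, yielding $\mathcal E_\varepsilon(\omega^*)=\sup_{\mathcal M_\varepsilon}\mathcal E_\varepsilon$.

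The main obstacle is to confirm that $\omega^*$ actually belongs to $\mathcal S_\varepsilon$ and then to promote weak convergence to strong $L^p$-convergence, because the perturbation $\omega_0^n$ need not lie in $\mathcal M_\varepsilon$ and so the pointwise constraints $0\le\omega_{t_n}^n\le 1/\varepsilon^2$ can be violated. To overcome this, I would exploit that for every constant $c$ the quantity $\|(v-c)_+\|_{L^p}$ depends only on the distribution function of $v$ (via the layer-cake formula) and is therefore rearrangement-invariant. Since $0\le\tilde\omega_n\le 1/\varepsilon^2$, the trivial bounds $\|(\omega_0^n-1/\varepsilon^2)_+\|_{L^p}\le\|\omega_0^n-\tilde\omega_n\|_{L^p}$ and $\|(-\omega_0^n)_+\|_{L^p}\le\|\omega_0^n-\tilde\omega_n\|_{L^p}$ transfer to $\omega_{t_n}^n$ via rearrangement invariance, giving $\|(\omega_{t_n}^n-1/\varepsilon^2)_+\|_{L^p}\to 0$ and $\|(-\omega_{t_n}^n)_+\|_{L^p}\to 0$. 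Splitting $\omega_{t_n}^n=\min\{\omega_{t_n}^n,1/\varepsilon^2\}+(\omega_{t_n}^n-1/\varepsilon^2)_+$ and passing to the weak limit yields $\omega^*\le 1/\varepsilon^2$ a.e.; symmetric manipulation gives $\omega^*\ge 0$; together with $\int\omega^*=\lim\int\omega_{t_n}^n=\lim\int\omega_0^n=d$, this forces $\omega^*\in\mathcal M_\varepsilon$, and hence $\omega^*\in\mathcal S_\varepsilon$. Finally, every element of $\mathcal S_\varepsilon$ is by Proposition \ref{exist of max} a patch $\varepsilon^{-2}\mathbf 1_A$ with $|A|=d\varepsilon^2$, so all members of $\mathcal S_\varepsilon$ share the common $L^p$-norm $d^{1/p}\varepsilon^{2/p-2}$. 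Therefore $\|\omega_{t_n}^n\|_{L^p}=\|\omega_0^n\|_{L^p}\to\|\tilde\omega_n\|_{L^p}=\|\omega^*\|_{L^p}$, and by uniform convexity of $L^p$ for $p\in[2,\infty)$, weak convergence plus norm convergence upgrades to strong convergence $\omega_{t_n}^n\to\omega^*\in\mathcal S_\varepsilon$ in $L^p$, contradicting the standing lower bound $\|\omega_{t_n}^n-\omega^*\|_{L^p}\ge\rho$.
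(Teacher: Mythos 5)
Your argument is correct, and it reaches the conclusion by a genuinely different route than the paper. The paper also argues by contradiction and uses the same conserved quantities ($\mathcal{E}$, $\mathcal{I}$, and the rearrangement class from Theorem \ref{A}), but it never analyzes the perturbed solution $v^n_{t_n}$ directly: instead it picks $w^n_0\in\mathcal{S}_\varepsilon$ close to $v^n_0$, solves the \emph{linear} transport equation driven by the perturbed velocity $\nabla^\perp\mathcal{G}_{K_H}v^n_t$ with data $w^n_0$ (Lemma \ref{dl}), so that $w^n(\cdot,t)$ stays in $\mathcal{M}_\varepsilon$ and $\|w^n(\cdot,t_n)-v^n_{t_n}\|_{L^p}=\|w^n_0-v^n_0\|_{L^p}\to0$, and then applies the compactness Lemma \ref{com} (strong $L^p$ convergence of maximizing sequences \emph{inside} $\mathcal{M}_\varepsilon$, proved via the common $L^2$ norm of patches plus uniform convexity) to $w^n(\cdot,t_n)$. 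You instead prove, in effect, a strengthened version of Lemma \ref{com} valid for approximate members of $\mathcal{M}_\varepsilon$: you observe that the constraint violations $\|(\,\cdot-\varepsilon^{-2})_+\|_{L^p}$ and $\|(\,\cdot\,)_-\|_{L^p}$ are rearrangement invariants, hence vanish along $\omega^n_{t_n}$, identify the weak limit as an element of $\mathcal{S}_\varepsilon$ through the weak continuity of $\mathcal{E}_\varepsilon$ (compactness of $\mathcal{G}_{K_H}:L^p\to C(\overline\Omega)$), and upgrade to strong convergence using the fact that all elements of $\mathcal{S}_\varepsilon$ share the norm $d^{1/p}\varepsilon^{2/p-2}$ together with uniform convexity. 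The paper's detour through the linear transport comparison keeps all compactness analysis cleanly confined to the constraint set at the price of invoking the transport/renormalization machinery of Lemma \ref{dl}; your version dispenses with that auxiliary construction and relies only on Theorem \ref{A}(ii) and the conservation laws, at the price of the extra rearrangement-invariance bookkeeping and of redoing the compactness step in $L^p$ rather than quoting Lemma \ref{com}. Both proofs ultimately rest on the same rigidity: every maximizer is a patch of fixed measure, hence of fixed norm.
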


\begin{remark}
In \cite{Ben}, under the assumption that
\begin{equation}\label{Ben cond}
0\leq -\frac{\nabla \mathcal{G}_{K_H}w(x)}{\nabla w(x)}\leq C, \ \ \forall\ x\in\Omega,
\end{equation}
the author obtained the nonlinear stability of smooth steady solutions to vorticity equation \eqref{vor str equ2} by constructing a certain first integral. However, for  many weak solutions to \eqref{vor str equ2} like vortex patches, \eqref{Ben cond} does not hold. In contrast to \cite{Ben}, we use the  characterization of energy maximizers to get the orbital stability of vortex patches  constructed in Theorem \ref{thm01}, which is not known before. Another interesting problem is  whether these solutions are stable. To the best of our knowledge, it is still unknown.
\end{remark}

The paper is organized as follows. In section 2, we give the expansion of Green's  function $ G_K $ of the Dirichlet problem \eqref{diver form} by a constructive way. In section 3, we prove the existence and profile of maximizers of $ \mathcal{E}_\varepsilon $ in the set $ \mathcal{M}_\varepsilon $, which corresponds to a rotation-invariant vortex patch to the vorticity equation \eqref{vor str equ2}. In section 4 and section 5, we prove the asymptotic behavior of maximizers and finish the proof of Theorem \ref{thm01}. By showing a compactness result and conservation of the energy and the moment of inertia, we prove the orbital stability of solutions constructed in Theorem \ref{thm01}  in section 6.

\section{Construction of Green's function}
In this section, we prove that Green's function of the linear problem \eqref{diver form} has a structure shown in Theorem \ref{expe of G_K}.
%Since the stream function $ \varphi $ and vorticity $ \omega $ satisfy an elliptic equation in divergence form (see \eqref{vor str eq}), we first study the
%Consider the following Dirichlet problem:
%\begin{equation*}
%\begin{cases}
%-\text{div}(K(x)\nabla u)= f,\ \ &x\in U,\\
%u=0,\ \ &x\in\partial U,
%\end{cases}
%\end{equation*}
%where $ U\subset \mathbb{R}^2 $ is a simply-connected smooth bounded domain, the coefficient matrix $ K $ satisfies  $(\mathcal{K}1)-(\mathcal{K}2)$.
First by the  $ L^p $ theory for elliptic equations, one has the classical result:
\begin{proposition}\label{exist of G_K}
For every $ p\in(1,+\infty) $, there exists a linear continuous operator
$ \mathcal{G}_K: L^p(U) \to W^{2,p}(U) $ such that for every $ f\in L^p(U) $, the function $ u=\mathcal{G}_Kf $ is a weak solution of the problem \eqref{diver form}.
\end{proposition}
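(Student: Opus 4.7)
The plan is to build $\mathcal{G}_K$ in three steps and then read off linearity and continuity from the construction.

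\emph{Step 1 (Hilbert solvability).} For $f\in L^2(U)$, I would apply the Lax-Milgram theorem to the bilinear form $a(u,v)=\int_U (K(x)\nabla u)\cdot\nabla v\,dx$ on $H^1_0(U)$. Boundedness of $a$ follows from $(\mathcal{K}1)$ and Cauchy-Schwarz, while coercivity follows from the uniform lower bound $(K\zeta|\zeta)\ge \Lambda_1|\zeta|^2$ in $(\mathcal{K}2)$ combined with the Poincaré inequality on the bounded domain $U$. This yields a unique weak solution $u\in H^1_0(U)$ with $\|u\|_{H^1_0(U)}\le C\|f\|_{L^2(U)}$.

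\emph{Step 2 (Upgrade to $W^{2,p}$).} Since $(\mathcal{K}1)$ makes $K$ and its partials smooth up to $\partial U$, I would rewrite the divergence-form equation as a non-divergence-form equation
$$-K_{ij}(x)\partial_{ij}u-(\partial_i K_{ij})(x)\partial_j u=f,$$
with bounded smooth lower-order coefficients. Because $\partial U$ is $C^\infty$ and the leading part is uniformly elliptic, the Calderón-Zygmund / Agmon-Douglis-Nirenberg $L^p$ theory (e.g.\ Theorem~9.15 in Gilbarg-Trudinger) provides, for every $p\in(1,+\infty)$ and every $f\in L^p(U)$, a unique $u\in W^{2,p}(U)\cap W^{1,p}_0(U)$ solving the equation, with the global estimate $\|u\|_{W^{2,p}(U)}\le C_p\|f\|_{L^p(U)}$. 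For $p\ge 2$ the Hilbert solution from Step~1 plays the role of $u$; for $1<p<2$ one either applies the same theorem directly (it is stated for all $p\in(1,\infty)$) or uses approximation of $f$ in $L^p$ by $L^2$ data together with the a priori estimate. The estimate is clean (without an auxiliary $\|u\|_{L^p}$ term) after absorbing lower-order contributions via the $H^1_0$ bound of Step~1 and Sobolev embedding, or equivalently by the maximum principle / Stampacchia's $L^\infty$ bound when $f$ is smooth and then density.

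\emph{Step 3 (Define the operator).} Set $\mathcal{G}_K f:=u$. Linearity is immediate by uniqueness: if $u_i$ is the solution with datum $f_i$, then $\lambda_1 u_1+\lambda_2 u_2$ satisfies the equation with datum $\lambda_1 f_1+\lambda_2 f_2$ and lies in the same solution class, so it must equal $\mathcal{G}_K(\lambda_1 f_1+\lambda_2 f_2)$. Continuity of $\mathcal{G}_K:L^p(U)\to W^{2,p}(U)$ is exactly the estimate produced in Step~2.

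The only real obstacle is a bookkeeping one: ensuring that the cited $L^p$ regularity applies in the full range $p\in(1,\infty)$, rather than merely in the Hilbert range. This is why the smoothness hypotheses in $(\mathcal{K}1)$ and the $C^\infty$ regularity of $\partial U$ are used in an essential way; no genuinely new analysis is required.
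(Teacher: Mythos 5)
Your proposal is correct and follows essentially the same route as the paper: the paper's entire proof is a citation of Theorem~9.15 in Gilbarg--Trudinger, which is exactly the $L^p$ theory you invoke in Step~2 (after the harmless rewriting in non-divergence form, legitimate here since $K$ is smooth and the zeroth-order coefficient vanishes). Your Steps~1 and~3, and the removal of the $\|u\|_{L^p}$ term from the a priori estimate (cf.\ Lemma~9.17 in the same reference), are routine details the paper leaves implicit.
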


\begin{proof}
	See theorem 9.15 in  \cite{GT}, for example.
\end{proof}

\begin{proof}[Proof of Theorem \ref{expe of G_K}]
In the sequel, we give proof of Theorem \ref{expe of G_K}.  The idea is to represent the Green's  function of the Dirichlet
problem \eqref{diver form} as a combination of the fundamental solution of the classical Laplacian in $ \mathbb{R}^2 $ and some positive-definite matrices and functions on the domain $ U $.

We first determine $ S_K $. Define
	\begin{equation*}
	G_0(x,y):=\frac{\sqrt{\det K(x)}^{-1}+\sqrt{\det K(y)}^{-1}}{2}\Gamma\left (\frac{T_x+T_y}{2}(x-y)\right ).
	\end{equation*}
Then for fixed $ y\in U $,
	\begin{equation*}
	\begin{split}
	-\nabla_x\cdot(K(x)\nabla_x G_0(x,y))=&-\partial_{x_1}(K_{11}(x)\partial_{x_1}G_0(x,y)+K_{12}(x)\partial_{x_2}G_0(x,y))\\
	&-\partial_{x_2}(K_{21}(x)\partial_{x_1}G_0(x,y)+K_{22}(x)\partial_{x_2}G_0(x,y))\\
	 =&-K_{11}(x)\partial_{x_1x_1}G_0(x,y)-K_{12}(x)\partial_{x_1x_2}G_0(x,y)\\&-K_{21}(x)\partial_{x_1x_2}G_0(x,y)-K_{22}(x)\partial_{x_2x_2}G_0(x,y)\\&+F_1(x,y),
	\end{split}
	\end{equation*}
where $F_1(\cdot,y)\in L^q(U)$ for $1<q<2$ since $|\nabla\Gamma(x-y)|\leq \frac{1}{2\pi|x-y|} $.

 Denote $  z=\frac{T_x+T_y}{2}(x-y)  $ and $ T_x=\begin{pmatrix}
T_{11}(x) & T_{12}(x) \\
T_{21}(x) &  T_{22}(x)
\end{pmatrix}. $
One computes directly that
	\begin{equation}\label{a2}
	\begin{split}
-\nabla&_x	\cdot(K(x)\nabla_x G_0(x,y))\\
	=&-\frac{\sqrt{\det K(x)}^{-1}+\sqrt{\det K(y)}^{-1}}{2}\bigg(K_{11}(x)\left (\frac{T_{11}(x)+T_{11}(y)}{2}\partial_{z_1}+\frac{T_{21}(x)+T_{21}(y)}{2}\partial_{z_2}\right )^2\\
	&+K_{12}(x)\left (\frac{T_{11}(x)+T_{11}(y)}{2}\partial_{z_1}+\frac{T_{21}(x)+T_{21}(y)}{2}\partial_{z_2}\right )\left (\frac{T_{12}(x)+T_{12}(y)}{2}\partial_{z_1}+\frac{T_{22}(x)+T_{22}(y)}{2}\partial_{z_2}\right )\\
	&+K_{21}(x)\left (\frac{T_{11}(x)+T_{11}(y)}{2}\partial_{z_1}+\frac{T_{21}(x)+T_{21}(y)}{2}\partial_{z_2}\right )\left (\frac{T_{12}(x)+T_{12}(y)}{2}\partial_{z_1}+\frac{T_{22}(x)+T_{22}(y)}{2}\partial_{z_2}\right )\\
	&+K_{22}(x)\left (\frac{T_{12}(x)+T_{12}(y)}{2}\partial_{z_1}+\frac{T_{22}(x)+T_{22}(y)}{2}\partial_{z_2}\right )^2\bigg)\Gamma(z)+F_2(x,y)
	\end{split}
	\end{equation}
for some $F_2(\cdot,y)\in L^q(U)(1<q<2) $. Define
\begin{equation*}
\begin{split}
c_{11}(x,y)=&K_{11}(x)\left (\frac{T_{11}(x)+T_{11}(y)}{2}\right)^2+ 2K_{12}(x)\left (\frac{T_{11}(x)+T_{11}(y)}{2}\right)\left (\frac{T_{12}(x)+T_{12}(y)}{2}\right)\\
&+K_{22}(x)\left (\frac{T_{12}(x)+T_{12}(y)}{2}\right)^2,
\end{split}
\end{equation*}
\begin{equation*}
\begin{split}
c_{12}&(x,y)\\
=&2K_{11}(x)\left (\frac{T_{11}(x)+T_{11}(y)}{2}\right)\left (\frac{T_{21}(x)+T_{21}(y)}{2}\right)+ 2K_{12}(x)\left (\frac{T_{11}(x)+T_{11}(y)}{2}\right)\left (\frac{T_{22}(x)+T_{22}(y)}{2}\right)\\
&+ 2K_{12}(x)\left (\frac{T_{21}(x)+T_{21}(y)}{2}\right)\left (\frac{T_{12}(x)+T_{12}(y)}{2}\right)+2K_{22}(x)\left (\frac{T_{12}(x)+T_{12}(y)}{2}\right)\left (\frac{T_{22}(x)+T_{22}(y)}{2}\right),
\end{split}
\end{equation*}
\begin{equation*}
\begin{split}
c_{22}(x,y)=&K_{11}(x)\left (\frac{T_{21}(x)+T_{21}(y)}{2}\right)^2+ 2K_{12}(x)\left (\frac{T_{21}(x)+T_{21}(y)}{2}\right)\left (\frac{T_{22}(x)+T_{22}(y)}{2}\right)\\
&+K_{22}(x)\left (\frac{T_{22}(x)+T_{22}(y)}{2}\right)^2,
\end{split}
\end{equation*}
then
\begin{equation*}
\begin{split}
-&\nabla_x	\cdot(K(x)\nabla_x G_0(x,y))\\
&=-\frac{\sqrt{\det K(x)}^{-1}+\sqrt{\det K(y)}^{-1}}{2}(C_{11}(x,y)\partial_{z_1z_1}+C_{12}(x,y)\partial_{z_1z_2}+C_{22}(x,y)\partial_{z_2z_2})\Gamma(z)+F_2(x,y).
\end{split}
\end{equation*}
When $ x=y $, direct computation shows that
	\begin{equation}\label{a3}
	\begin{split}
	C_{12}(y,y)
	=&2(K_{11}T_{11}T_{21}+K_{12}T_{11}T_{22}+K_{12}T_{12}T_{21}+K_{22}T_{12}T_{22})(y)\\
	=&2((T^{-1}_{11}T^{-1}_{11}+T^{-1}_{12}T^{-1}_{12})T_{11}T_{21}+(T^{-1}_{11}T^{-1}_{21}+T^{-1}_{12}T^{-1}_{22})T_{11}T_{22}\\
	&+(T^{-1}_{11}T^{-1}_{21}+T^{-1}_{12}T^{-1}_{22})T_{12}T_{21}+(T^{-1}_{21}T^{-1}_{21}+T^{-1}_{22}T^{-1}_{22})T_{12}T_{22})(y)\\
	=&2((T_{11}T^{-1}_{11}+T_{12}T^{-1}_{21})T^{-1}_{11}T_{21}+(T_{21}T^{-1}_{12}+T_{22}T^{-1}_{22})T_{12}^{-1}T_{11}\\
	&+(T_{11}T_{11}^{-1}+T_{12}T_{21}^{-1})T_{21}^{-1}T_{22}+(T_{21}T_{12}^{-1}+T_{22}T_{22}^{-1})T_{22}^{-1}T_{12})(y)\\
	=&2((T^{-1}_{11}T_{21}+T_{21}^{-1}T_{22})+(T_{12}^{-1}T_{11}+T_{22}^{-1}T_{12}))(y)\\
	=&0.
	\end{split}
	\end{equation}
	Similarly,
	\begin{equation}\label{a4}
	C_{11}(y,y)=C_{22}(y,y)=1.
	\end{equation}
Thus by \eqref{a3}, \eqref{a4} and the regularity of $ T $, one has
	\begin{equation}\label{a5}
	C_{12}(x,y)=O(|x-y|),\ \ C_{ii}(x,y)-1=O(|x-y|),\ \ i=1,2.
	\end{equation}
Substituting \eqref{a5} into \eqref{a2}, we conclude that	
	\begin{equation}\label{2-14}
	\begin{split}
	-\nabla_x\cdot(K(x)\nabla_x G_0(x,y))
	=&-\frac{\sqrt{\det K(x)}^{-1}+\sqrt{\det K(y)}^{-1}}{2}\Delta_z\Gamma(z)+F(x,y),
	\end{split}
	\end{equation}
for some $F(\cdot,y)\in L^q(U)(1<q<2)$, which  implies that $ -\nabla_x\cdot(K(\cdot)\nabla_x G_0(\cdot,y))=F(\cdot,y) $ in any subdomain of $ U\setminus \{y\}. $

For fixed $y\in U$, let $S_K(\cdot,y)\in W^{1,2}(U)$ be the unique weak solution to the following Dirichlet problem
	\begin{align}\label{2-15}
	\left\{
	\begin{aligned}
	&-\nabla_x\cdot(K(x)\nabla_x S_K(x,y))=-F(x,y)\ \ \ \text{in}\ U,\\
	&S_K(x,y)=-G_0(x,y)\ \ \ \ \ \ \ \ \ \ \ \ \ \ \ \ \ \ \ \ \ \ \ \ \ \text{on}\ \partial U.\\
	\end{aligned}
	\right.
	\end{align}
Since $ K $ is smooth and positive definite, by classical elliptic regularity estimates (see \cite{GT}),   we have $ S_K(\cdot, y)\in W^{2,q}(U)  $ for every $ 1<q<2 $.
%and
%\begin{equation*}
%||S_K(\cdot, y)||_{W^{2,q}(U)} \leq C(||F(\cdot, y)||_{L^q(U)}+ ||G_0(\cdot,y)||_{L^\infty(\partial U)}).
%\end{equation*}
By Sobolev embedding theorem,  $ S_K(\cdot,y)\in C^{0,\gamma}(\overline{U}) $ for every $ \gamma\in(0,1) $.

Moreover, from the definition of $ F  $ and $ G_0 $, we have
\begin{equation*}
||F(\cdot, y)||_{L^q(U)}\leq \bar{C}_1\left (\int_{B_{diam(U)}(0)}\left (\ln\frac{1}{|z|}\right )^{q}+\left (\frac{1}{|z|}\right )^qdz\right )^{\frac{1}{q}}\leq \bar{C}_2\ \ \ \ \ \forall\ y\in U,
\end{equation*}
and
\begin{equation*}
-G_0(x,y)\leq \bar{C}_3\ln|x-y|\leq \bar{C}_4 \ \ \ \ \ \forall\ x,y \in U,
\end{equation*}
for some $ \bar{C}_i>0 (i=1,2,3,4)$. Thus by \eqref{2-15}, we have \eqref{2-12}.

Now  we prove \eqref{2-11}. We first prove that for any $u\in W^{2,\bar{p}}\cap W^{1,\bar{p}}_0(U) $ with $\bar{p}>2$,
	\begin{equation}\label{2-13}
\begin{split}
u(y)=\int_{U}G_K(x,y)\mathcal{L}_Ku(x)dx\ \ \ \ \forall\ y\in U.
\end{split}
\end{equation}
Fix any point $p\in U$. Let  $ \bar{\epsilon}>0 $ sufficiently small such that $ B_{\bar{\epsilon}}(p)\Subset U$. Then by \eqref{2-14} and \eqref{2-15}, we have $ G_K(\cdot,p)\in W^{2,q}(U\setminus B_{\bar{\epsilon}}(p))$ and
	\begin{align*}
	\left\{
	\begin{aligned}
	&-\nabla_x\cdot(K(x)\nabla_x G_K(x,p))=0\ \ \ \text{in}\ U\setminus B_{\bar{\epsilon}}(p),\\
	&G_K(x,p)=0\ \ \ \ \ \ \ \ \ \ \ \ \ \ \ \ \ \ \ \ \ \ \ \ \ \ \text{on}\ \partial U.\\
	\end{aligned}
	\right.
	\end{align*}
	Let
	\begin{align*}
	\left\{
	\begin{aligned}
	&x'=T_px,\ p'=T_pp,\\
	&\tilde{G}_K(x',p')=G_K(T^{-1}_px',T^{-1}_pp'),\\
	&U_p=\left\{x'=T_px\mid x\in U\right\}.
	\end{aligned}
	\right.
	\end{align*}
	Direct computation shows that
	\begin{align}\label{a6}\
	\left\{
	\begin{aligned}
	&\tilde{L}\tilde{G}_K=-\nabla_{x'}\cdot(\tilde{K}(x')\nabla_{x'} \tilde{G}_K(x',p'))=0\ \ \ \ \text{in}\ U_p\setminus B_{\bar{\epsilon}}(p'),\\
	&\tilde{G}_K(x',p')=0\ \ \ \ \ \ \ \ \ \ \ \ \ \ \ \ \ \ \ \ \ \ \ \ \ \ \ \ \ \ \ \ \ \ \ \ \ \ \text{on}\ \partial U_p,\\
	\end{aligned}
	\right.
	\end{align}
	where
	\begin{align*}
	&\tilde{K}_{11}(x')=K_{11}(x)T_{11}^2(p)+2K_{12}(x)T_{11}(p)T_{12}(p)+K_{22}(x)T_{12}^2(p),\\
	&\tilde{K}_{12}(x')=K_{11}(x)T_{11}(p)T_{21}(p)+K_{12}(x)T_{11}(p)T_{22}(p)+K_{12}(x)T_{12}(p)T_{21}(p)+K_{22}(x)T_{12}(p)T_{22}(p),\\
	&\tilde{K}_{21}(x')=K_{11}(x)T_{11}(p)T_{21}(p)+K_{12}(x)T_{11}(p)T_{22}(p)+K_{12}(x)T_{12}(p)T_{21}(p)+K_{22}(x)T_{12}(p)T_{22}(p),\\
	&\tilde{K}_{22}(x')=K_{11}(x)T_{21}^2(p)+2K_{12}(x)T_{21}(p)T_{22}(p)+K_{22}(x)T_{22}^2(p).
	\end{align*}

Let $v(x')=u(T^{-1}_px')$. We denote by $q\in (1,2)$ the conjugate exponent of $\bar{p}$. By Green's formula and \eqref{a6}, we have
	\begin{equation}\label{b0}
	\begin{split}
	\int_{U_p\setminus B_{\bar{\epsilon}}(p')}\tilde{G}_K(x',p')\tilde{L}v(x')dx'
	=&\int_{\partial B_{\bar{\epsilon}}(p')}v(x')\left (\tilde{K}(x')\nabla_{x'}\tilde{G}_K(x',p')|\frac{p'-x'}{{\bar{\epsilon}}}\right )dS_{x'}\\
	&-\int_{\partial B_{\bar{\epsilon}}(p')}\tilde{G}_K(x',p')\left (\tilde{K}(x')\nabla_{x'}v(x')|\frac{p'-x'}{{\bar{\epsilon}}}\right )dS_{x'}\\
	=:&I_1-I_2.
	\end{split}
	\end{equation}
Denote
\begin{equation*}
\tilde{G}_0(x',p')=G_0(T_p^{-1}x', T_p^{-1}p'), \ \ \ \ \tilde{S}_K(x',p')=S_K(T_p^{-1}x', T_p^{-1}p').
\end{equation*}	
Then $ \tilde{G}_K(\cdot,p')=\tilde{G}_0(\cdot,p')+\tilde{S}_K(\cdot,p') $ in $ U_p $.	
	
For $I_2$, from the definition of $\tilde{G}_K$ and the fact that $\nabla v\in W^{1,\bar{p}}(U_p)\subset C(\overline{U_p})$, $\tilde{S}_K(\cdot,p')\in W^{2,q}(U_p)\subset C(\overline{U_p})$, we get
	\begin{equation}\label{b1}
	\begin{split}
	I_2=&\int_{\partial B_{\bar{\epsilon}}(p')}\tilde{G}_0(x',p')\left (\tilde{K}(x')\nabla_{x'}v(x')|\frac{p'-x'}{{\bar{\epsilon}}}\right )dS_{x'}\\
	&+\int_{\partial B_{\bar{\epsilon}}(p')}\tilde{S}_K(x',p')\left (\tilde{K}(x')\nabla_{x'}v(x')|\frac{p'-x'}{{\bar{\epsilon}}}\right )dS_{x'}\\
	=&O({\bar{\epsilon}}|\ln{\bar{\epsilon}}|+{\bar{\epsilon}}).
	\end{split}
	\end{equation}
For $I_1$, notice that
	\begin{equation}\label{b2}
	\begin{split}
	I_1=&\int_{\partial B_{\bar{\epsilon}}(p')}v(x')\left (\tilde{K}(x')\nabla_{x'}\tilde{G}_0(x',p')|\frac{p'-x'}{{\bar{\epsilon}}}\right )dS_{x'}\\
	&\int_{\partial B_{\bar{\epsilon}}(p')}v(x')\left (\tilde{K}(x')\nabla_{x'}\tilde{S}_K(x',p')|\frac{p'-x'}{{\bar{\epsilon}}}\right )dS_{x'}\\
	=:&J_1+J_2.
	\end{split}
	\end{equation}
On the one hand, by the trace theorem
	\begin{equation}\label{b3}
	\begin{split}
	|J_2|\leq&\int_{\partial B_{\bar{\epsilon}}(p')}|v(x')||\tilde{K}(x')\nabla_{x'}\tilde{S}_K(x',p')|dS_{x'}\\
	\leq& C\int_{\partial B_{\bar{\epsilon}}(p')}|\partial_{x_1'}\tilde{S}_K(x',p')|+|\partial_{x_2'}\tilde{S}_K(x',p')|dS_{x'}\\
	=&C\int_{\partial B_1(0)}|\partial_{z_1}\tilde{S}_K({\bar{\epsilon}} z+p,p')|+|\partial_{z_2}\tilde{S}_K({\bar{\epsilon}} z+p,p')|dS_{z}\\
	\leq& C\bigg(\int_{B_1(0)}|\partial_{z_1}\tilde{S}_K({\bar{\epsilon}} z+p,p')|+|\partial_{z_2}\tilde{S}_K({\bar{\epsilon}} z+p,p')|dz\\
	&+\int_{B_1(0)}|\partial_{z_1z_1}\tilde{S}_K({\bar{\epsilon}} z+p,p')|+|\partial_{z_1z_2}\tilde{S}_K({\bar{\epsilon}} z+p,p')|+|\partial_{z_2z_2}\tilde{S}_K({\bar{\epsilon}} z+p,p')|dz\bigg)\\
	=&C\bigg(\frac{1}{{\bar{\epsilon}}}\int_{B_{\bar{\epsilon}}(p')}|\partial_{x_1'}\tilde{S}_K(x',p')|+|\partial_{x_2'}\tilde{S}_K(x',p')|dx'\\
	 &+\int_{B_{\bar{\epsilon}}(p')}|\partial_{x_1'x_1'}\tilde{S}_K(x',p')|+|\partial_{x_1'x_2'}\tilde{S}_K(x',p')|+|\partial_{x_2'x_2'}\tilde{S}_K(x',p')|dx'\bigg)\\
\to	& 0 \ \ \ \ \text{as}\ \ {\bar{\epsilon}}\to0,
	\end{split}
	\end{equation}
where we have used $ \tilde{S}_K(\cdot,p') \in W^{2,q}(U_p)$ for $ q\in(1,2). $
On the other hand,
	\begin{equation}\label{b4}
	\begin{split}
	J_1&=\int_{\partial B_{\bar{\epsilon}}(p')}v(x')\frac{\sqrt{\det K(T_p^{-1}x')}^{-1}+\sqrt{\det K(T_p^{-1}p')}^{-1}}{2}\left( \tilde{K}(x')\nabla_{x'}\Gamma(x'-p')|\frac{p'-x'}{{\bar{\epsilon}}}\right) dS_{x'}+o(1)\\
	&=\frac{1}{2\pi{\bar{\epsilon}}}\int_{\partial B_{{\bar{\epsilon}}}(p')}\tilde{K}_{11}(x')\frac{\sqrt{\det K(T_p^{-1}x')}^{-1}+\sqrt{\det K(T_p^{-1}p')}^{-1}}{2}v(x')dS_{x'}+o(1)\\
	&=\sqrt{\det K(p)}^{-1}v(p')+o(1)\\
	&=\sqrt{\det K(p)}^{-1}u(p)+o(1).
	\end{split}
	\end{equation}
	Applying \eqref{b1}, \eqref{b2}, \eqref{b3} and \eqref{b4} to \eqref{b0} and letting ${\bar{\epsilon}}\to 0$, we get
	\begin{equation*}
	\begin{split}
	\sqrt{\det K(p)}^{-1}u(p)=&\int_{U_p}\tilde{G}_K(x',p')\tilde{L}v(x')dx'\\
	=&\det (T_p)\int_{U}{G}_K(x,p)\mathcal{L}_Ku(x)dx\\
	=&\sqrt{\det K(p)}^{-1}\int_{U}{G}_K(x,p)\mathcal{L}_Ku(x)dx.
	\end{split}
	\end{equation*}
Then \eqref{2-13} holds.
	
Finally, we observe that if $ f_1,f_2\in L^q(U) $, then there holds
\begin{equation*}
\begin{split}
&\int_U f_1\mathcal{G}_K f_2-\iint_{U\times U}G_0(y,x)f_1(x)f_2(y)dxdy\\
&\ \ =\int_U f_2\mathcal{G}_K f_1-\iint_{U\times U}G_0(y,x)f_2(x)f_1(y)dxdy,
\end{split}
\end{equation*}	
and thus
\begin{equation*}
\iint_{U\times U}S_K(y,x)f_1(x)f_2(y)dxdy=\iint_{U\times U}S_K(y,x)f_2(x)f_1(y)dxdy.
\end{equation*}	
It follows that for every $ x,y\in U $, $ S_K(x, y) = S_K(y, x) $, and thus the function $ S_K\in C^{0,\gamma}_{loc}(U\times U). $  By \eqref{2-111}, \eqref{2-13} and Proposition \ref{exist of G_K}, we get \eqref{2-11}. The proof of Theorem \ref{expe of G_K} is therefore finished.

\end{proof}
\begin{remark}
One can not improve the regularity of  $ S_K$ to  $ C^{0,\gamma}(\overline{U\times U}) $. Since from \eqref{2-15}, for any $ x\in \partial U $, $ S_K(x,y)=-G_0(x,y)\to -\infty $ as $ y\to x. $ This implies that $ ||S_K||_{L^\infty(U\times U)} $ is unbounded.
\end{remark}

\section{Variational settings: Existence and profile of maximizers}
As an application, Theorem \ref{expe of G_K} can be used to construct concentrated helical vorticity field to 3D Euler equations in infinite pipes, which is shown in Theorem \ref{thm01}.
%In this section, we begin to give proof of Theorem \ref{thm01}.
From subsection 1.2, it suffices to construct a family of concentrated solutions to \eqref{rot eq2} with $ f_\varepsilon(t)=\frac{1}{\varepsilon^2}\textbf{1}_{\{t>\mu^\varepsilon\}} $ for some $ \mu^\varepsilon $.
To this end, our first step is to introduce a certain variational problem and show the existence and profile of maximizers.

Let $k>0, d>0, R^*>0$  be three fixed numbers. For any $ r_*\in (0,R^*) $, from now on we will always choose  $ \alpha>0 $  such that
\begin{equation}\label{alpha}
\alpha=\frac{d}{4\pi k\sqrt{k^2+r_*^2}}.
\end{equation}
Denote $ \Omega=B_{R^*}(0) $ the cross-section of the infinite cylinder $ D $. Let $0<\varepsilon<\min\{1, \sqrt{|\Omega|/d}\}$ be a parameter. Define
\begin{equation*}
\mathcal{M}_{\varepsilon}:=\left \{\omega\in L^\infty(\Omega)~|~\int_{\Omega}\omega dx = d,\ \text{and} ~ 0\le \omega \le \frac{1}{\varepsilon^2} \right \}.
\end{equation*}
For any $ \omega\in \mathcal{M}_{\varepsilon} $, define the kinetic energy
\begin{equation}\label{KE}
\mathcal{E}(\omega):=\frac{1}{2}\int_{\Omega}\omega\mathcal{G}_{K_H}\omega dx,
\end{equation}
and the moment of inertia
\begin{equation}\label{MI}
\mathcal{I}(\omega):=\frac{1}{2}\int_{\Omega}|x|^2\omega  dx.
\end{equation}

Consider the maximization problem of the following functional over $\mathcal{M}_{\varepsilon}$
\begin{equation}\label{Energy}
\begin{split}
\mathcal{E}_\varepsilon(\omega):=&\mathcal{E}(\omega)-\alpha\ln\frac{1}{\varepsilon}\mathcal{I}(\omega)\\
=&\frac{1}{2}\int_{\Omega}\omega\mathcal{G}_{K_H}\omega dx-\frac{\alpha }{2}\ln\frac{1}{\varepsilon}\int_{\Omega}|x|^2\omega  dx.
\end{split}
\end{equation}
For any $\omega\in \mathcal{M}_{\varepsilon}$, by the classical elliptic estimate, we have $\mathcal{G}_{K_H}\omega\in W^{2,p}(\Omega)$ for any $1<p<+\infty$.  Thus $ \mathcal{E}_\varepsilon(\omega) $ is a well-defined functional on $ \mathcal{M}_{\varepsilon} $.

\begin{proposition}\label{exist of max}
There exists $\omega=\omega_{\varepsilon} \in \mathcal{M}_{\varepsilon} $ such that
	\begin{equation*}
	\mathcal{E}_\varepsilon(\omega_{\varepsilon})= \max_{\tilde{\omega} \in \mathcal{M}_{\varepsilon}}\mathcal{E}_\varepsilon(\tilde{\omega})<+\infty.
	\end{equation*}
	Moreover,
	\begin{equation}\label{3-1}
	\omega_{\varepsilon}=\frac{1}{\varepsilon^2}\mathbf{1}_{\{\psi^{\varepsilon}>0\}}  \ \ a.e.\  \text{in}\  \Omega,
	\end{equation}
	where
	\begin{equation}\label{3-2}
	\psi^{\varepsilon}=\mathcal{G}_{K_H}\omega_{\varepsilon}-\frac{\alpha |x|^2}{2}\ln\frac{1}{\varepsilon} -\mu^{\varepsilon},
	\end{equation}
	and the Lagrange multiplier $\mu^{\varepsilon} \ge -\frac{\alpha |R^*|^2}{2} \ln{\frac{1}{\varepsilon}}$ is determined by $\omega_{\varepsilon}$. Consequently, $ \omega_{\varepsilon} $ is a weak solution to \eqref{rot eq}  with $ f_{\varepsilon}(t)=\frac{1}{\varepsilon^2}\textbf{1}_{\{t>\mu^\varepsilon\}} $.
\end{proposition}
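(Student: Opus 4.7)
The plan is to obtain $\omega_\varepsilon$ by the direct method in the calculus of variations, then derive the profile via a first-order variational argument combined with the bathtub principle, and finally rule out an indeterminate level set via an elliptic regularity computation.

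\emph{Existence.} First I observe that $\mathcal{M}_\varepsilon$ is nonempty (the hypothesis $\varepsilon^2<|\Omega|/d$ permits choosing $\omega=\varepsilon^{-2}\mathbf{1}_E$ for any measurable $E\subset\Omega$ of measure $d\varepsilon^2$) and weak-$*$ compact in $L^\infty(\Omega)$. The energy $\mathcal{E}_\varepsilon$ is bounded above on $\mathcal{M}_\varepsilon$: the moment of inertia is trivially controlled, and the kinetic energy is bounded using Proposition~\ref{exist of G_K} together with the uniform $L^\infty$ bound. Taking a maximizing sequence $\{\omega_n\}\subset\mathcal{M}_\varepsilon$, I extract a weak-$*$ limit $\omega_\varepsilon\in\mathcal{M}_\varepsilon$; weak continuity of $\mathcal{E}_\varepsilon$ follows because $|x|^2\in L^1(\Omega)$ controls $\mathcal{I}$, and the $W^{2,p}$-bound on $\mathcal{G}_{K_H}\omega_n$ combined with Rellich--Kondrachov gives strong $L^2$ convergence of $\mathcal{G}_{K_H}\omega_n$, hence $\mathcal{E}(\omega_n)\to\mathcal{E}(\omega_\varepsilon)$.

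\emph{Profile.} The convexity of $\mathcal{M}_\varepsilon$ allows me to test admissible perturbations $\omega_\varepsilon+t(\eta-\omega_\varepsilon)$, $t\in[0,1]$. Using the symmetry of the Green's function supplied by Theorem~\ref{expe of G_K} (so that $\mathcal{G}_{K_H}$ is self-adjoint), the first-order optimality condition reduces to
\begin{equation*}
\int_\Omega(\eta-\omega_\varepsilon)\,\zeta^\varepsilon\,dx\le 0\quad\forall\,\eta\in\mathcal{M}_\varepsilon,\qquad \zeta^\varepsilon:=\mathcal{G}_{K_H}\omega_\varepsilon-\tfrac{\alpha|x|^2}{2}\ln\tfrac{1}{\varepsilon}.
\end{equation*}
Thus $\omega_\varepsilon$ maximizes the linear functional $\eta\mapsto\int\eta\zeta^\varepsilon\,dx$ on $\mathcal{M}_\varepsilon$; the bathtub principle then furnishes a constant $\mu^\varepsilon$ such that $\omega_\varepsilon=\varepsilon^{-2}$ on $\{\zeta^\varepsilon>\mu^\varepsilon\}$ and $\omega_\varepsilon=0$ on $\{\zeta^\varepsilon<\mu^\varepsilon\}$, with $\mu^\varepsilon$ fixed by the mass constraint. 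Setting $\psi^\varepsilon=\zeta^\varepsilon-\mu^\varepsilon$ yields \eqref{3-2}.

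\emph{Vanishing level set and bound on $\mu^\varepsilon$.} The step I expect to be the main obstacle is ruling out $|\{\psi^\varepsilon=0\}|>0$ in order to upgrade the profile to the strong form \eqref{3-1}. Since $\omega_\varepsilon\in L^\infty(\Omega)$ and $|x|^2$ is smooth, $\psi^\varepsilon\in W^{2,p}(\Omega)$ for every finite $p$, and Stampacchia's lemma gives $\nabla\psi^\varepsilon=0$ and $D^2\psi^\varepsilon=0$ a.e.\ on $\{\psi^\varepsilon=0\}$, whence $\mathcal{L}_{K_H}\psi^\varepsilon=0$ a.e.\ there. A direct computation from \eqref{coef matrix} gives $\mathcal{L}_{K_H}(|x|^2/2)=-2k^4/(k^2+|x|^2)^2$, so
\begin{equation*}
\mathcal{L}_{K_H}\psi^\varepsilon=\omega_\varepsilon+\alpha\ln\tfrac{1}{\varepsilon}\cdot\frac{2k^4}{(k^2+|x|^2)^2};
\end{equation*}
a positive-measure level set would therefore force $\omega_\varepsilon<0$, contradicting $\omega_\varepsilon\ge 0$. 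The lower bound on $\mu^\varepsilon$ follows from the maximum principle ($\mathcal{G}_{K_H}\omega_\varepsilon\ge 0$ on $\Omega$): if $\mu^\varepsilon<-\tfrac{\alpha|R^*|^2}{2}\ln\tfrac{1}{\varepsilon}$, then $\psi^\varepsilon>0$ throughout $\Omega$, yielding $\int\omega_\varepsilon=|\Omega|/\varepsilon^2>d$, contradicting the mass constraint together with $\varepsilon^2<|\Omega|/d$.
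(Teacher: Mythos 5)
Your proposal is correct and follows essentially the same route as the paper: the direct method with weak-$*$ compactness for existence, a one-sided first-variation argument plus the bathtub principle for the patch profile, the a.e.\ vanishing of $\mathcal{L}_{K_H}\psi^\varepsilon$ on the level set $\{\psi^\varepsilon=0\}$ combined with the sign of $\mathcal{L}_{K_H}(|x|^2/2)=-2k^4/(k^2+|x|^2)^2$ to rule out an indeterminate set, and a mass-constraint contradiction for the bound $\mu^\varepsilon\ge-\frac{\alpha|R^*|^2}{2}\ln\frac{1}{\varepsilon}$. Your extra explicitness (Rellich--Kondrachov for convergence of the quadratic term, Stampacchia's lemma where the paper says ``properties of Sobolev space'') only fills in details the paper leaves implicit.
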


\begin{proof}
Clearly, it follows from the definition of $ \mathcal{M}_\varepsilon $ that
\begin{equation*}
\sup_{\tilde{\omega} \in \mathcal{M}_{\varepsilon}}\mathcal{E}_\varepsilon(\tilde{\omega})<+\infty.
\end{equation*}
We may take a sequence $\{\omega_{j}\}\subset \mathcal{M}_{\varepsilon}$ such that as $j\to +\infty$
	\begin{equation*}
	\begin{split}
	\mathcal{E}_\varepsilon(\omega_{j})  \to \sup\{\mathcal{E}_\varepsilon(\tilde{\omega})\mid\tilde{\omega}\in \mathcal{M}_{\varepsilon}\}.
	\end{split}
	\end{equation*}
Since $ \mathcal{M}_\varepsilon $ is closed in $ L^\infty(\Omega) $ weak star topology and $ L^2(\Omega) $ weak  topology, there exists $ \omega\in L^{\infty}(\Omega) $ such that
\begin{equation*}
\omega_{j}\to \omega \ \ \ \ \ \ ~~\text{weakly~in}\ L^{2}(\Omega)\ \text{and~weakly~star~in}\ L^{\infty}(\Omega).
\end{equation*}
It is easily checked that $\omega\in \mathcal{M}_{\varepsilon}$. Since $ \mathcal{G}_{K_H} $ is a bounded operator from $ L^2(\Omega) $ to $ W^{2,2}(\Omega) $, we first have
	\begin{equation*}
	\lim_{j\to +\infty}\int_\Omega{\omega_{j} \mathcal{G}_{K_H}\omega_{j}}dx = \int_\Omega{\omega \mathcal{G}_{K_H}\omega}dx.
	\end{equation*}
On the other hand, we have
	\begin{equation*}
	\begin{split}
	\lim_{j\to +\infty} \int_{\Omega}|x|^2\omega_j dx    = \int_{\Omega}|x|^2\omega  dx.
	\end{split}
	\end{equation*}
Consequently, we may conclude that $\mathcal{E}_\varepsilon(\omega)=\lim_{j\to +\infty}\mathcal{E}_\varepsilon(\omega_j)=\sup \mathcal{E}_\varepsilon$, with $\omega\in \mathcal{M}_{\varepsilon}$.
	
We now turn to prove $\eqref{3-1}$. Consider the family of variations of $\omega$
	\begin{equation*}
	\omega_{(s)}=\omega+s(\tilde{\omega}-\omega),\ \ \ s\in[0,1],
	\end{equation*}
defined for arbitrary $\tilde{\omega}\in \mathcal{M}_{\varepsilon}$. Since $\omega$ is a maximizer, we have $ \mathcal{E}_\varepsilon(\omega)\geq \mathcal{E}_\varepsilon(\omega_{(s)}) $, which implies that
	\begin{equation*}
	\begin{split}
	0 & \ge \frac{d}{ds}\mathcal{E}_\varepsilon(\omega_{(s)})|_{s=0^+} \\
	& =\int_{\Omega}(\tilde{\omega}-\omega)\left(\mathcal{G}_{K_H}\omega-\frac{\alpha |x|^2}{2} \ln{\frac{1}{\varepsilon}} \right)dx.
	\end{split}
	\end{equation*}
This shows that for any  $\tilde{\omega}\in \mathcal{M}_{\varepsilon}$, there holds
	\begin{equation*}
	\int_{\Omega}\omega \left(\mathcal{G}_{K_H}\omega-\frac{\alpha |x|^2}{2} \ln{\frac{1}{\varepsilon}}\right)dx \ge \int_{\Omega}\tilde{\omega}  \left(\mathcal{G}_{K_H}\omega-\frac{\alpha |x|^2}{2} \ln{\frac{1}{\varepsilon}} \right)dx.
	\end{equation*}
By an adaptation of the bathtub principle (see \cite{Lieb}, \S1.14), we obtain
	\begin{equation}\label{3-3}
	\begin{cases}
\mathcal{G}_{K_H}\omega-\frac{\alpha |x|^2}{2} \ln{\frac{1}{\varepsilon}}-\mu^{\varepsilon}\ge0, &\ \ \   \omega=\frac{1}{\varepsilon^2}, \\
\mathcal{G}_{K_H}\omega-\frac{\alpha |x|^2}{2} \ln{\frac{1}{\varepsilon}}-\mu^{\varepsilon}=0, &\ \ \   0<\omega<\frac{1}{\varepsilon^2}, \\
\mathcal{G}_{K_H}\omega-\frac{\alpha |x|^2}{2} \ln{\frac{1}{\varepsilon}}-\mu^{\varepsilon}\le 0, &\ \ \   \omega=0,
	\end{cases}
	\end{equation}
where $\mu^\varepsilon $ arises as a Lagrange multiplier satisfying
\begin{equation}\label{def of mu}
\mu^\varepsilon =\inf\left\{s\in\mathbb{R}\,\,\,\mid\,\,\, \bigg|\left\{x\in \Omega\,\,\mid\,\, \mathcal{G}_{K_H}\omega-\frac{\alpha |x|^2}{2} \ln{\frac{1}{\varepsilon}}>s\right\}\bigg|\leq d\varepsilon^2\right\}.
\end{equation}
This implies that $ \omega=\frac{1}{\varepsilon^2} $ a.e. in $ \{x\in\Omega\mid \mathcal{G}_{K_H}\omega-\frac{\alpha |x|^2}{2} \ln{\frac{1}{\varepsilon}}-\mu^{\varepsilon}>0\} $ and $ \omega=0 $ a.e. in $ \{x\in\Omega\mid \mathcal{G}_{K_H}\omega-\frac{\alpha |x|^2}{2} \ln{\frac{1}{\varepsilon}}-\mu^{\varepsilon}<0\} $.

On the set $ \{x\in\Omega\mid \mathcal{G}_{K_H}\omega-\frac{\alpha |x|^2}{2} \ln{\frac{1}{\varepsilon}}-\mu^{\varepsilon}=0\} $, by properties of Sobolev space, we have
\begin{equation*}
\omega=\mathcal{L}_{K_H}(\mathcal{G}_{K_H}\omega)=\mathcal{L}_{K_H}\left (\frac{\alpha |x|^2}{2} \ln{\frac{1}{\varepsilon}}\right )=-\frac{2 k^4\alpha}{(k^2+|x|^2)^2}\ln\frac{1}{\varepsilon}<0\ \ \text{a.e.}
\end{equation*}
However, by the definition of $ \mathcal{M}_{\varepsilon} $, we have $ w\ge 0 $ a.e. in $ \Omega $. So the Lebesgue measure of the set $ \{x\in\Omega\mid \mathcal{G}_{K_H}\omega-\frac{\alpha |x|^2}{2} \ln{\frac{1}{\varepsilon}}-\mu^{\varepsilon}=0\} $ is zero. Let $ \omega_\varepsilon=\omega $.
Now the stated form $\eqref{3-1}$ follows immediately.

We prove the lower bound of $ \mu^\varepsilon $.  We shall argue by contradiction.  Suppose that $ \mu^\varepsilon<-\frac{\alpha |R^*|^2}{2} \ln{\frac{1}{\varepsilon}} $, then by \eqref{def of mu}, we get
\begin{equation}\label{3-4}
\bigg|\left \{x\in \Omega\mid \mathcal{G}_{K_H}\omega-\frac{\alpha |x|^2}{2} \ln{\frac{1}{\varepsilon}}>-\frac{\alpha |R^*|^2}{2} \ln{\frac{1}{\varepsilon}}\right \}\bigg|\leq d\varepsilon^2.
\end{equation}
Since $ \mathcal{G}_{K_H} $ is a positive  elliptic operator, we have $ \mathcal{G}_{K_H}\omega> 0 $ in $ \Omega $. Note also that  for any $ x\in \Omega $,  $ |x|\leq R^* $. Thus  we get $ \mathcal{G}_{K_H}\omega-\frac{\alpha |x|^2}{2} \ln{\frac{1}{\varepsilon}}>-\frac{\alpha |R^*|^2}{2} \ln{\frac{1}{\varepsilon}} $ in $ \Omega $. This implies that $ \bigg|\left \{x\in \Omega\mid \mathcal{G}_{K_H}\omega-\frac{\alpha |x|^2}{2} \ln{\frac{1}{\varepsilon}}>-\frac{\alpha |R^*|^2}{2} \ln{\frac{1}{\varepsilon}}\right \}\bigg|=|\Omega| $, which is clearly a contradiction to \eqref{3-4} since $ \varepsilon<\sqrt{|\Omega|/d} $. By \eqref{3-1} , it is not hard to check that $ \omega_{\varepsilon} $ is a weak solution to \eqref{rot eq}. The proof is thus completed.
\end{proof}

\section{Asymptotic behavior of maximizers $ \omega_\varepsilon $}
Now we are to obtain the limiting behavior of $  \omega_\varepsilon $ as $ \varepsilon $ tends to 0. The key is to use the structure of Green's function of the operator $ \mathcal{L}_{K_H} $ shown in Theorem \ref{expe of G_K}. For convenience we will use $ C $
to denote generic positive constants independent of $ \varepsilon $ that may change from line to line.

Let us define a function $ Y(x):\Omega\to\mathbb{R} $ which will be frequently used in this section. For any $ x\in \Omega $, define
\begin{equation}\label{def of Y}
Y(x):=\frac{d}{2\pi\sqrt{\det K_H(x)}}-\alpha|x|^2=\frac{d\sqrt{k^2+|x|^2}}{2\pi k}-\alpha|x|^2,
\end{equation}
where $ \alpha $ is chosen by \eqref{alpha}. Clearly, $ Y $ is radially symmetric in $ \Omega=B_{R^*}(0) $. Then one computes directly that
\begin{lemma}\label{Y max}
Under the choice of $ \alpha $ in \eqref{alpha},  the maximizers set of $ Y $ in $ \Omega $ is $ \{x\mid |x|=r_*\} $. That is, $ Y|_{\partial B_{r_*}(0)}=\max_{\Omega}Y $. Moreover, up to a rotation the maximizer is unique.
\end{lemma}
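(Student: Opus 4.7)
The plan is to reduce the problem to a one-variable calculus exercise, since $Y$ depends only on $|x|$. Writing $r=|x|$, set
\begin{equation*}
h(r):=\frac{d\sqrt{k^2+r^2}}{2\pi k}-\alpha r^2,\qquad r\in[0,R^*],
\end{equation*}
so that $Y(x)=h(|x|)$. Then the radial symmetry of $Y$ means that the maximizer set is necessarily a union of concentric circles together with possibly the origin, and it suffices to locate the maxima of $h$ on $[0,R^*]$.

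First I would compute
\begin{equation*}
h'(r)=r\left(\frac{d}{2\pi k\sqrt{k^2+r^2}}-2\alpha\right).
\end{equation*}
Setting $h'(r)=0$ gives either $r=0$ or
\begin{equation*}
\frac{d}{2\pi k\sqrt{k^2+r^2}}=2\alpha.
\end{equation*}
Inserting the prescribed value $\alpha=\tfrac{d}{4\pi k\sqrt{k^2+r_*^2}}$ from \eqref{alpha} turns the right-hand side into $\tfrac{d}{2\pi k\sqrt{k^2+r_*^2}}$, and hence $r=r_*$ is the unique positive solution. Thus the only interior critical value of $h$ in $(0,R^*)$ is $r_*$.

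Next, observe that the function $r\mapsto\tfrac{d}{2\pi k\sqrt{k^2+r^2}}$ is strictly decreasing in $r$; consequently $h'(r)/r>0$ for $0<r<r_*$ and $h'(r)/r<0$ for $r>r_*$. Therefore $h$ is strictly increasing on $[0,r_*]$ and strictly decreasing on $[r_*,R^*]$, so $r_*$ is the unique maximizer of $h$ on $[0,R^*]$ (in particular it beats both $r=0$ and $r=R^*$). Translating back to $Y$, this shows $Y$ attains its maximum on $\Omega$ exactly on the circle $\partial B_{r_*}(0)$, and modulo the $SO(2)$-action the maximizer is unique. No serious obstacle is anticipated; the only thing to verify carefully is the sign analysis of $h'$, which is immediate from the monotonicity of $\sqrt{k^2+r^2}$.
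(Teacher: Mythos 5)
Your proposal is correct: the computation of $h'(r)=r\bigl(\tfrac{d}{2\pi k\sqrt{k^2+r^2}}-2\alpha\bigr)$, the identification of $r=r_*$ as the unique positive critical point under the choice \eqref{alpha}, and the monotonicity argument on either side of $r_*$ together establish exactly the claim. This is precisely the "direct computation" the paper alludes to (it gives no further proof), so your argument matches the intended approach.
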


Let $G_{K_H}(x,y)$ be the Green's function of $\mathcal{L}_{K_H}$ in $\Omega$, with respect to zero Dirichlet data and the Lebesgue measure $dx$. That is, the solution of the  second-order elliptic equation in divergence form
\begin{equation*}
\begin{cases}
-\text{div}(K_H(x)\nabla u)=f,\  &\ \Omega,\\
u=0,\ &\ \partial \Omega
\end{cases}
\end{equation*}
can be expressed by
\begin{equation*}
u(\cdot)=\mathcal{G}_{K_H}f(\cdot)=\int_{\Omega}G_{K_H}(\cdot,y)f(y)dy.
\end{equation*}
By Theorem \ref{expe of G_K}, we have the following properties of $G_{K_H}$.

\begin{proposition}\label{expe of G_H}
Let $ p>2. $ There exists a function $ S_{K_H}\in C^{0,\gamma}_{loc} (\Omega\times \Omega) $ for some $ \gamma\in(0,1) $ such that for every $\omega\in L^p(\Omega)$ and every $ x\in \Omega $,
\begin{equation*}
\begin{split}
\mathcal{G}_{K_H}\omega(x)=&\int_{\Omega}G_{K_H}(x,y)\omega(y)dy\\
=&\int_{\Omega}\left (\frac{\sqrt{\det K_H(x)}^{-1}+\sqrt{\det K_H(y)}^{-1}}{2}\Gamma\left (\frac{T_x+T_y}{2}(x-y)\right )+S_{K_H}(x,y)\right)\omega(y)dy.
\end{split}
\end{equation*}
Here $ \Gamma(x)=-\frac{1}{2\pi}\ln|x|$ is the fundamental solution of the Laplacian $ -\Delta $ in $ \mathbb{R}^2 $ and $ T_x $ is a $ C^\infty $ positive-definite matrix-valued function determined by $ K_H $ satisfying
\begin{equation}\label{matrix T}
T_x^{-1}(T_x^{-1})^{t}=K_H(x)\ \ \ \ \forall\ x\in \Omega.
\end{equation}
Moreover, $ S_{K_H}(x,y)=S_{K_H}(y,x) $ for any $ x,y\in\Omega $ and
\begin{equation*}
S_{K_H}(x,y)\leq C \ \ \ \ \forall\ x,y \in \Omega.
\end{equation*}
%\begin{equation*}
%|S_b(x,y)|\leq \frac{C}{\ln(dist(x,\partial \Omega)+dist(x,\partial \Omega)+dist(x,y))} \ \ \ \ \forall\ x,y \in \Omega.
%\end{equation*}
\end{proposition}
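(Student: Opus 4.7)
The plan is to recognize Proposition \ref{expe of G_H} as an immediate application of Theorem \ref{expe of G_K} to the specific matrix $K_H$ on the disc $\Omega = B_{R^*}(0)$. Thus essentially all the work consists of verifying the hypotheses $(\mathcal{K}1)$ and $(\mathcal{K}2)$ for $K_H$; the representation formula, the symmetry of $S_{K_H}$, the local H\"older regularity, and the pointwise upper bound are then inherited directly from Theorem \ref{expe of G_K} with $q=p>2$.

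First I would check $(\mathcal{K}1)$. Since $k>0$ and $\overline{\Omega}$ is bounded, the denominator $k^2+x_1^2+x_2^2$ is bounded below by $k^2>0$ on $\overline{\Omega}$, so every entry of
\begin{equation*}
K_H(x_1,x_2)=\frac{1}{k^2+x_1^2+x_2^2}\begin{pmatrix} k^2+x_2^2 & -x_1x_2 \\ -x_1x_2 & k^2+x_1^2 \end{pmatrix}
\end{equation*}
belongs to $C^\infty(\overline{\Omega})$.

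Next I would establish $(\mathcal{K}2)$ by the explicit identity
\begin{equation*}
(K_H(x)\zeta\mid\zeta)=\frac{k^2|\zeta|^2+(x_2\zeta_1-x_1\zeta_2)^2}{k^2+|x|^2},
\end{equation*}
obtained by expanding the quadratic form. The nonnegative cross term $(x_2\zeta_1-x_1\zeta_2)^2$ is bounded above by $|x|^2|\zeta|^2$ via Cauchy--Schwarz, yielding the upper bound $(K_H(x)\zeta\mid\zeta)\le|\zeta|^2$. Dropping the cross term and using $|x|\le R^*$ gives the lower bound $(K_H(x)\zeta\mid\zeta)\ge \frac{k^2}{k^2+(R^*)^2}|\zeta|^2$. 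So uniform ellipticity holds with $\Lambda_1=k^2/(k^2+(R^*)^2)$ and $\Lambda_2=1$. This is the only genuine computation in the proof, so it is the closest thing to an obstacle, though it is essentially routine.

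Once the hypotheses are confirmed, I would simply invoke Theorem \ref{expe of G_K} with $U=\Omega$, $K=K_H$, and $q=p>2$. It provides a function $S_{K_H}\in C^{0,\gamma}_{\mathrm{loc}}(\Omega\times\Omega)$ and the $C^\infty$ positive-definite matrix-valued function $T_x$ determined by \eqref{matrix T} such that the stated integral representation of $\mathcal{G}_{K_H}\omega$ holds for every $\omega\in L^p(\Omega)$ and every $x\in\Omega$, while the symmetry $S_{K_H}(x,y)=S_{K_H}(y,x)$ and the uniform upper bound $S_{K_H}(x,y)\le C$ are transcribed directly from \eqref{2-111} and \eqref{2-12}. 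This completes the proof; one may additionally record for later use the explicit formula $\sqrt{\det K_H(x)}^{-1}=\sqrt{k^2+|x|^2}/k$, which makes the leading term of the Green's function fully explicit and connects the representation to the function $Y$ in \eqref{def of Y}.
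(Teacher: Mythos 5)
Your proposal is correct and follows exactly the paper's route: the paper obtains Proposition \ref{expe of G_H} simply by applying Theorem \ref{expe of G_K} with $U=\Omega=B_{R^*}(0)$ and $K=K_H$, the hypotheses $(\mathcal{K}1)$--$(\mathcal{K}2)$ being immediate from the explicit form \eqref{coef matrix} (your quadratic-form identity and the bounds $\Lambda_1=k^2/(k^2+(R^*)^2)$, $\Lambda_2=1$ are the right verification). Your added computation $\sqrt{\det K_H(x)}^{-1}=\sqrt{k^2+|x|^2}/k$ is also consistent with the paper's use of $Y$ in \eqref{def of Y}.
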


\subsection{Lower bound of $ \mathcal{E}_\varepsilon(\omega_\varepsilon) $}
We begin by giving a lower bound of $ \mathcal{E}_\varepsilon(\omega_\varepsilon) $.

\begin{lemma}\label{lowbdd of E}
For any $z\in \Omega$, there exists $C>0$ such that for all $\varepsilon$ sufficiently small,
	\begin{equation*}
\mathcal{E}_\varepsilon(\omega_\varepsilon)\ge \frac{d}{2}Y(z)\ln\frac{1}{\varepsilon}-C.
	\end{equation*}
As a consequence, there holds
\begin{equation}\label{4-1}
\mathcal{E}_\varepsilon(\omega_\varepsilon)\ge \frac{d}{2}Y((r_*,0))\ln\frac{1}{\varepsilon}-C.
\end{equation}
\end{lemma}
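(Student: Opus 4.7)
The plan is to exhibit, for each $ z\in\Omega $, an explicit trial function $ \tilde\omega_z\in\mathcal{M}_\varepsilon $ whose energy attains the stated lower bound, and then invoke the maximality of $ \omega_\varepsilon $. Since the expansion in Proposition \ref{expe of G_H} is naturally adapted to the metric induced by $ T_z $, I would take the support of $ \tilde\omega_z $ to be a small ``$T_z$-ellipse'' centred at $z$. Concretely, set $ r_\varepsilon^2:=d\varepsilon^2/(\pi\sqrt{\det K_H(z)}) $ and
\begin{equation*}
A_z:=z+T_z^{-1}(B_{r_\varepsilon}(0)),\qquad \tilde\omega_z:=\varepsilon^{-2}\mathbf 1_{A_z}.
\end{equation*}
Using \eqref{matrix T} to get $ |\det T_z^{-1}|=\sqrt{\det K_H(z)} $, a linear change of variables gives $ |A_z|=d\varepsilon^2 $, so $ \tilde\omega_z\in\mathcal{M}_\varepsilon $ for every $ \varepsilon $ small enough that $ A_z\subset\Omega $.

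Next I would estimate $ \mathcal{E}(\tilde\omega_z) $ by feeding $ \tilde\omega_z $ into the representation of Proposition \ref{expe of G_H}. On $ A_z\times A_z $ the smooth quantities $ \sqrt{\det K_H(x)}^{-1} $ and $ T_x $ agree with their values at $ z $ up to $ O(\varepsilon) $, and by positive-definiteness and smoothness of $ T_{\cdot} $ one has
\begin{equation*}
\Bigl|\tfrac{T_x+T_y}{2}(x-y)\Bigr|=|T_z(x-y)|\bigl(1+O(\varepsilon)\bigr)\qquad\text{uniformly in }x\neq y\in A_z,
\end{equation*}
so the pointwise correction inside $ \Gamma $ is $ O(\varepsilon) $. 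Combined with the local boundedness of $ S_{K_H} $ and the mass identity $ \varepsilon^{-4}|A_z|^2=d^2 $, all three error sources contribute only $ O(1) $ to the kinetic energy. Changing variables $ u=T_z(x-z) $, $ v=T_z(y-z) $ in the surviving leading term (with Jacobian $ \det K_H(z) $) yields
\begin{equation*}
\mathcal{E}(\tilde\omega_z)=\frac{\sqrt{\det K_H(z)}}{2\varepsilon^4}\iint_{B_{r_\varepsilon}\times B_{r_\varepsilon}}\Gamma(u-v)\,du\,dv+O(1).
\end{equation*}
A scaling of the standard disk self-energy gives $ \iint_{B_r\times B_r}\Gamma(u-v)\,du\,dv=-\tfrac{\pi r^4}{2}\ln r+O(r^4) $, and substituting $ r_\varepsilon^4=d^2\varepsilon^4/(\pi^2\det K_H(z)) $, $ \ln r_\varepsilon=\ln\varepsilon+O(1) $ delivers $ \mathcal{E}(\tilde\omega_z)=\tfrac{d^2}{4\pi\sqrt{\det K_H(z)}}\ln\tfrac{1}{\varepsilon}+O(1) $.

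For the moment of inertia, since $ A_z $ has diameter $ O(\varepsilon) $ and total mass $ d $, we have $ \mathcal{I}(\tilde\omega_z)=d|z|^2/2+O(\varepsilon) $, so $ \alpha\ln\tfrac{1}{\varepsilon}\,\mathcal{I}(\tilde\omega_z)=\tfrac{\alpha d|z|^2}{2}\ln\tfrac{1}{\varepsilon}+O\bigl(\varepsilon\ln\tfrac{1}{\varepsilon}\bigr) $. Recalling \eqref{def of Y} and combining,
\begin{equation*}
\mathcal{E}_\varepsilon(\tilde\omega_z)=\tfrac d2\!\left(\tfrac{d}{2\pi\sqrt{\det K_H(z)}}-\alpha|z|^2\right)\!\ln\tfrac{1}{\varepsilon}+O(1)=\tfrac d2\,Y(z)\ln\tfrac{1}{\varepsilon}+O(1).
\end{equation*}
The maximality of $ \omega_\varepsilon $ then yields the first assertion, and \eqref{4-1} follows by taking $ z=(r_*,0) $ and invoking Lemma \ref{Y max}.

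The main technical point in this plan is tracking the corrections of the Green's function away from $ z $: in particular, controlling the pointwise error inside $ \Gamma $ requires using the spectral lower bound on $ T_z $ to bound the ratio $ |T_z(x-y)|/|x-y| $ away from zero, so that replacing $ (T_x+T_y)/2 $ by $ T_z $ produces only a uniform multiplicative $ 1+O(\varepsilon) $ correction, and hence a pointwise $ O(\varepsilon) $ deviation in $ \Gamma $, rather than anything singular as $ x\to y $.
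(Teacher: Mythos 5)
Your proposal is correct and follows essentially the same route as the paper: the paper also tests the maximizer against $\tilde\omega_\varepsilon=\varepsilon^{-2}\mathbf 1_{z+T_z^{-1}B_{r_\varepsilon}(0)}$ with $\pi r_\varepsilon^2\sqrt{\det K_H(z)}=d\varepsilon^2$, freezes $\det K_H$ and $T$ at $z$ up to $O(\varepsilon)$ inside the expansion of Proposition \ref{expe of G_H}, changes variables by $T_z$, and uses the disk self-energy plus local boundedness of $S_{K_H}$ to reach $\tfrac d2 Y(z)\ln\tfrac1\varepsilon-C$, then takes $z=(r_*,0)$ via Lemma \ref{Y max}. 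The only cosmetic difference is that you track the kinetic energy as an equality up to $O(1)$, whereas the paper only records the one-sided bound it needs.
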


\begin{proof}
For any $z\in \Omega$, we choose a test function $\tilde{\omega}_{\varepsilon} $ defined by
\begin{equation*}
\tilde{\omega}_{\varepsilon}=\frac{1}{\varepsilon^2}\textbf{1}_{T_z^{-1}B_{r_\varepsilon}(0)+z},
\end{equation*}
where the matrix $ T_z $ satisfies \eqref{matrix T}, and $ r_\varepsilon>0 $ satisfies
\begin{equation}\label{4-2}
\pi r_\varepsilon^2\sqrt{\det K_H(z)}=\varepsilon^2d.
\end{equation}
Thus $ \tilde{\omega}_{\varepsilon} \in \mathcal{M}_{\varepsilon} $ for $ \varepsilon $ sufficiently small. Since $\omega_\varepsilon$ is a maximizer, we have $\mathcal{E}_\varepsilon(\omega_\varepsilon)\ge \mathcal{E}_\varepsilon(\tilde{\omega}_{\varepsilon})$. By Proposition \ref{expe of G_H}, we get
	\begin{equation*}
\begin{split}
\mathcal{E}_\varepsilon(\tilde{\omega}_{\varepsilon})=& \frac{1}{2}\int_\Omega\int_\Omega\tilde{\omega}_{\varepsilon}(x)G_{K_H}(x,y)\tilde{\omega}_{\varepsilon}(y)dxdy-{\frac{\alpha}{2} \ln{\frac{1}{\varepsilon}}}\int_{\Omega}|x|^2\tilde{\omega}_{\varepsilon}dx\\
\geq&\frac{1}{2}\int_{T_z^{-1}B_{r_\varepsilon}(0)+z}\int_{T_z^{-1}B_{r_\varepsilon}(0)+z}\frac{\sqrt{\det K_H(x)}^{-1}+\sqrt{\det K_H(y)}^{-1}}{2\varepsilon^4}\Gamma\left (\frac{T_x+T_y}{2}(x-y)\right) dxdy\\
& -{\frac{\alpha}{2} \ln{\frac{1}{\varepsilon}}}\int_{\Omega}|x|^2\tilde{\omega}_{\varepsilon}dx-C_1.
\end{split}
\end{equation*}
Since $ r_\varepsilon=O(\varepsilon),  $ by the positive-definiteness and regularity of $ K_H $, we have
\begin{equation*}
\sqrt{\det K_H(x)}^{-1}=\sqrt{\det K_H(z)}^{-1}+O(\varepsilon),\ \ \ \ \forall x\in T_z^{-1}B_{r_\varepsilon}(0)+z,
\end{equation*}
\begin{equation*}
\Gamma\left (\frac{T_x+T_y}{2}(x-y)\right)=\Gamma\left (T_z(x-y)\right)+O(\varepsilon),\ \ \ \ \forall x,y\in T_z^{-1}B_{r_\varepsilon}(0)+z,\ x\neq y.
\end{equation*}
Thus one computes directly that
	\begin{equation*}
	\begin{split}
\mathcal{E}_\varepsilon(\tilde{\omega}_{\varepsilon})
\geq&\frac{1}{2\varepsilon^4}\int_{T_z^{-1}B_{r_\varepsilon}(0)+z}\int_{T_z^{-1}B_{r_\varepsilon}(0)+z}\left (\sqrt{\det K_H(z)}^{-1}+O(\varepsilon)\right ) \left( \Gamma\left (T_z(x-y)\right)+O(\varepsilon)\right ) dxdy\\
& -{\frac{d\alpha(|z|+O(\varepsilon))^2}{2} \ln{\frac{1}{\varepsilon}}}-C_2\\
\ge& \frac{\sqrt{\det K_H(z)}^{-1}}{4\pi\varepsilon^4}\int_{T_z^{-1}B_{r_\varepsilon}(0)+z}\int_{T_z^{-1}B_{r_\varepsilon}(0)+z}\ln{\frac{1}{|T_z(x-y)|}}dxdy -{\frac{d\alpha|z|^2}{2} \ln{\frac{1}{\varepsilon}}}-C_3\\
=& \frac{\sqrt{\det K_H(z)}^{-1}}{4\pi\varepsilon^4}\int_{B_{r_\varepsilon}(0)}\int_{B_{r_\varepsilon}(0)}\ln{\frac{1}{|x'-y'|}}\cdot \det K_H(z)dx'dy' -{\frac{d\alpha|z|^2}{2} \ln{\frac{1}{\varepsilon}}}-C_3\\
\geq&\frac{\sqrt{\det K_H(z)}}{4\pi\varepsilon^4}(\pi r_\varepsilon^2)^2\ln\frac{1}{\varepsilon}-{\frac{d\alpha|z|^2}{2} \ln{\frac{1}{\varepsilon}}}-C_4\\
=& \frac{d}{2}Y(z)\ln\frac{1}{\varepsilon}-C_4,
	\end{split}
	\end{equation*}
where we have used \eqref{matrix T} and \eqref{4-2}. By choosing $ z=(r_*,0) $ and using Lemma \ref{Y max}, we get \eqref{4-1}. The proof is thus finished.
\end{proof}

We now turn to estimate the Lagrange multiplier $\mu^{\varepsilon}$.
\begin{lemma}\label{lowbdd of mu}
There holds for $ \varepsilon $ sufficiently small,
	\[\mu^{\varepsilon}\ge \frac{2}{d}\mathcal{E}_\varepsilon(\omega_\varepsilon)+ \frac{\alpha}{2d}\ln\frac{1}{\varepsilon}\int_\Omega |x|^2 \omega_\varepsilon dx-C.\]
As a consequence,
\begin{equation*}
\mu^{\varepsilon}\ge Y((r_*,0))\ln\frac{1}{\varepsilon}+ \frac{\alpha}{2d}\ln\frac{1}{\varepsilon}\int_\Omega |x|^2 \omega_\varepsilon dx-C.
\end{equation*}
\end{lemma}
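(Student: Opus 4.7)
The plan is to derive an algebraic identity by testing the Euler-Lagrange relation against $\psi^\varepsilon$, and then to bound the resulting positive term $\int_\Omega \varepsilon^{-2}\psi^\varepsilon_+\,dx$ above by a constant independent of $\varepsilon$. Since $\omega_\varepsilon=\varepsilon^{-2}\mathbf{1}_{\{\psi^\varepsilon>0\}}$, one has $\omega_\varepsilon\psi^\varepsilon=\varepsilon^{-2}\psi^\varepsilon_+$ pointwise; integrating, substituting the definition of $\psi^\varepsilon$, and regrouping via $2\mathcal{E}(\omega_\varepsilon)=2\mathcal{E}_\varepsilon(\omega_\varepsilon)+\alpha\ln\frac{1}{\varepsilon}\int|x|^2\omega_\varepsilon\,dx$ yields the exact identity
\begin{equation*}
\int_\Omega \varepsilon^{-2}\psi^\varepsilon_+\,dx
=2\mathcal{E}_\varepsilon(\omega_\varepsilon)+\frac{\alpha}{2}\ln\frac{1}{\varepsilon}\int_\Omega|x|^2\omega_\varepsilon\,dx-\mu^\varepsilon d.
\end{equation*}
Thus the claimed lower bound on $\mu^\varepsilon$ is equivalent to the upper bound $\int_\Omega \varepsilon^{-2}\psi^\varepsilon_+\,dx\le C$, which will be the main technical step.

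Second, I will use that $\psi^\varepsilon$ weakly solves
\begin{equation*}
\mathcal{L}_{K_H}\psi^\varepsilon=\omega_\varepsilon+\alpha\ln\tfrac{1}{\varepsilon}\cdot\tfrac{2k^4}{(k^2+|x|^2)^2}\quad\text{in }\Omega,
\end{equation*}
where the second term is a bounded function of $x$; the identity $\mathcal{L}_{K_H}(\tfrac{|x|^2}{2})=-\tfrac{2k^4}{(k^2+|x|^2)^2}$ has already been computed in the proof of Proposition \ref{exist of max}. Since $\mathcal{G}_{K_H}\omega_\varepsilon$ vanishes on $\partial\Omega$ and $\mu^\varepsilon\ge -\tfrac{\alpha R^{*2}}{2}\ln\tfrac{1}{\varepsilon}$ by that same proposition, $\psi^\varepsilon\le 0$ on $\partial\Omega$, so $\psi^\varepsilon_+\in H_0^1(\Omega)$ with support in $A_\varepsilon:=\{\psi^\varepsilon>0\}=\operatorname{supp}\omega_\varepsilon$, a set of Lebesgue measure $d\varepsilon^2$. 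Testing the PDE against $\psi^\varepsilon_+$ and using $\omega_\varepsilon\psi^\varepsilon_+=\varepsilon^{-2}\psi^\varepsilon_+$ together with the uniform ellipticity of $K_H$ then gives
\begin{equation*}
\Lambda_1\int_\Omega|\nabla\psi^\varepsilon_+|^2\,dx\le\int_\Omega\varepsilon^{-2}\psi^\varepsilon_+\,dx+C\ln\tfrac{1}{\varepsilon}\int_\Omega\psi^\varepsilon_+\,dx.
\end{equation*}

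The hard part is closing this inequality using only the smallness of $|A_\varepsilon|=d\varepsilon^2$, since no diameter bound on $A_\varepsilon$ is available at this stage of the paper. The decisive input is the two-dimensional Faber-Krahn inequality: for any $u\in H_0^1(\Omega)$ with $\operatorname{supp}u\subseteq A_\varepsilon$,
\begin{equation*}
\|u\|_{L^2(\Omega)}\le C|A_\varepsilon|^{1/2}\|\nabla u\|_{L^2(\Omega)}=C\varepsilon\|\nabla u\|_{L^2(\Omega)}.
\end{equation*}
Applying this to $u=\psi^\varepsilon_+$ together with the Cauchy-Schwarz estimate $\int\psi^\varepsilon_+\,dx\le|A_\varepsilon|^{1/2}\|\psi^\varepsilon_+\|_{L^2}$ yields $\int\psi^\varepsilon_+\,dx\le C\varepsilon^2\|\nabla\psi^\varepsilon_+\|_{L^2}$ and $\int\varepsilon^{-2}\psi^\varepsilon_+\,dx\le C\|\nabla\psi^\varepsilon_+\|_{L^2}$. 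Substituting into the energy inequality produces $\|\nabla\psi^\varepsilon_+\|_{L^2}^2\le C(1+\varepsilon^2|\ln\varepsilon|)\|\nabla\psi^\varepsilon_+\|_{L^2}$, so $\|\nabla\psi^\varepsilon_+\|_{L^2}\le C$ for all sufficiently small $\varepsilon$, and consequently $\int\varepsilon^{-2}\psi^\varepsilon_+\,dx\le C$. Inserting this into the identity proves the first inequality of the lemma.

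For the consequence, Lemma \ref{lowbdd of E} provides $\tfrac{2}{d}\mathcal{E}_\varepsilon(\omega_\varepsilon)\ge Y((r_*,0))\ln\tfrac{1}{\varepsilon}-C$, which, combined with the first inequality just established, immediately yields $\mu^\varepsilon\ge Y((r_*,0))\ln\tfrac{1}{\varepsilon}+\tfrac{\alpha}{2d}\ln\tfrac{1}{\varepsilon}\int_\Omega|x|^2\omega_\varepsilon\,dx-C$.
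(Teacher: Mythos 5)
Your proposal is correct and takes essentially the same route as the paper: the same identity expressing $d\mu^\varepsilon$ in terms of $2\mathcal{E}_\varepsilon(\omega_\varepsilon)$, $\frac{\alpha}{2}\ln\frac{1}{\varepsilon}\int_\Omega|x|^2\omega_\varepsilon\,dx$ and $\int_\Omega\omega_\varepsilon(\psi^\varepsilon)_+\,dx$, the same testing of $\mathcal{L}_{K_H}\psi^\varepsilon=\omega_\varepsilon-\frac{\alpha}{2}\ln\frac{1}{\varepsilon}\,\mathcal{L}_{K_H}|x|^2$ against $(\psi^\varepsilon)_+\in H^1_0(\Omega)$, and the same use of $|\mathrm{supp}(\omega_\varepsilon)|=d\varepsilon^2$ to close the resulting inequality for the gradient term. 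The only cosmetic difference is that you quote a Faber--Krahn/Poincar\'e inequality for functions supported on a set of measure $d\varepsilon^2$, whereas the paper reaches the identical bound via the embedding $W^{1,1}_0(\Omega)\subset L^2(\Omega)$ combined with Cauchy--Schwarz.
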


\begin{proof}
It follows from the definition of $ \mathcal{E}_\varepsilon $ and \eqref{3-2} that
\begin{equation}\label{4-5}
\begin{split}
2\mathcal{E}_\varepsilon(\omega_\varepsilon)=&\int_{\Omega}\omega_\varepsilon\mathcal{G}_{K_H}\omega_\varepsilon dx-\alpha \ln\frac{1}{\varepsilon}\int_{\Omega}|x|^2\omega_\varepsilon  dx\\
=&\int_{\Omega}\omega_\varepsilon\psi^\varepsilon dx-\frac{\alpha}{2} \ln\frac{1}{\varepsilon}\int_{\Omega}|x|^2\omega_\varepsilon  dx+d\mu^\varepsilon.
\end{split}
\end{equation}	

By \eqref{3-2} and $\mu^{\varepsilon} \ge -\frac{\alpha |R^*|^2}{2} \ln{\frac{1}{\varepsilon}}$, we have $ \psi^\varepsilon\leq0 $ on $ \partial \Omega. $ So $ (\psi^\varepsilon)_+\in H^1_0(\Omega). $ Denote $ \bar{A}_\varepsilon=\{x\in \Omega\mid \psi^\varepsilon>0\} $ the vortex core of $ \Omega_\varepsilon. $ By Proposition \ref{exist of max}, we know that $ supp(\omega_\varepsilon)=\bar{A}_\varepsilon. $
	
Since $ \mathcal{L}_{K_H} \psi^\varepsilon=\omega_\varepsilon-\frac{\alpha}{2}\mathcal{L}_{K_H}|x|^2\ln\frac{1}{\varepsilon}$, multiplying  $ (\psi^\varepsilon)_+ $ to both sides of this equation and integrating by parts, we get
\begin{equation}\label{4-3}
\int_\Omega(K_H(x)\nabla \psi^\varepsilon| \nabla(\psi^\varepsilon)_+)dx=\int_\Omega \omega_\varepsilon(\psi^\varepsilon)_+dx-\frac{\alpha}{2}\ln\frac{1}{\varepsilon}\int_\Omega\mathcal{L}_{K_H}|x|^2(\psi^\varepsilon)_+dx.
\end{equation}

On the other hand, since $ \mathcal{L}_{K_H}|x|^2=-\frac{4 k^4}{(k^2+|x|^2)^2}\in L^\infty(\Omega) $, we have
\begin{equation}\label{4-4}
\begin{split}
\int_\Omega \omega_\varepsilon&(\psi^\varepsilon)_+dx-\frac{\alpha}{2}\ln\frac{1}{\varepsilon}\int_\Omega\mathcal{L}_{K_H}|x|^2(\psi^\varepsilon)_+dx\\
\leq &\frac{C}{\varepsilon^2}\int_{\bar{A}_\varepsilon} (\psi^\varepsilon)_+dx\\
\leq& \frac{C|\bar{A}_\varepsilon|^{\frac{1}{2}}}{\varepsilon^2}\left (\int_{\Omega} (\psi^\varepsilon)_+^2dx\right )^{\frac{1}{2}}\\
\leq&\frac{C|\bar{A}_\varepsilon|^{\frac{1}{2}}}{\varepsilon^2}\int_{\bar{A}_\varepsilon} |\nabla (\psi^\varepsilon)_+|dx\\
\leq&\frac{C|\bar{A}_\varepsilon|}{\varepsilon^2}\left (\int_{\Omega} |\nabla (\psi^\varepsilon)_+|^2dx\right )^{\frac{1}{2}}\\
\leq&Cd\left( \int_\Omega(K_H(x)\nabla \psi^\varepsilon| \nabla(\psi^\varepsilon)_+)dx\right)^{\frac{1}{2}},
\end{split}
\end{equation}	
where we have used the Sobolev imbedding $ W^{1,1}_0(\Omega)\subset L^2(\Omega) $, the positive-definiteness of $ K_H $ and the fact that $ \frac{|\bar{A}_\varepsilon|}{\varepsilon^2}=\int_\Omega\omega_\varepsilon dx=d. $

Combining \eqref{4-3} and \eqref{4-4}, we conclude that $\int_\Omega \omega_\varepsilon(\psi^\varepsilon)_+dx $ is uniformly bounded with respect to $\varepsilon$. The desired result clearly follows from \eqref{4-5} and Lemma \ref{lowbdd of E}.
\end{proof}

\subsection{Asymptotic behavior of support set of $ \omega_\varepsilon $}

Now we analyze the limiting behavior of the maximizer $ \omega_\varepsilon $.  We will prove that, to maximize the energy $ \mathcal{E}_\varepsilon $,  the support set of $ \omega_\varepsilon $ must shrink to a single point which is a maximizer  of $ Y $ in $ \Omega $ as $ \varepsilon $ tends to 0. Thus from Lemma \ref{Y max}, $ supp(\omega_\varepsilon) $ will concentrate near $ \partial B_{r_*}(0) $.

Let
\begin{equation}\label{111A}
\bar{P}_\varepsilon=\inf\{|x|\mid x\in {supp}(\omega_\varepsilon)\},\ \ \text{and}\ \ \bar{Q}_\varepsilon=\sup\{|x|\mid x\in {supp}(\omega_\varepsilon)\}.
\end{equation}
$ \bar{P}_\varepsilon $ and  $ \bar{Q}_\varepsilon $ describe the lower bound and upper bound of the distance between the origin and $ {supp}(\omega_{{\varepsilon}}) $, respectively. Let $ P_\varepsilon, Q_\varepsilon $ be two points in $ {supp}(\omega_\varepsilon) $ such that
\begin{equation*}
|P_\varepsilon|=\bar{P}_\varepsilon,\ \ \  \text{and}\ \ \  |Q_\varepsilon|=\bar{Q}_\varepsilon.
\end{equation*}

We now prove that  the support set of the maximizer $ \omega_\varepsilon $ constructed above
must be concentrated as $\varepsilon$ tends to zero. We reach our goal by several steps as follows.
\begin{lemma}\label{le6-1}
	$\lim\limits_{\varepsilon\to 0^+}\bar{P}_\varepsilon=r_*$.
\end{lemma}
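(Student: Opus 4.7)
The strategy is to show that the point $P_\varepsilon\in \mathrm{supp}(\omega_\varepsilon)$ realizing $|P_\varepsilon|=\bar P_\varepsilon$ must asymptotically maximize the radial function $Y$ of \eqref{def of Y}. Since by the choice of $\alpha$ in \eqref{alpha} the scalar function $\bar Y(s)=\tfrac{d\sqrt{k^2+s}}{2\pi k}-\alpha s$ is strictly concave on $[0,(R^*)^2]$ with unique maximum at $s=r_*^2$ (Lemma \ref{Y max}), this will force $\bar P_\varepsilon\to r_*$.

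The heart of the argument is the sharp pointwise upper bound
\[
\mathcal{G}_{K_H}\omega_\varepsilon(x)\le \frac{d}{2\pi\sqrt{\det K_H(x)}}\ln\frac{1}{\varepsilon}+C\quad \forall\,x\in\mathrm{supp}(\omega_\varepsilon),
\]
with $C$ independent of $\varepsilon$. I would derive this from the Green's function decomposition in Proposition \ref{expe of G_H}. The regular part $S_{K_H}$ is bounded, so it contributes $O(1)$ via $\int_\Omega \omega_\varepsilon\,dy=d$. For the singular part I split at a fixed small radius $\delta$: on $\Omega\setminus B_\delta(x)$ the kernel is bounded, and since $|\mathrm{supp}(\omega_\varepsilon)|=d\varepsilon^2$ and $\omega_\varepsilon\le\varepsilon^{-2}$, this piece is also $O(1)$. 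Inside $B_\delta(x)$, smoothness of $T$ and of $\sqrt{\det K_H}^{-1}$ lets me replace the symmetric kernel $\tfrac{\sqrt{\det K_H(x)}^{-1}+\sqrt{\det K_H(y)}^{-1}}{2}\Gamma\bigl(\tfrac{T_x+T_y}{2}(x-y)\bigr)$ by $\sqrt{\det K_H(x)}^{-1}\Gamma(T_x(x-y))$ with error $O(|x-y||\ln|x-y||)$, whose integral against the density is $o(1)$. The leading term is then handled by the volume-preserving change of variable $z=T_x(x-y)$ combined with a rearrangement inequality: the image set has area at most $d\varepsilon^2/\sqrt{\det K_H(x)}$, and the integral of $\Gamma(z)$ over such a set is maximized by a centered disk, yielding the explicit value $\tfrac{d\varepsilon^2}{2\pi\sqrt{\det K_H(x)}}\ln\tfrac{1}{\varepsilon}+O(\varepsilon^2)$.

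Evaluating this upper bound at $x=P_\varepsilon$, combining with the pointwise lower bound $\psi^\varepsilon(P_\varepsilon)\ge 0$ (which holds by Proposition \ref{exist of max} and the continuity of $\psi^\varepsilon$ inherited from elliptic regularity), Lemma \ref{lowbdd of mu}, and the trivial estimate $\int_\Omega|y|^2\omega_\varepsilon\,dy\ge d\bar P_\varepsilon^2$, I would obtain
\[
\frac{d}{2\pi\sqrt{\det K_H(P_\varepsilon)}}\ln\frac{1}{\varepsilon}+C\ge \alpha\bar P_\varepsilon^2\ln\frac{1}{\varepsilon}+Y((r_*,0))\ln\frac{1}{\varepsilon}-C.
\]
Dividing by $\ln(1/\varepsilon)$ gives $Y(P_\varepsilon)\ge Y((r_*,0))+o(1)$. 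Extracting any convergent subsequence $\bar P_{\varepsilon_n}\to \bar P$ and using the continuity of $Y$ together with the uniqueness in Lemma \ref{Y max} then forces $\bar P=r_*$, so $\bar P_\varepsilon\to r_*$.

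The principal difficulty lies in obtaining the correct constant $\tfrac{d}{2\pi\sqrt{\det K_H(x)}}$ in the upper bound; this demands careful symmetrization of the kernel about $x$ and fine control of the resulting error, and is precisely where the explicit structure of $G_{K_H}$ supplied by Theorem \ref{expe of G_K} plays a decisive role. Once the sharp constant is in hand, the rest is elementary algebra combined with the strict concavity of $\bar Y$.
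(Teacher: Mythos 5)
Your proof is correct and follows the same overall strategy as the paper's: evaluate $\mathcal{G}_{K_H}\omega_\varepsilon(P_\varepsilon)-\frac{\alpha|P_\varepsilon|^2}{2}\ln\frac{1}{\varepsilon}\ge\mu^\varepsilon$ at a point of the support, estimate the potential via the decomposition of Proposition \ref{expe of G_H} plus a rearrangement argument, insert the lower bound for $\mu^\varepsilon$ from Lemma \ref{lowbdd of mu} together with $\int_\Omega|x|^2\omega_\varepsilon\,dx\ge d\bar P_\varepsilon^2$, divide by $\ln(1/\varepsilon)$, and conclude from Lemma \ref{Y max}. The only genuine difference is how the potential is estimated: the paper splits at the mesoscale $|T_{x_\varepsilon}(y-x_\varepsilon)|\le\varepsilon^\gamma$, which leaves the extra far-field term $\frac{\sqrt{\det K_H((R^*,0))}^{-1}d\gamma}{2\pi}\ln\frac{1}{\varepsilon}$ in \eqref{214} and forces the final limit $\gamma\to0$, whereas you split at a fixed radius $\delta$, so the far field is $O(1)$ outright and the bathtub bound under the area constraint $|\mathrm{supp}(\omega_\varepsilon)|=d\varepsilon^2$ yields the sharp estimate $\mathcal{G}_{K_H}\omega_\varepsilon(x)\le\frac{d}{2\pi\sqrt{\det K_H(x)}}\ln\frac{1}{\varepsilon}+O(1)$ in one stroke; this is in fact slightly stronger than what the paper uses at this stage (a comparably sharp pointwise bound only appears later, in the spirit of Lemmas \ref{le10} and \ref{sharp}, after the diameter estimate), and it removes the $\gamma\to0$ step. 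Two cosmetic points: the change of variables $z=T_x(x-y)$ is not volume-preserving (its Jacobian is $1/\sqrt{\det K_H(x)}$), though you do track the Jacobian correctly when asserting that the image set has area at most $d\varepsilon^2/\sqrt{\det K_H(x)}$; and the kernel-replacement error on $B_\delta(x)$ can only be claimed $O(1)$ (a bounded kernel integrated against mass $d$) rather than $o(1)$, since concentration of the support is not yet known at this point, but $O(1)$ is all your argument needs.
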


\begin{proof}
Let  $\gamma\in(0,1)$. By Proposition \ref{exist of max}, for any  $x_\varepsilon\in {supp}(\omega_\varepsilon)$, we have $$\mathcal{G}_{K_H}\omega_\varepsilon(x_\varepsilon)-\frac{\alpha|x_\varepsilon|^2}{2}\ln{\frac{1}{\varepsilon}}\ge \mu^{{\varepsilon}}.$$
Note that
	\begin{equation*}
	\begin{split}
	\mathcal{G}_{K_H}\omega_\varepsilon(x_\varepsilon)&= \int_{\Omega}G_{K_H}(x_\varepsilon,y)\omega_\varepsilon(y)dy\\
	&=\left (\int_{\Omega\cap\{|T_{x_\varepsilon}(y-x_\varepsilon)|>\varepsilon^\gamma\}}+\int_{\Omega\cap\{|T_{x_\varepsilon}(y-x_\varepsilon)|\le\varepsilon^\gamma\}}\right )G_{K_H}(x_\varepsilon,y)\omega_\varepsilon(y)dy.
	\end{split}
	\end{equation*}

On the one hand, it follows from Proposition \ref{expe of G_H} that $  \max_{x,y\in\Omega}S_{K_H}(x,y)\le C $, which implies that $ \int_\Omega S_{K_H}(x_\varepsilon,y)\omega_\varepsilon(y)dy\leq C $.  Direct computation shows that
\begin{equation*}
\sqrt{\det K_H(y)}^{-1}\leq \sqrt{\det K_H((R^*,0))}^{-1} \ \ \ \ \ \ \ \ \forall\ y\in\Omega,
\end{equation*}
\begin{equation*}
\bigg|\frac{T_{x_\varepsilon}+T_y}{2}(y-x_\varepsilon)\bigg|\geq C\varepsilon^\gamma\ \ \  \ \ \ \ \ \forall\ |T_{x_\varepsilon}(y-x_\varepsilon)|>\varepsilon^\gamma.
\end{equation*}
Thus by the decomposition  of $ G_{K_H} $ in Proposition \ref{expe of G_H}, we have
	\begin{equation}\label{212}
	\begin{split}
&\int_{\Omega\cap\{|T_{x_\varepsilon}(y-x_\varepsilon)|>\varepsilon^\gamma\}}G_{K_H}(x_\varepsilon,y)\omega_\varepsilon(y)dy\\
&\ \ \leq\int\limits_{\Omega\cap\{|T_{x_\varepsilon}(y-x_\varepsilon)|>\varepsilon^\gamma\}}\frac{\sqrt{\det K_H(x_\varepsilon)}^{-1}+\sqrt{\det K_H(y)}^{-1}}{2}\Gamma\left (\frac{T_{x_\varepsilon}+T_y}{2}\left (x_\varepsilon-y\right )\right )\omega_\varepsilon(y)dy+C\\
&\ \ \le \frac{\sqrt{\det K_H((R^*,0))}^{-1}}{2\pi} \int\limits_{\Omega\cap\{|T_{x_\varepsilon}(y-x_\varepsilon)|>\varepsilon^\gamma\}}\ln\frac{1}{\varepsilon^\gamma}\omega_\varepsilon(y)dy+C\\
&\ \ \le \frac{\sqrt{\det K_H((R^*,0))}^{-1}d\gamma}{2\pi}\ln\frac{1}{\varepsilon}+C.
	\end{split}
	\end{equation}
On the other hand, by the smoothness and positive-definiteness of $ K_H $, it is not hard to get that
\begin{equation*}
\sqrt{\det K_H(y)}^{-1}\leq \sqrt{\det K_H(x_\varepsilon)}^{-1}+C\varepsilon^\gamma,\ \ \ \ \forall\ y\in\Omega\cap\{|T_{x_\varepsilon}(y-x_\varepsilon)|\leq\varepsilon^\gamma\}.
\end{equation*}
Hence we get
\begin{equation*}
\begin{split}
&\int_{\Omega\cap\{|T_{x_\varepsilon}(y-x_\varepsilon)|\leq\varepsilon^\gamma\}}G_{K_H}(x_\varepsilon,y)\omega_\varepsilon(y)dy\\
&\ \ \leq \left (\sqrt{\det K_H(x_\varepsilon)}^{-1}+C\varepsilon^\gamma\right )\int\limits_{\Omega\cap\{|T_{x_\varepsilon}(y-x_\varepsilon)|\leq\varepsilon^\gamma\}}\Gamma\left (\frac{T_{x_\varepsilon}+T_y}{2}\left (x_\varepsilon-y\right )\right )\omega_\varepsilon(y)dy+C\\
&\ \ \leq \left (\sqrt{\det K_H(x_\varepsilon)}^{-1}+C\varepsilon^\gamma\right )\int\limits_{\Omega\cap\{|T_{x_\varepsilon}(y-x_\varepsilon)|\leq\varepsilon^\gamma\}}\Gamma\left (T_{x_\varepsilon}\left (x_\varepsilon-y\right )\right )\omega_\varepsilon(y)dy+C\\
&=\ \ \left (\sqrt{\det K_H(x_\varepsilon)}^{-1}+C\varepsilon^\gamma\right )\int_{|z|\leq\varepsilon^\gamma}\Gamma(z)\omega_\varepsilon(T_{x_\varepsilon}^{-1}z+x_\varepsilon)\sqrt{\det K_H(x_\varepsilon)}dz+C.
\end{split}
\end{equation*}

Denote $ \omega_\varepsilon^*(z)=\frac{1}{\varepsilon^2}\textbf{1}_{B_{t_\varepsilon}(0)} $ the non-negative decreasing rearrangement function of $ \omega_\varepsilon(T_{x_\varepsilon}^{-1}z+x_\varepsilon)\textbf{1}_{\{|z|\leq \varepsilon^\gamma\}} $. Here we choose $ t_\varepsilon>0 $ such that $ \int\limits_{|z|\leq\varepsilon^\gamma} \omega_\varepsilon(T_{x_\varepsilon}^{-1}z+x_\varepsilon) dz=\int\limits_{\mathbb{R}^2} \omega_\varepsilon^* dz $. By the rearrangement inequality, we have
\begin{equation*}
\begin{split}
\int_{|z|\leq\varepsilon^\gamma}\Gamma(z)\omega_\varepsilon(T_{x_\varepsilon}^{-1}z+x_\varepsilon)dz\le&\int_{\mathbb{R}^2} \Gamma(z)\omega_\varepsilon^*(z) dz\\
\leq&\frac{1}{2\pi}\ln\frac{1}{\varepsilon}\int_{|z|\leq\varepsilon^\gamma}\omega_\varepsilon(T_{x_\varepsilon}^{-1}z+x_\varepsilon) dz+C,
\end{split}
\end{equation*}
from which we deduce that
\begin{equation}\label{213}
	\begin{split}
&\int_{\Omega\cap\{|T_{x_\varepsilon}(y-x_\varepsilon)|\leq\varepsilon^\gamma\}}G_{K_H}(x_\varepsilon,y)\omega_\varepsilon(y)dy\\
&\ \ \le\left (\sqrt{\det K_H(x_\varepsilon)}^{-1}+C\varepsilon^\gamma\right )\frac{1}{2\pi}\ln\frac{1}{\varepsilon}\int\limits_{|z|\leq\varepsilon^\gamma}\omega_\varepsilon(T_{x_\varepsilon}^{-1}z+x_\varepsilon)\sqrt{\det K_H(x_\varepsilon)}dz+C\\
&\ \ \le	 \left (\sqrt{\det K_H(x_\varepsilon)}^{-1}+C\varepsilon^\gamma\right )\frac{1}{2\pi}\ln\frac{1}{\varepsilon}\int\limits_{\Omega\cap\{|T_{x_\varepsilon}(y-x_\varepsilon)|\leq\varepsilon^\gamma\}}\omega_\varepsilon dy+C\\
&\ \ \le	  \frac{\sqrt{\det K_H(x_\varepsilon)}^{-1}}{2\pi}\ln\frac{1}{\varepsilon}\int\limits_{\Omega\cap\{|T_{x_\varepsilon}(y-x_\varepsilon)|\leq\varepsilon^\gamma\}}\omega_\varepsilon dy+C.
	\end{split}
	\end{equation}
Therefore by Lemma \ref{lowbdd of mu}, \eqref{212} and \eqref{213}, we conclude that for any $x_\varepsilon\in supp(\omega_\varepsilon)$ and $ \gamma\in(0,1) $, there holds
	\begin{equation}\label{210}
	\begin{split}
&\frac{\sqrt{\det K_H(x_\varepsilon)}^{-1}}{2\pi}\ln\frac{1}{\varepsilon} \int\limits_{\Omega\cap\{|T_{x_\varepsilon}(y-x_\varepsilon)|\leq\varepsilon^\gamma\}}\omega_\varepsilon dy+\frac{\sqrt{\det K_H((R^*,0))}^{-1}d\gamma}{2\pi}\ln\frac{1}{\varepsilon}-\frac{\alpha|x_\varepsilon|^2}{2}\ln{\frac{1}{\varepsilon}} \\
&\ \ \ge  Y((r_*,0))\ln\frac{1}{\varepsilon}+ \frac{\alpha}{2d}\ln\frac{1}{\varepsilon}\int_\Omega |x|^2 \omega_\varepsilon dx-C.
	\end{split}
	\end{equation}
Dividing both sides of the above inequality by $\ln\frac{1}{\varepsilon}$, we obtain
	\begin{equation}\label{214}
	\begin{split}
Y(x_\varepsilon)\ge& \frac{\sqrt{\det K_H(x_\varepsilon)}^{-1}}{2\pi}\int\limits_{\Omega\cap\{|T_{x_\varepsilon}(y-x_\varepsilon)|\leq\varepsilon^\gamma\}}\omega_\varepsilon dy-\alpha|x_\varepsilon|^2\\
\ge &  Y((r_*,0))+\frac{\alpha}{2d}\left \{\int_\Omega |x|^2 \omega_\varepsilon dx-d|x_\varepsilon|^2\right \}-\frac{\sqrt{\det K_H((R^*,0))}^{-1}d\gamma}{2\pi}-\frac{C}{\ln\frac{1}{\varepsilon}}.
	\end{split}
	\end{equation}
	Note that
	\begin{equation*}
\int_\Omega |x|^2 \omega_\varepsilon dx \ge d (\bar{P}_\varepsilon) ^2.
	\end{equation*}
	Taking $ x_\varepsilon =P_\varepsilon$ and letting $\varepsilon$ tend to $0^+$, we deduce from $\eqref{214}$ that
	\begin{equation}\label{215}
	\liminf_{\varepsilon\to 0^+}Y(P_\varepsilon)\ge Y((r_*,0))-\sqrt{\det K_H((R^*,0))}^{-1}d\gamma/(2\pi).
	\end{equation}
	Hence we get the desired result by letting $\gamma \to 0$ and using Lemma \ref{Y max}.
\end{proof}

Next, we estimate the moment of inertia $ \mathcal{I}(\omega_\varepsilon) $.
\begin{lemma} \label{le6-2}
	\begin{equation}\label{111}
	\lim_{\varepsilon\to 0^+}\mathcal{I}(\omega_\varepsilon)=\frac{1}{2}\lim_{\varepsilon\to 0^+}\int_\Omega|x|^2\omega_\varepsilon dx= \frac{d}{2} r_*^2.
	\end{equation}
	As a consequence, for any $\eta>0$, there holds
	\begin{equation}\label{112}
	\lim_{\varepsilon\to 0^+}\int_{\Omega\cap\{x\mid |x|\ge r_*+\eta\}}\omega_\varepsilon dx=0.
	\end{equation}
\end{lemma}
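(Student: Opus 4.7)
The plan is to exploit the inequality \eqref{214} established in the proof of Lemma \ref{le6-1}, which already contains the moment of inertia on its right-hand side, and then squeeze the quantity $\int_{\Omega}|x|^2\omega_\varepsilon dx$ between two matching asymptotic bounds. The second statement will then follow by a routine partition argument.

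For the lower bound, I would use the trivial estimate from the definition of $\bar{P}_\varepsilon$: since $|x|\geq \bar{P}_\varepsilon$ on $\mathrm{supp}(\omega_\varepsilon)$ and $\int_\Omega\omega_\varepsilon dx=d$, one has
\begin{equation*}
\int_\Omega |x|^2\omega_\varepsilon dx\ \geq\ d\,\bar{P}_\varepsilon^{\,2},
\end{equation*}
so Lemma \ref{le6-1} yields $\liminf_{\varepsilon\to 0^+}\int_\Omega |x|^2\omega_\varepsilon dx\geq dr_*^2$.

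For the upper bound, I would specialize \eqref{214} to $x_\varepsilon=P_\varepsilon$ and rearrange to isolate $\int_\Omega|x|^2\omega_\varepsilon dx$:
\begin{equation*}
\frac{\alpha}{2d}\int_\Omega |x|^2\omega_\varepsilon dx\ \leq\ Y(P_\varepsilon)-Y((r_*,0))+\frac{\alpha}{2}|P_\varepsilon|^2+\frac{\sqrt{\det K_H((R^*,0))}^{-1}d\gamma}{2\pi}+\frac{C}{\ln\frac{1}{\varepsilon}}.
\end{equation*}
Lemma \ref{le6-1} together with the continuity of $Y$ (and Lemma \ref{Y max}) shows that $Y(P_\varepsilon)\to Y((r_*,0))$ and $|P_\varepsilon|^2\to r_*^2$. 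Letting $\varepsilon\to 0^+$ first and then $\gamma\to 0^+$, I obtain $\limsup_{\varepsilon\to 0^+}\int_\Omega|x|^2\omega_\varepsilon dx\leq dr_*^2$, which combined with the liminf estimate proves \eqref{111}.

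For \eqref{112}, I set $m_\varepsilon:=\int_{\Omega\cap\{|x|\geq r_*+\eta\}}\omega_\varepsilon dx$ and split the integral:
\begin{equation*}
\int_\Omega |x|^2\omega_\varepsilon dx\ =\ \int_{|x|\geq r_*+\eta}|x|^2\omega_\varepsilon dx+\int_{|x|<r_*+\eta}|x|^2\omega_\varepsilon dx\ \geq\ (r_*+\eta)^2 m_\varepsilon+\bar{P}_\varepsilon^{\,2}(d-m_\varepsilon),
\end{equation*}
using again $|x|\geq\bar{P}_\varepsilon$ on $\mathrm{supp}(\omega_\varepsilon)$. Rearranging gives
\begin{equation*}
m_\varepsilon\ \leq\ \frac{\int_\Omega |x|^2\omega_\varepsilon dx-\bar{P}_\varepsilon^{\,2}d}{(r_*+\eta)^2-\bar{P}_\varepsilon^{\,2}},
\end{equation*}
and since by Lemma \ref{le6-1} and \eqref{111} the numerator tends to $0$ while the denominator tends to $(r_*+\eta)^2-r_*^2>0$, I conclude $m_\varepsilon\to 0$, proving \eqref{112}. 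The main technical point is really the observation that the inequality already proved in \eqref{214} contains exactly the correct quadratic moment with the correct constant; once this is noted, the rest is a direct computation.
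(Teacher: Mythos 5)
Your proposal is correct and follows essentially the same route as the paper: both take $x_\varepsilon=P_\varepsilon$ in \eqref{214}, combine it with the trivial bound $\int_\Omega|x|^2\omega_\varepsilon dx\ge d\bar{P}_\varepsilon^2$ and Lemma \ref{le6-1} to squeeze the moment of inertia (the paper bounds $Y(P_\varepsilon)\le Y((r_*,0))$ where you invoke continuity of $Y$, an immaterial difference), and then deduce \eqref{112}; your partition argument simply makes explicit the step the paper dismisses as immediate.
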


\begin{proof}
Taking $ x_\varepsilon =P_\varepsilon$ in $\eqref{214}$, we know that for any $\gamma\in(0,1)$,
	\begin{equation*}
	0\le  \liminf_{\varepsilon\to 0^+}\left [\int_\Omega |x|^2 \omega_\varepsilon dx - d (\bar{P}_\varepsilon) ^2\right ]\le  \limsup_{\varepsilon\to 0^+}\left [\int_\Omega |x|^2 \omega_\varepsilon dx - d (\bar{P}_\varepsilon) ^2\right ]\le \frac{\sqrt{\det K_H((R^*,0))}^{-1}d^2\gamma}{\alpha\pi}.
	\end{equation*}
	Combining this with Lemma \ref{le6-1} we get
	\begin{equation*}
	\lim_{\varepsilon\to 0^+}\int_\Omega |x|^2 \omega_\varepsilon dx   =\lim_{\varepsilon\to 0^+}d (\bar{P}_\varepsilon) ^2  =d r_*^2.
	\end{equation*}
	\eqref{112} follows immediately from $\eqref{111}$ and Lemma \ref{le6-1}.
\end{proof}

The above lemma  shows that  the   moment of inertia of $ \omega_{\varepsilon} $ will tend to $ dr_*^2/2  $ and on the set $ \Omega\cap\{x\mid |x|\ge r_*+\eta\} $ for any $ \eta>0 $, the circulation of $ \omega_{\varepsilon} $ tends to 0 as $ \varepsilon\to 0^+ $. Based on this estimate, we can then get limits of $ \bar{Q}_\varepsilon. $
\begin{lemma}\label{le6-4}
	$\lim\limits_{\varepsilon\to 0^+}\bar{Q}_\varepsilon=r_*$.
\end{lemma}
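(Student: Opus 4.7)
The plan is to proceed by contradiction, upgrading the Green's function estimate used in the proof of Lemma \ref{le6-1} by plugging in the concentration statement \eqref{112} just obtained. Assume for contradiction that $\limsup_{\varepsilon\to 0^+}\bar Q_\varepsilon>r_*$; passing to a subsequence, there exists a fixed $\eta_0>0$ such that $|Q_\varepsilon|\geq r_*+2\eta_0$ for all small $\varepsilon$. Since $Q_\varepsilon\in\mathrm{supp}(\omega_\varepsilon)$, Proposition \ref{exist of max} gives the starting inequality
\begin{equation*}
\mathcal{G}_{K_H}\omega_\varepsilon(Q_\varepsilon)\geq \mu^\varepsilon+\tfrac{\alpha}{2}|Q_\varepsilon|^2\ln\tfrac{1}{\varepsilon}.
\end{equation*}
A naive repetition of \eqref{212}--\eqref{214} with $x_\varepsilon=Q_\varepsilon$ would be insufficient here, because it only produces an inequality of the form $Z(|Q_\varepsilon|)\geq Z(r_*)-o(1)$ with $Z(r):=\frac{d\sqrt{k^2+r^2}}{2\pi k}-\frac{\alpha r^2}{2}$, and $Z'(r_*)=\alpha r_*>0$ so that $Z$ is strictly increasing at $r_*$ and does not force $|Q_*|=r_*$.

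I therefore sharpen the upper bound on $\mathcal{G}_{K_H}\omega_\varepsilon(Q_\varepsilon)$ by splitting $\Omega$ into three disjoint pieces
\begin{equation*}
\{|T_{Q_\varepsilon}(y-Q_\varepsilon)|\leq\varepsilon^\gamma\},\quad \{|T_{Q_\varepsilon}(y-Q_\varepsilon)|>\varepsilon^\gamma\}\cap B_{r_*+\eta_0}(0),\quad \Omega\setminus\bigl(B_{r_*+\eta_0}(0)\cup\{|T_{Q_\varepsilon}(y-Q_\varepsilon)|\leq\varepsilon^\gamma\}\bigr),
\end{equation*}
for a fixed $\gamma\in(0,1)$. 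Since $|T_{Q_\varepsilon}^{-1}|$ is uniformly bounded in $\varepsilon$ and $|Q_\varepsilon|\geq r_*+2\eta_0$, the first piece sits inside $\{|y|\geq r_*+\eta_0\}$ for small $\varepsilon$, so by \eqref{112} its $\omega_\varepsilon$-mass is $o(1)$, and rerunning the rearrangement bound \eqref{213} there produces a contribution of $o(\ln\tfrac{1}{\varepsilon})$. The third piece also has $\omega_\varepsilon$-mass $o(1)$ by \eqref{112}, so even with the crude bound $\sqrt{\det K_H(y)}^{-1}\leq\sqrt{\det K_H((R^*,0))}^{-1}$ its contribution is $o(\ln\tfrac{1}{\varepsilon})$ as well. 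On the middle piece I use $\sqrt{\det K_H(y)}^{-1}=\sqrt{k^2+|y|^2}/k\leq\sqrt{\det K_H((r_*+\eta_0,0))}^{-1}$, total mass $\leq d$, and $\Gamma\leq\frac{\gamma}{2\pi}\ln\tfrac{1}{\varepsilon}+C$ on $\{|T_{Q_\varepsilon}(y-Q_\varepsilon)|>\varepsilon^\gamma\}$, to obtain
\begin{equation*}
\mathcal{G}_{K_H}\omega_\varepsilon(Q_\varepsilon)\leq \frac{d\gamma}{4\pi}\bigl(\sqrt{\det K_H(Q_\varepsilon)}^{-1}+\sqrt{\det K_H((r_*+\eta_0,0))}^{-1}\bigr)\ln\tfrac{1}{\varepsilon}+o(\ln\tfrac{1}{\varepsilon}).
\end{equation*}

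Combining this with Lemma \ref{lowbdd of mu} and \eqref{111}, via the identity $Y((r_*,0))+\frac{\alpha r_*^2}{2}=\frac{d}{2\pi}\sqrt{\det K_H((r_*,0))}^{-1}-\frac{\alpha r_*^2}{2}$ (so that $\mu^\varepsilon\geq\bigl(\frac{d}{2\pi}\sqrt{\det K_H((r_*,0))}^{-1}-\frac{\alpha r_*^2}{2}\bigr)\ln\tfrac{1}{\varepsilon}+o(\ln\tfrac{1}{\varepsilon})$), then dividing by $\ln\tfrac{1}{\varepsilon}$, passing to a subsequential limit $|Q_\varepsilon|\to|Q_*|\geq r_*+2\eta_0$, and finally sending $\gamma\to 0^+$, I reach
\begin{equation*}
\tfrac{\alpha}{2}(|Q_*|^2-r_*^2)\leq -\tfrac{d}{2\pi}\sqrt{\det K_H((r_*,0))}^{-1}<0,
\end{equation*}
which contradicts the trivial lower bound $\frac{\alpha}{2}(|Q_*|^2-r_*^2)\geq 2\alpha(r_*\eta_0+\eta_0^2)>0$. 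This forces $\bar Q_\varepsilon\to r_*$, and together with Lemma \ref{le6-1} and $\bar P_\varepsilon\leq\bar Q_\varepsilon$ completes the proof. The main obstacle is precisely this sharpening of \eqref{212}: only after restricting the dominant contribution to the region $B_{r_*+\eta_0}(0)$ by means of \eqref{112} does the right-hand side become strictly negative in the $\gamma\to 0$ limit, so that the crude bound used in Lemma \ref{le6-1} alone is not enough.
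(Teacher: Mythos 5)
Your argument is correct, and it uses exactly the same ingredients as the paper's proof of this lemma: the inequality $\mathcal{G}_{K_H}\omega_\varepsilon(Q_\varepsilon)\ge\mu^\varepsilon+\frac{\alpha}{2}|Q_\varepsilon|^2\ln\frac{1}{\varepsilon}$ at a farthest support point, the near/far splitting of the Green's function from Proposition \ref{expe of G_H} together with the rearrangement bound as in \eqref{213}, the lower bound on $\mu^\varepsilon$ from Lemma \ref{lowbdd of mu} combined with \eqref{111}, and the concentration statement \eqref{112} --- but you run the contradiction in the opposite direction. The paper keeps the near-field mass term in \eqref{214}, uses $\liminf\bar Q_\varepsilon\ge r_*$ and \eqref{111} to conclude that a fixed positive fraction of the vorticity must lie in $\{|T_{Q_\varepsilon}(y-Q_\varepsilon)|\le\varepsilon^\gamma\}$ for suitably small $\gamma$, and then notes that this set sits inside $\{|y|\ge r_*+\eta_0\}$ if $\limsup\bar Q_\varepsilon>r_*$, contradicting \eqref{112}; you instead assume $|Q_\varepsilon|\ge r_*+2\eta_0$, feed \eqref{112} into the same splitting so that the near-field (and the far-from-$B_{r_*+\eta_0}(0)$) contributions become $o(\ln\frac{1}{\varepsilon})$, and reach a sign contradiction after dividing by $\ln\frac{1}{\varepsilon}$ and sending $\gamma\to0^+$. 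Both routes work; yours yields the slightly more quantitative statement $\frac{\alpha}{2}(|Q_*|^2-r_*^2)\le-\frac{d}{2\pi}\sqrt{\det K_H((r_*,0))}^{-1}$, although the three-region decomposition and the sharper coefficient $\sqrt{\det K_H((r_*+\eta_0,0))}^{-1}$ on the middle region are not actually needed: the crude bound $\sqrt{\det K_H((R^*,0))}^{-1}$ already used in \eqref{212} makes the right-hand side strictly negative once $\gamma\to0^+$. Your preliminary remark is also accurate: bounding the local mass simply by $d$ in \eqref{214} only gives $Z(|Q_\varepsilon|)\ge Z(r_*)-o(1)$ with $Z'(r_*)=\alpha r_*>0$, which is inconclusive --- this is precisely why the paper retains the local mass term rather than discarding it, so your ``sharpening'' and the paper's retained mass term are two equivalent ways of exploiting the same extra information.
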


\begin{proof}
	Clearly, $ \liminf\limits_{\varepsilon\to 0^+}\bar{Q}_\varepsilon\geq\lim\limits_{\varepsilon\to 0^+}\bar{P}_\varepsilon =r_*  $.  Note that  $$ \sqrt{\det K_H(x_\varepsilon)}^{-1}\leq \sqrt{\det K_H((R^*,0))}^{-1}<+\infty. $$ Taking $x_\varepsilon =Q_\varepsilon $ in \eqref{214}, we obtain
	\begin{equation*}
	\begin{split}
&\frac{\sqrt{\det K_H((R^*,0))}^{-1}}{2\pi}\liminf_{\varepsilon\to 0^+}\int\limits_{\Omega\cap\{|T_{Q_\varepsilon}(y-Q_\varepsilon)|\leq\varepsilon^\gamma\}}\omega_\varepsilon dy\\
&\ \ \ge  Y((r_*,0))+ \frac{\alpha}{2}\liminf_{\varepsilon\to 0^+}(\bar{Q}_\varepsilon)^2+\frac{ \alpha r_*^2}{2}-\frac{\sqrt{\det K_H((R^*,0))}^{-1}d\gamma}{2\pi}  \\
&\ \ \ge	 \frac{d \sqrt{\det K_H((r_*,0))}^{-1}}{2\pi}-\frac{\sqrt{\det K_H((R^*,0))}^{-1}d\gamma}{2\pi}.
	\end{split}
	\end{equation*}
Hence
	\begin{equation*}
\liminf_{\varepsilon\to 0^+}\int\limits_{\Omega\cap\{|T_{Q_\varepsilon}(y-Q_\varepsilon)|\leq\varepsilon^\gamma\}}\omega_\varepsilon dy \ge \frac{d \sqrt{\det K_H((r_*,0))}^{-1}}{\sqrt{\det K_H((R^*,0))}^{-1}}-d\gamma.
	\end{equation*}
So $ \liminf\limits_{\varepsilon\to 0^+}\int\limits_{\Omega\cap\{|T_{Q_\varepsilon}(y-Q_\varepsilon)|\leq\varepsilon^\gamma\}}\omega_\varepsilon dy \ge C_0>0 $ if we choose $ \gamma $  such that $ 0<\gamma<\frac{ \sqrt{\det K_H((r_*,0))}^{-1}}{\sqrt{\det K_H((R^*,0))}^{-1}}. $	

We claim that $ \limsup\limits_{\varepsilon\to 0^+}\bar{Q}_\varepsilon\leq r_*. $ If not, there exists a sequence $ \{\bar{Q}_{\varepsilon,n}\} $ (still denoted by $ \{\bar{Q}_{\varepsilon}\} $ for simplicity) such that $ \lim\limits_{\varepsilon\to 0^+}\bar{Q}_\varepsilon> r_* $. Note that $ T_{Q_\varepsilon} $ is a positive-definite matrix whose eigenvalues have positive upper and lower bounds uniformly about $ \varepsilon. $ Then we find  $ \eta_0>0 $, such that $ \{|T_{Q_\varepsilon}(y-Q_\varepsilon)|\leq\varepsilon^\gamma\}\subset \{ |y|\ge r_*+\eta_0\} $ for $ \varepsilon $ sufficiently small. Thus $  \int\limits_{\{ |y|\ge r_*+\eta_0\}}\omega_\varepsilon dy \ge C_0, $ which contradicts to \eqref{112}. To conclude, we get 	$\lim\limits_{\varepsilon\to 0^+}\bar{Q}_\varepsilon=r_*$.
\end{proof}

From Lemma \ref{le6-1} and Lemma \ref{le6-4}, we know that $ supp(\omega_\varepsilon) $ locates in a narrow annular domain whose centerline is $ \partial B_{r_*}(0) $. That is, for any $ x\in supp(\omega_\varepsilon) $
\begin{equation*}
\lim_{\varepsilon\to 0^+}|x|=r_*.
\end{equation*}
The following lemma shows that the diameter of the support set of $ \omega_\varepsilon $  is the order of $ \varepsilon^{\gamma} $ for any $ \gamma\in(0, 1) $.
\begin{lemma}\label{diam1}$\mathrm{[Diameter~of}$ $ \text{supp}(\omega_\varepsilon)  $$\mathrm{]}$
For any number $\gamma\in(0, 1)$, there holds for $\varepsilon$   small enough
$$ diam\big({supp}(\omega_\varepsilon)\big) \le 2 \varepsilon^{\gamma}. $$
\end{lemma}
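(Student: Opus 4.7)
The plan is to upgrade the fundamental estimate \eqref{214} into a uniform pointwise concentration statement: for every $x_\varepsilon \in \text{supp}(\omega_\varepsilon)$, essentially the entire mass $d$ of $\omega_\varepsilon$ is contained in the thin ellipsoidal neighborhood $\{y \in \Omega : |T_{x_\varepsilon}(y - x_\varepsilon)| \le \varepsilon^{\gamma_1}\}$. Once such a statement is in hand, a contradiction argument shows that no two points of $\text{supp}(\omega_\varepsilon)$ can be farther apart than $2\varepsilon^\gamma$, since if they were, two essentially mass-full regions would be forced to be disjoint, producing total mass greater than $d$.

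\textbf{Step 1: pointwise concentration.} Fix any $x_\varepsilon \in \text{supp}(\omega_\varepsilon)$ and any $\gamma_1 \in (0,1)$. Apply \eqref{214} at $x_\varepsilon$ with $\gamma_1$ in place of $\gamma$. By Lemmas \ref{le6-1} and \ref{le6-4}, $|x_\varepsilon| \to r_*$ uniformly in $x_\varepsilon \in \text{supp}(\omega_\varepsilon)$; in particular $\sqrt{\det K_H(x_\varepsilon)}^{-1} \to \sqrt{\det K_H((r_*,0))}^{-1}$ and $Y(x_\varepsilon) \to Y((r_*,0))$. Combining with Lemma \ref{le6-2} and the algebraic identity
\[
Y((r_*,0)) + \alpha r_*^2 \;=\; \frac{d\,\sqrt{\det K_H((r_*,0))}^{-1}}{2\pi},
\]
inequality \eqref{214} rearranges to
\[
\int_{\{|T_{x_\varepsilon}(y-x_\varepsilon)|\le \varepsilon^{\gamma_1}\}} \omega_\varepsilon(y)\, dy \;\ge\; d \;-\; C_0\,\gamma_1 \;-\; o_\varepsilon(1),
\]
where $C_0 := d\,\sqrt{\det K_H((R^*,0))}^{-1}/\sqrt{\det K_H((r_*,0))}^{-1}$ is independent of $x_\varepsilon$ and $\varepsilon$, and the $o_\varepsilon(1)$ is uniform in $x_\varepsilon$.

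\textbf{Step 2: disjointness and contradiction.} Fix $\gamma \in (0,1)$ and choose $\gamma_1 \in (\gamma,1)$ so small that $C_0 \gamma_1 < d/3$. By the uniform positive-definiteness of $K_H$ (equivalently of $T_x T_x^t$), there exists $c>0$ with $|T_x v| \ge c|v|$ for all $x \in \overline{\Omega}$, $v \in \mathbb{R}^2$; hence $\{|T_{x_\varepsilon}(y-x_\varepsilon)|\le \varepsilon^{\gamma_1}\} \subset B_{\varepsilon^{\gamma_1}/c}(x_\varepsilon)$. Suppose for contradiction that $\text{diam}(\text{supp}(\omega_\varepsilon)) > 2\varepsilon^\gamma$ along some sequence $\varepsilon \to 0$, so there exist $x_1, x_2 \in \text{supp}(\omega_\varepsilon)$ with $|x_1 - x_2| > 2\varepsilon^\gamma$. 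Since $\gamma_1 > \gamma$, for $\varepsilon$ small enough $2\varepsilon^{\gamma_1}/c < 2\varepsilon^\gamma < |x_1 - x_2|$, so the two Euclidean balls $B_{\varepsilon^{\gamma_1}/c}(x_i)$, and in particular the corresponding ellipsoids, are disjoint. Applying Step 1 at each of $x_1, x_2$ and summing,
\[
d \;=\; \int_\Omega \omega_\varepsilon\, dy \;\ge\; \sum_{i=1,2} \int_{\{|T_{x_i}(y-x_i)|\le \varepsilon^{\gamma_1}\}} \omega_\varepsilon\, dy \;\ge\; 2(d - C_0\gamma_1) - o_\varepsilon(1) \;>\; d
\]
for all $\varepsilon$ sufficiently small, a contradiction.

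\textbf{Main obstacle.} The substantive work lies in Step 1: namely verifying that the constants produced by \eqref{214} and by the convergences in Lemmas \ref{le6-1}--\ref{le6-4} are genuinely uniform over the whole support of $\omega_\varepsilon$, not only at the extremal points $P_\varepsilon$ and $Q_\varepsilon$ where those lemmas were proved. This uniformity is however automatic: \eqref{214} holds pointwise at any $x_\varepsilon \in \text{supp}(\omega_\varepsilon)$ with error $C/\ln(1/\varepsilon)$ independent of $x_\varepsilon$, and the uniform localization $|x_\varepsilon| \to r_*$ already established, combined with continuity of $Y$ and of $\sqrt{\det K_H}^{-1}$, controls every $x_\varepsilon$-dependent quantity entering the estimate uniformly.
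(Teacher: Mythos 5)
Your Step 1 and the disjointness device are sound and coincide with the first half of the paper's argument (the paper phrases it as: it suffices to show $\int_{B_{\varepsilon^\gamma}(x_\varepsilon)}\omega_\varepsilon\,dx>d/2$ for every $x_\varepsilon\in\mathrm{supp}(\omega_\varepsilon)$, see \eqref{conc} and \eqref{216}). But there is a genuine gap in Step 2: your choice ``pick $\gamma_1\in(\gamma,1)$ with $C_0\gamma_1<d/3$'' is impossible once $\gamma$ is not small. Indeed $\det K_H(x)=k^2/(k^2+|x|^2)$, so $C_0=d\,\sqrt{\det K_H((r_*,0))}/\sqrt{\det K_H((R^*,0))}=d\sqrt{(k^2+(R^*)^2)/(k^2+r_*^2)}>d$, hence $d/(3C_0)<1/3$ (and even the weakest usable requirement $2(d-C_0\gamma_1)>d$ forces $\gamma_1<d/(2C_0)<1/2$). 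Since the disjointness of the two ellipsoids requires $\gamma_1\geq\gamma$, your argument proves the diameter bound only for $\gamma$ below a threshold strictly less than $1/2$, whereas the lemma asserts it for \emph{every} $\gamma\in(0,1)$; for, say, $\gamma=3/4$ the concentration estimate $d-C_0\gamma_1$ you would need is vacuous (it can even be negative). The loss $C_0\gamma$ comes from bounding $\sqrt{\det K_H(y)}^{-1}$ by its worst value $\sqrt{\det K_H((R^*,0))}^{-1}$ on the far region $\{|T_{x_\varepsilon}(y-x_\varepsilon)|>\varepsilon^\gamma\}$ in \eqref{212}, and it does not shrink as $\varepsilon\to0$.

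The missing idea is the paper's bootstrap. First, exactly as you do, one gets the statement for small $\gamma$, which already yields the crude bound $diam(\mathrm{supp}(\omega_\varepsilon))\le C/\ln\frac{1}{\varepsilon}$, see \eqref{4-7}. Knowing that every $y\in\mathrm{supp}(\omega_\varepsilon)$ is within $o(1)$ of $x_\varepsilon$, one returns to the far-field integral and improves it: there $\sqrt{\det K_H(y)}^{-1}=\sqrt{\det K_H(x_\varepsilon)}^{-1}+o(1)$ and $\Gamma\big(\tfrac{T_{x_\varepsilon}+T_y}{2}(x_\varepsilon-y)\big)\le\frac{\gamma}{2\pi}\ln\frac{1}{\varepsilon}+C$, so the outer contribution is at most $\frac{\sqrt{\det K_H(x_\varepsilon)}^{-1}\gamma}{2\pi}\ln\frac{1}{\varepsilon}$ times the \emph{outer} mass (not the full mass $d$). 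Feeding this back into the argument that produced \eqref{214} gives \eqref{4-8}, i.e. $(1-\gamma)\int_{\{|T_{x_\varepsilon}(y-x_\varepsilon)|\le\varepsilon^\gamma\}}\omega_\varepsilon\,dy\ge d(1-\gamma)-o(1)$ after using $|x_\varepsilon|\to r_*$, $\int|x|^2\omega_\varepsilon\,dx\to dr_*^2$ and $2\pi\sqrt{\det K_H((r_*,0))}\,(Y((r_*,0))+\alpha r_*^2)=d$. Hence the inner mass tends to $d$ for \emph{every} $\gamma\in(0,1)$, which restores your disjointness contradiction (with margin $d/2$) in the whole range of $\gamma$ and completes the proof. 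Without this self-improvement step your proposal establishes only a weaker version of Lemma \ref{diam1}.
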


\begin{proof}
Since $\int_{\Omega}\omega_\varepsilon dx=d, $ it suffices to prove that
	\begin{equation}\label{conc}
	\int\limits_{B_{\varepsilon^\gamma}(x_\varepsilon)}\omega_\varepsilon dx>d/2, \ \ \forall\, x_\varepsilon \in {supp}(\omega_\varepsilon).
	\end{equation}
For any $ x_\varepsilon\in {supp}(\omega_\varepsilon) $, by $\eqref{214}$  we get
\begin{equation}\label{4-6}
\begin{split}
\int\limits_{\Omega\cap\{|T_{x_\varepsilon}(y-x_\varepsilon)|\leq\varepsilon^\gamma\}}\omega_\varepsilon dy
 \ge  &2\pi\sqrt{\det K_H(x_\varepsilon)}\bigg[Y((r_*,0))+\frac{\alpha}{2d}\left \{\int_\Omega |x|^2 \omega_\varepsilon dx+d|x_\varepsilon|^2\right \}\\
&-\frac{\sqrt{\det K_H((R^*,0))}^{-1}d\gamma}{2\pi}-\frac{C}{\ln\frac{1}{\varepsilon}}\bigg].
\end{split}
\end{equation}	
From Lemmas \ref{le6-1} and \ref{le6-4}, we know that $|x_\varepsilon|\to r_*$  as $\varepsilon \to 0^+$. Taking this into \eqref{4-6}, we obtain
\begin{equation}\label{216}
	\liminf_{\varepsilon\to 0^+}\int\limits_{\{|T_{x_\varepsilon}(y-x_\varepsilon)|\leq\varepsilon^\gamma\}}\omega_\varepsilon dy \ge d-\frac{\sqrt{\det K_H((r_*,0))} d\gamma}{\sqrt{\det K_H((R^*,0))}}.
	\end{equation}
From the definition of $ T_{x_\varepsilon} $, one computes directly that
\begin{equation*}
\{y  \mid T_{x_\varepsilon}(y-x_\varepsilon)|\leq\varepsilon^\gamma\}\subset B_{\varepsilon^\gamma}(x_\varepsilon).
\end{equation*}
Thus we have
\begin{equation*}
	\liminf_{\varepsilon\to 0^+}\int\limits_{B_{\varepsilon^\gamma}(x_\varepsilon)}\omega_\varepsilon dy \ge d-\frac{\sqrt{\det K_H((r_*,0))} d\gamma}{\sqrt{\det K_H((R^*,0))}},
\end{equation*}
which implies \eqref{conc} for all small $\gamma$ such that $d-\frac{\sqrt{\det K_H((r_*,0))} d\gamma}{\sqrt{\det K_H((R^*,0))}}>\frac{d}{2}$. Thus we conclude that $ diam\big({supp}(\omega_\varepsilon)\big) \le 2 \varepsilon^{\gamma} $ for small $ \gamma $, from which we deduce that
\begin{equation}\label{4-7}
diam\big({supp}(\omega_\varepsilon)\big)\le \frac{C}{\ln\frac{1}{\varepsilon}}
\end{equation}
provided $\varepsilon$ is small enough.

Now we go back to $\eqref{212}$ and improve estimates of $ \int\limits_{\Omega\cap\{|T_{x_\varepsilon}(y-x_\varepsilon)|>\varepsilon^\gamma\}}G_{K_H}(x_\varepsilon,y)\omega_\varepsilon(y)dy $. From \eqref{4-7}, we get for any $ \gamma\in(0,1) $
\begin{equation*}
\begin{split}
&\int\limits_{\Omega\cap\{|T_{x_\varepsilon}(y-x_\varepsilon)|>\varepsilon^\gamma\}}G_{K_H}(x_\varepsilon,y)\omega_\varepsilon(y)dy\\
&\ \ \leq\int\limits_{\Omega\cap\{|T_{x_\varepsilon}(y-x_\varepsilon)|>\varepsilon^\gamma\}}\frac{\sqrt{\det K_H(x_\varepsilon)}^{-1}+\sqrt{\det K_H(y)}^{-1}}{2}\Gamma\left (\frac{T_{x_\varepsilon}+T_y}{2}\left (x_\varepsilon-y\right )\right )\omega_\varepsilon(y)dy+C\\
&\ \ \le	 \frac{\sqrt{\det K_H(x_\varepsilon)}^{-1}\gamma}{2\pi}\ln\frac{1}{\varepsilon} \int\limits_{\Omega\cap\{|T_{x_\varepsilon}(y-x_\varepsilon)|>\varepsilon^\gamma\}}\omega_\varepsilon dy+C.
\end{split}improve
\end{equation*}
Repeating the proof of \eqref{214}, we can improve $\eqref{214}$ as follows
\begin{equation*}
	\begin{split}
& \frac{\sqrt{\det K_H(x_\varepsilon)}^{-1}}{2\pi}\int\limits_{\Omega\cap\{|T_{x_\varepsilon}(y-x_\varepsilon)|\leq\varepsilon^\gamma\}}\omega_\varepsilon dy+\frac{\sqrt{\det K_H(x_\varepsilon)}^{-1}\gamma}{2\pi} \int\limits_{\Omega\cap\{|T_{x_\varepsilon}(y-x_\varepsilon)|>\varepsilon^\gamma\}}\omega_\varepsilon dy\\
&\ \ \ge   Y((r_*,0))+\frac{\alpha}{2d}\left \{\int_\Omega |x|^2 \omega_\varepsilon dx+d|x_\varepsilon|^2\right \}-\frac{C}{\ln\frac{1}{\varepsilon}},
\end{split}
\end{equation*}
which implies that
\begin{equation}\label{4-8}
\begin{split}
&(1-\gamma)\int\limits_{\Omega\cap\{|T_{x_\varepsilon}(y-x_\varepsilon)|\leq\varepsilon^\gamma\}}\omega_\varepsilon dy\\
&\ \ \geq 2\pi\sqrt{\det K_H(x_\varepsilon)}\left [ Y((r_*,0))+\frac{\alpha}{2d}\left \{\int_\Omega |x|^2 \omega_\varepsilon dx+d|x_\varepsilon|^2\right \}-\frac{C}{ \ln\frac{1}{\varepsilon}}\right ]-d\gamma.
\end{split}
\end{equation}
Taking the limit inferior to \eqref{4-8}, we get
\begin{equation*}
	\liminf_{\varepsilon\to 0^+}\int\limits_{\Omega\cap\{|T_{x_\varepsilon}(y-x_\varepsilon)|\leq\varepsilon^\gamma\}}\omega_\varepsilon dy \ge \frac{1}{1-\gamma}\left [2\pi\sqrt{\det K_H((r_*,0))}(Y((r_*,0))+\alpha r_*^2)-d\gamma\right ]=d,
	\end{equation*}
	which implies \eqref{conc} for all $ \gamma\in (0,1) $. The proof is thus complete.
\end{proof}

A direct consequence of  Lemma \ref{diam1} is the following estimates of $ diam(supp(\omega_\varepsilon)) $.
\begin{corollary}\label{co1}
	There holds
	\begin{equation}\label{217}
	\lim_{\varepsilon\to0^+}\frac{\ln diam\big(supp(\omega_\varepsilon)\big)}{\ln{\varepsilon}}=1.
	\end{equation}
\end{corollary}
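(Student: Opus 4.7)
The plan is to establish matching upper and lower bounds on $\frac{\ln \text{diam}(\text{supp}(\omega_\varepsilon))}{\ln\varepsilon}$, keeping in mind that $\ln\varepsilon<0$ for small $\varepsilon$, so dividing by $\ln\varepsilon$ reverses inequalities.

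First, for the bound $\liminf_{\varepsilon\to 0^+}\frac{\ln \text{diam}(\text{supp}(\omega_\varepsilon))}{\ln\varepsilon}\ge 1$, I would invoke Lemma \ref{diam1} directly: for any $\gamma\in(0,1)$ and $\varepsilon$ small enough, $\text{diam}(\text{supp}(\omega_\varepsilon))\le 2\varepsilon^\gamma$. Taking logarithm gives $\ln\text{diam}(\text{supp}(\omega_\varepsilon))\le \ln 2+\gamma\ln\varepsilon$, and dividing by the negative quantity $\ln\varepsilon$ yields $\frac{\ln\text{diam}(\text{supp}(\omega_\varepsilon))}{\ln\varepsilon}\ge \frac{\ln 2}{\ln\varepsilon}+\gamma$. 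Passing to the liminf as $\varepsilon\to 0^+$ gives $\liminf\ge \gamma$, and since $\gamma\in(0,1)$ is arbitrary, $\liminf\ge 1$.

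For the reverse bound $\limsup_{\varepsilon\to 0^+}\frac{\ln\text{diam}(\text{supp}(\omega_\varepsilon))}{\ln\varepsilon}\le 1$, I would use the pointwise bound $\omega_\varepsilon\le \varepsilon^{-2}$ together with the constraint $\int_\Omega\omega_\varepsilon\,dx=d$, which forces the Lebesgue measure estimate
\begin{equation*}
|\text{supp}(\omega_\varepsilon)|\ge d\varepsilon^2.
\end{equation*}
Since $\text{supp}(\omega_\varepsilon)$ is contained in a disc of radius $\text{diam}(\text{supp}(\omega_\varepsilon))$, the isodiametric-type estimate $|\text{supp}(\omega_\varepsilon)|\le \pi\,\text{diam}(\text{supp}(\omega_\varepsilon))^2$ gives $\text{diam}(\text{supp}(\omega_\varepsilon))\ge \sqrt{d/\pi}\,\varepsilon$. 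Taking logarithms and dividing by the negative $\ln\varepsilon$ yields
\begin{equation*}
\frac{\ln\text{diam}(\text{supp}(\omega_\varepsilon))}{\ln\varepsilon}\le \frac{\frac{1}{2}\ln(d/\pi)}{\ln\varepsilon}+1,
\end{equation*}
and letting $\varepsilon\to 0^+$ yields $\limsup\le 1$.

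Combining the two inequalities gives the desired limit. There is no substantial obstacle here: the upper bound on $\text{diam}(\text{supp}(\omega_\varepsilon))$ is provided by the already-proved Lemma \ref{diam1}, and the lower bound is a one-line consequence of the $L^\infty$ constraint defining $\mathcal{M}_\varepsilon$. The only subtle point worth stating explicitly is the sign reversal when dividing by $\ln\varepsilon$, which is why the $\varepsilon^\gamma$ upper bound on the diameter produces the \emph{liminf} lower bound of $1$.
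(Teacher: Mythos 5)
Your proposal is correct and follows essentially the same route as the paper: the lower bound $\liminf \ge 1$ comes from the $\varepsilon^\gamma$ diameter bound of Lemma \ref{diam1} with $\gamma\to 1$, and the upper bound $\limsup \le 1$ comes from $|\mathrm{supp}(\omega_\varepsilon)|\ge d\varepsilon^2$ (forced by $0\le\omega_\varepsilon\le\varepsilon^{-2}$ and $\int\omega_\varepsilon=d$) together with an isodiametric-type estimate, which is exactly the paper's argument (stated there via the isoperimetric inequality). Your explicit handling of the sign reversal when dividing by $\ln\varepsilon$ is a welcome clarification but not a difference in substance.
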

\begin{proof}
Since $\int_\Omega \omega_\varepsilon dx =d$ and $ 0\leq\omega_\varepsilon\leq 1/{\varepsilon^2}$, from the isoperimetric inequality we know that there exists $ r_1>0 $ independent of $ \varepsilon $, such that $$ diam(supp(\omega_\varepsilon))\geq r_1\varepsilon. $$ Thus we have
	\begin{equation}\label{10-1}
	\limsup_{\varepsilon\to 0^+}\frac{\ln  diam ( {supp}(\omega_\varepsilon))}{\ln \varepsilon}\le 1.
	\end{equation}
	On the other hand, by Lemma \ref{diam1}, we obtain
	\begin{equation}\label{10-2}
	\liminf_{\varepsilon\to 0^+} \frac{\ln  diam(supp(\omega_\varepsilon))}{\ln \varepsilon}\ge \gamma  \ \ \ \  \forall\, \gamma\in(0,1).
	\end{equation}
	Combining \eqref{10-1} and \eqref{10-2}, we get the result.
\end{proof}

To sum up, we have obtained the asymptotic behavior of $ \omega_\varepsilon $ as follows.
\begin{proposition}\label{le6}
$\mathrm{[Diameter~and~location~of}$ $ \text{supp}(\omega_\varepsilon) $$\mathrm{]}$
	For any $ \gamma\in (0,1), $ there holds
	\begin{equation*}
	diam[\text{supp}(\omega_\varepsilon)] \le 2\varepsilon^{\gamma}
	\end{equation*}
	provided $\varepsilon$ is small enough. Moreover,
	\begin{equation*}
	\lim_{\varepsilon \to 0^+}dist\left( \text{supp}(\omega_\varepsilon), \partial B_{r_*}(0) \right)=0,
	\end{equation*}
		\begin{equation*}
	\lim_{\varepsilon\to0^+}\frac{\ln diam\big(supp(\omega_\varepsilon)\big)}{\ln{\varepsilon}}=1.
	\end{equation*}
\end{proposition}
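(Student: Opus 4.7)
The statement to prove is essentially a consolidation of results already established in Lemmas \ref{le6-1}, \ref{le6-4}, \ref{diam1} and Corollary \ref{co1}, so the proof plan is more about assembling the pieces cleanly than generating new ideas.

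My plan is to prove the three assertions in order, quoting the appropriate earlier results. First, for the diameter bound $\mathrm{diam}[\mathrm{supp}(\omega_\varepsilon)] \le 2\varepsilon^{\gamma}$, I would simply invoke Lemma \ref{diam1}, which was already established by combining the lower bound of the Lagrange multiplier from Lemma \ref{lowbdd of mu}, the pointwise estimate \eqref{214} for the stream function on the support, and a bootstrap argument. The bootstrap used a crude diameter bound of order $1/\ln(1/\varepsilon)$ to sharpen the decomposition of $\mathcal{G}_{K_H}\omega_\varepsilon$ and recover concentration of circulation in $\{|T_{x_\varepsilon}(y-x_\varepsilon)|\le\varepsilon^\gamma\}$ for arbitrary $\gamma\in(0,1)$.

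Next, for the location statement $\lim_{\varepsilon\to 0^+}\mathrm{dist}(\mathrm{supp}(\omega_\varepsilon),\partial B_{r_*}(0))=0$, I would use the quantities $\bar{P}_\varepsilon$ and $\bar{Q}_\varepsilon$ introduced in \eqref{111A} as the infimum and supremum of $|x|$ over $\mathrm{supp}(\omega_\varepsilon)$. By Lemma \ref{le6-1} (via the lower bound of $\mathcal{E}_\varepsilon(\omega_\varepsilon)$ and the characterization of the unique radial maximum of $Y$ from Lemma \ref{Y max}) we have $\bar{P}_\varepsilon\to r_*$, and by Lemma \ref{le6-4} (which combines the moment of inertia estimate of Lemma \ref{le6-2} with the lower bound on the circulation in a small neighborhood of $Q_\varepsilon$) we have $\bar{Q}_\varepsilon\to r_*$. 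Since every $x\in\mathrm{supp}(\omega_\varepsilon)$ satisfies $\bar{P}_\varepsilon\le|x|\le\bar{Q}_\varepsilon$, we conclude $\bigl||x|-r_*\bigr|\to 0$ uniformly on $\mathrm{supp}(\omega_\varepsilon)$, which is equivalent to the stated distance-to-$\partial B_{r_*}(0)$ claim.

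Finally, the logarithmic diameter asymptotic is exactly Corollary \ref{co1}: the lower bound $\mathrm{diam}(\mathrm{supp}(\omega_\varepsilon))\ge r_1\varepsilon$ follows from the constraint $0\le\omega_\varepsilon\le 1/\varepsilon^2$ together with $\int_\Omega\omega_\varepsilon=d$ via the isoperimetric inequality, yielding $\limsup\ln\mathrm{diam}/\ln\varepsilon\le 1$; the matching lower bound on this ratio is immediate from the $2\varepsilon^\gamma$ upper bound of part (1) by letting $\gamma\uparrow 1$.

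There is no genuine obstacle here, since all the analytical work---the energy lower bound, the Lagrange multiplier estimate, the Green's function decomposition from Theorem \ref{expe of G_K}, the rearrangement-inequality argument in \eqref{212}--\eqref{213}, the moment-of-inertia vanishing in annuli, and the bootstrap to diameter of order $\varepsilon^\gamma$---has already been carried out. The only care needed is to ensure that each cited lemma is applied with the correct choice of the free parameter $\gamma$ so that the three statements hold simultaneously for all small $\varepsilon$; this is automatic because the bounds in Lemma \ref{diam1} hold for every $\gamma\in(0,1)$ once $\varepsilon$ is small enough depending on $\gamma$.
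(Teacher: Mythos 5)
Your proposal is correct and follows exactly the paper's route: Proposition \ref{le6} is stated there as a summary, obtained by combining Lemma \ref{diam1} for the $2\varepsilon^{\gamma}$ diameter bound, Lemmas \ref{le6-1} and \ref{le6-4} (via $\bar{P}_\varepsilon,\bar{Q}_\varepsilon\to r_*$) for the location, and Corollary \ref{co1} for the logarithmic asymptotics. Your assembly of these pieces, including the observation that the distance claim follows from the sandwich $\bar{P}_\varepsilon\le|x|\le\bar{Q}_\varepsilon$ on the support, matches the paper's argument.
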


Furthermore, we can improve Proposition \ref{le6} by showing that the diameter of $ supp(\omega_\varepsilon) $ is of the order of $ \varepsilon $, rather than $ \varepsilon^\gamma $ for any $ \gamma\in(0,1) $.
First,   we get the following optimal asymptotic expansions of the energy $ \mathcal{E}_\varepsilon(\omega_\varepsilon) $ and Lagrange multiplier $ \mu^\varepsilon $.
\begin{lemma}\label{le10}
As $\varepsilon \to 0^+$, there holds
	\begin{align}
	\label{4-9} \mathcal{E}_\varepsilon(\omega_\varepsilon) & =\left(\frac{d^2}{4\pi\sqrt{\det K_H((r_*,0))}}-\frac{d \alpha r_*^2}{2}\right)\ln{\frac{1}{\varepsilon}}+O(1), \\
	\label{4-10}  \mu^\varepsilon & =\left(\frac{d}{2\pi\sqrt{\det K_H((r_*,0))}}-\frac{  \alpha r_*^2}{2}\right)\ln{\frac{1}{\varepsilon}}+O(1).
	\end{align}
\end{lemma}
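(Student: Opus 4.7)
The plan is to prove \eqref{4-9} by complementing the lower bound of Lemma \ref{lowbdd of E} with a matching upper bound, and then to deduce \eqref{4-10} from the algebraic identity relating $\mu^\varepsilon$, the energy, and the moment of inertia, together with a refined moment estimate.

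\textbf{Upper bound on $\mathcal E_\varepsilon(\omega_\varepsilon)$.} I would apply Proposition \ref{expe of G_H} to split $G_{K_H}$ into its explicit Newtonian part and the bounded remainder $S_{K_H}$. Because Proposition \ref{le6} provides $\mathrm{diam}(\mathrm{supp}\,\omega_\varepsilon)=o(1)$ with location close to $\partial B_{r_*}(0)$, I can freeze the smooth coefficients $\sqrt{\det K_H(\cdot)}^{-1}$ and $T_\cdot$ at the reference point $x_0=P_\varepsilon$, paying only an $O(\varepsilon^\gamma\ln(1/\varepsilon))=o(1)$ error after integration against $\omega_\varepsilon\otimes\omega_\varepsilon$; the $S_{K_H}$ piece contributes $O(d^2)=O(1)$ by the boundedness in Proposition \ref{expe of G_H}. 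The remaining logarithmic double integral is estimated via the affine change of variables $z=T_{P_\varepsilon}(x-P_\varepsilon)$ (with Jacobian $\sqrt{\det K_H(P_\varepsilon)}^{-1}$) combined with the Riesz rearrangement inequality for the radial-decreasing kernel $\Gamma$; the rearranged density is $\tfrac{1}{\varepsilon^2}\mathbf{1}_{B_R(0)}$ with $\pi R^2=\varepsilon^2 d\sqrt{\det K_H(P_\varepsilon)}^{-1}$, and an explicit calculation on the disk yields
\begin{equation*}
\int_\Omega \omega_\varepsilon\,\mathcal G_{K_H}\omega_\varepsilon\,dx \le \frac{d^{2}\sqrt{k^{2}+(\bar P_\varepsilon)^{2}}}{2\pi k}\,\ln\tfrac{1}{\varepsilon}+O(1).
\end{equation*}
Combined with $\int|x|^2\omega_\varepsilon\,dx\ge d(\bar P_\varepsilon)^2$ and the fact that $s\mapsto Y(s)$ attains its maximum at $s=r_*$ (Lemma \ref{Y max}), this yields
\begin{equation*}
\mathcal E_\varepsilon(\omega_\varepsilon)\le \tfrac{d}{2}\,Y(\bar P_\varepsilon)\,\ln\tfrac{1}{\varepsilon}+O(1)\le \tfrac{d}{2}\,Y((r_*,0))\,\ln\tfrac{1}{\varepsilon}+O(1),
\end{equation*}
which together with Lemma \ref{lowbdd of E} proves \eqref{4-9}.

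\textbf{Expansion of $\mu^\varepsilon$.} Rewriting \eqref{4-5} as
\begin{equation*}
d\mu^\varepsilon = 2\mathcal E_\varepsilon(\omega_\varepsilon)+\tfrac{\alpha}{2}\ln\tfrac{1}{\varepsilon}\int_\Omega |x|^{2}\omega_\varepsilon\,dx-\int_\Omega \omega_\varepsilon\psi^\varepsilon\,dx,
\end{equation*}
I would use the bound $\int_\Omega \omega_\varepsilon\psi^\varepsilon\,dx=O(1)$ obtained in \eqref{4-4} and substitute \eqref{4-9}. The expansion \eqref{4-10} then reduces to the refined moment identity
\begin{equation*}
\int_\Omega |x|^{2}\omega_\varepsilon\,dx = dr_*^{2}+O\bigl(1/\ln(1/\varepsilon)\bigr).
\end{equation*}
Inequality \eqref{4-8} at $x_\varepsilon=P_\varepsilon$, combined with $\int|x|^2\omega_\varepsilon\,dx\ge d(\bar P_\varepsilon)^2$, already pins the moment to $d(\bar P_\varepsilon)^2$ within $O(1/\ln(1/\varepsilon))$, so the remaining issue is to upgrade $(\bar P_\varepsilon)^2=r_*^2+O(1/\sqrt{\ln(1/\varepsilon)})$, which follows from the non-degeneracy $Y''(r_*)<0$ alone, to $(\bar P_\varepsilon)^2=r_*^2+O(1/\ln(1/\varepsilon))$.

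\textbf{Main obstacle.} This sharpened control on $\bar P_\varepsilon-r_*$ is the genuinely delicate step. I would obtain it by combining the diameter bound $\bar Q_\varepsilon-\bar P_\varepsilon=O(\varepsilon^\gamma)$ of Proposition \ref{le6} with a straddling argument: using the Euler--Lagrange identity $\psi^\varepsilon=0$ on $\partial\mathrm{supp}(\omega_\varepsilon)$ and a small radial-translation competitor, one rules out that $\mathrm{supp}(\omega_\varepsilon)$ lies entirely on a single side of $\partial B_{r_*}(0)$, forcing $\bar P_\varepsilon\le r_*\le \bar Q_\varepsilon$; combined with $\bar Q_\varepsilon-\bar P_\varepsilon\le 2\varepsilon^\gamma$ this yields $|\bar P_\varepsilon-r_*|\le 2\varepsilon^\gamma$, stronger than the $O(1/\ln(1/\varepsilon))$ needed. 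Feeding this back into the identity for $d\mu^\varepsilon$ completes \eqref{4-10}. Quantifying the straddling is the hardest part because $\mathcal G_{K_H}$ is not translation-invariant, so the first-order energy response to a radial displacement must be read off carefully from the explicit decomposition of Proposition \ref{expe of G_H}.
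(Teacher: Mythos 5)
Your treatment of \eqref{4-9} is essentially the paper's own proof: freeze the coefficients at $P_\varepsilon$ using the diameter bound of Lemma \ref{diam1}, absorb the $S_{K_H}$ part via the bound in Proposition \ref{expe of G_H}, apply the Riesz rearrangement inequality after the affine change of variables, and then combine the resulting upper bound with $\int_\Omega|x|^2\omega_\varepsilon\,dx\ge d\bar{P}_\varepsilon^2$, Lemma \ref{Y max} and the lower bound of Lemma \ref{lowbdd of E}. That half is fine.

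For \eqref{4-10} your route both departs from the paper and contains a genuine gap. The paper deduces \eqref{4-10} directly from \eqref{4-9} and Lemma \ref{lowbdd of mu}, i.e.\ from the identity $d\mu^\varepsilon=2\mathcal{E}_\varepsilon(\omega_\varepsilon)+\frac{\alpha}{2}\ln\frac{1}{\varepsilon}\int_\Omega|x|^2\omega_\varepsilon\,dx-\int_\Omega\omega_\varepsilon\psi^\varepsilon\,dx$ with $0\le\int_\Omega\omega_\varepsilon\psi^\varepsilon\,dx\le C$, together with the moment asymptotics of Lemma \ref{le6-2}; no rate for $|\bar{P}_\varepsilon-r_*|$ is invoked, and in the place where \eqref{4-10} is actually used, namely \eqref{223}, the version with $|x_\varepsilon|$ in place of $r_*$ (which follows from the lower bounds alone) suffices. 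You are right that pinning the coefficient to $r_*$ with an $O(1)$ error amounts to $\int_\Omega|x|^2\omega_\varepsilon\,dx=dr_*^2+O(1/\ln\frac{1}{\varepsilon})$, hence to $|\bar{P}_\varepsilon-r_*|=O(1/\ln\frac{1}{\varepsilon})$, and that the soft argument through $Y''(r_*)<0$ only yields $O((\ln\frac{1}{\varepsilon})^{-1/2})$ — this is a real subtlety the paper passes over. But your proposed fix, the straddling claim $\bar{P}_\varepsilon\le r_*\le\bar{Q}_\varepsilon$ and hence $|\bar{P}_\varepsilon-r_*|\le2\varepsilon^\gamma$, would fail: nothing forces the support of a maximizer to meet the circle $\partial B_{r_*}(0)$. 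The location of the patch is selected by the full reduced energy, where the $O(1)$ contribution $\frac{d^2}{2}S_{K_H}(z,z)$ competes with $\frac{d}{2}Y(|z|)\ln\frac{1}{\varepsilon}$; since $Y'(r_*)=0$, a radial displacement $\delta$ of the center costs only $O(\delta^2\ln\frac{1}{\varepsilon})$ in the leading term while changing the regular part by $O(\delta)$, so generically the optimal center sits at distance of order $1/\ln\frac{1}{\varepsilon}$ from $\{|x|=r_*\}$, which is far larger than $diam(supp(\omega_\varepsilon))\sim\varepsilon$; the patch then lies entirely on one side of the circle. Your radial-translation competitor only expresses first-order stationarity, which is perfectly compatible with such a displacement and cannot yield straddling, let alone localization at scale $\varepsilon^\gamma$. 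So the key step of your argument for \eqref{4-10} is not viable as proposed; if one insists on the $O(1)$ precision with $r_*$, one needs an expansion-type argument locating the center to within $O(1/\ln\frac{1}{\varepsilon})$, not a straddling claim.
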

\begin{proof}
We first prove $\eqref{4-9}$. According to Lemma $\ref{diam1}$, there holds
	\begin{equation*}
	supp\,(\omega_\varepsilon)\subseteq B_{2\varepsilon^{\frac{1}{2}}}( P_\varepsilon)
	\end{equation*}
for all sufficiently small $\varepsilon$. Hence from Proposition \ref{expe of G_H}  we get
	\begin{equation*}
	\begin{split}
	&\int_\Omega\omega_\varepsilon \mathcal{G}_{K_H} \omega_\varepsilon dx \\
	&\ \ \le \iint\limits_{\Omega\times\Omega}\frac{\sqrt{\det K_H(x)}^{-1}+\sqrt{\det K_H(y)}^{-1}}{2}\Gamma\left (\frac{T_{x}+T_y}{2}\left (x-y\right )\right )\omega_\varepsilon(x)\omega_\varepsilon(y)dxdy+C\\
	&\ \ \leq\frac{\sqrt{\det K_H(P_\varepsilon)}^{-1}+C\varepsilon^{\frac{1}{2}}}{2\pi}\iint\limits_{\Omega\times\Omega}\left (\ln\frac{1}{|T_{P_\varepsilon}(x-y)|}+C\varepsilon^{\frac{1}{2}}\right )\omega_\varepsilon(x)\omega_\varepsilon(y)dxdy+C\\
	&\ \ \leq \frac{\sqrt{\det K_H(P_\varepsilon)}^{-1}}{2\pi}\iint\limits_{\Omega\times\Omega}\ln\frac{1}{|T_{P_\varepsilon}(x-y)|}\omega_\varepsilon(x)\omega_\varepsilon(y)dxdy+C\\
	&\ \ =\frac{\sqrt{\det K_H(P_\varepsilon)}^{-1}}{2\pi}\iint\ln\frac{1}{|x'-y'|}\omega_\varepsilon(T_{P_\varepsilon}^{-1}x')\omega_\varepsilon(T_{P_\varepsilon}^{-1}y')\det K_H(P_\varepsilon)dx'dy'+C.
	\end{split}
	\end{equation*}
By the Riesz's rearrangement inequality,
	\begin{equation*}
	\begin{split}
\iint\ln\frac{1}{|x'-y'|}\omega_\varepsilon(T_{P_\varepsilon}^{-1}x')\omega_\varepsilon(T_{P_\varepsilon}^{-1}y')dx'dy'\le& \ln\frac{1}{\varepsilon}\left \{\int\omega_\varepsilon(T_{P_\varepsilon}^{-1}x')dx'\right \}^2+C\\
=&	d^2\det K_H(P_\varepsilon)^{-1}\ln\frac{1}{\varepsilon}+C.
	\end{split}
	\end{equation*}
Thus we get
	\begin{equation*}
	\int_\Omega\omega_\varepsilon \mathcal{G}_{K_H} \omega_\varepsilon dx \le \frac{d^2\sqrt{\det K_H(P_\varepsilon)}^{-1}}{2\pi}\ln\frac{1}{\varepsilon} +C,
	\end{equation*}
from which we conclude that
	\begin{equation*}
	\begin{split}
\mathcal{E}_\varepsilon(\omega_\varepsilon)=&\frac{1}{2}\int_{\Omega}\omega_\varepsilon\mathcal{G}_{K_H}\omega_\varepsilon dx-\frac{\alpha }{2}\log\frac{1}{\varepsilon}\int_{\Omega}|x|^2\omega_\varepsilon  dx \\
\le& \left(\frac{d^2\sqrt{\det K_H(P_\varepsilon)}^{-1}}{4\pi}-\frac{d \alpha|P_\varepsilon|^2}{2}\right)\ln{\frac{1}{\varepsilon}}+C\\
\le& \left(\frac{d^2}{4\pi\sqrt{\det K_H((r_*,0))}}-\frac{d \alpha r_*^2}{2}\right)\ln{\frac{1}{\varepsilon}}+C.
	\end{split}
	\end{equation*}
Combining the above estimate with Lemma $\ref{lowbdd of E}$, we get $\eqref{4-9}$.  $\eqref{4-10}$ clearly follows from  $\eqref{4-9}$ and Lemma \ref{lowbdd of mu}.
\end{proof}

Based on   Lemma \ref{le10}, we can further improve results of Proposition \ref{le6} that the diameter of the support set of $ \omega_\varepsilon $ is the order  of $ \varepsilon $.
\begin{lemma}\label{sharp}
	
	$\mathrm{[Refined~estimate~of}$  $diam(\text{supp}(\omega_\varepsilon)) $$\mathrm{]}$
There exist  $r_1, r_2>0$ independent of $\varepsilon$ such that for $ \varepsilon $ sufficiently  small,
	\begin{equation*}
	r_1 \varepsilon\leq diam(\text{supp}(\omega_\varepsilon)) \le r_2 \varepsilon.
	\end{equation*}
\end{lemma}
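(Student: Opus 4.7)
The lower bound $r_1\varepsilon\leq\mathrm{diam}(\mathrm{supp}(\omega_\varepsilon))$ is immediate from $\omega_\varepsilon\leq\varepsilon^{-2}$, $\int_\Omega\omega_\varepsilon=d$ and the isoperimetric inequality (it was already exploited in the proof of Corollary~\ref{co1}), so the work lies entirely in the upper bound. The plan is to establish a uniform concentration estimate: there exists $C$ independent of $R$ and $\varepsilon$ such that for every $R>1$ and every $x_\varepsilon\in\mathrm{supp}(\omega_\varepsilon)$,
\begin{equation*}
\int_{\{|T_{x_\varepsilon}(y-x_\varepsilon)|\leq R\varepsilon\}}\omega_\varepsilon(y)\,dy\geq d-\frac{C}{\ln R}
\end{equation*}
for all small $\varepsilon$. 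Granted this, pick two points $P'_\varepsilon,Q'_\varepsilon\in\mathrm{supp}(\omega_\varepsilon)$ realizing the diameter; since the eigenvalues of $T_{x}$ are bounded away from $0$ and $\infty$ uniformly in $x\in\Omega$, the two Euclidean balls $B_{C_0R\varepsilon}(P'_\varepsilon)$ and $B_{C_0R\varepsilon}(Q'_\varepsilon)$ each carry $\omega_\varepsilon$-mass at least $d-C/\ln R$. Since the total mass is only $d$, for $R$ large enough these balls must intersect, and this forces $|P'_\varepsilon-Q'_\varepsilon|\leq 2C_0R\varepsilon=:r_2\varepsilon$.

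To prove the concentration estimate I would start from the Euler--Lagrange inequality $\mathcal{G}_{K_H}\omega_\varepsilon(x_\varepsilon)\geq \mu^\varepsilon+\tfrac{\alpha|x_\varepsilon|^2}{2}\ln(1/\varepsilon)$ and upper-bound the left-hand side via Proposition~\ref{expe of G_H}. After the change of variables $z=T_{x_\varepsilon}(y-x_\varepsilon)$ the Green's function integral reduces to $\tfrac{1}{2\pi}\int\ln(1/|z|)\tilde\omega(z)\,dz+O(1)$, where $\tilde\omega(z)=\omega_\varepsilon(T_{x_\varepsilon}^{-1}z+x_\varepsilon)$. Splitting at $|z|=R\varepsilon$ and letting $m_R$ denote the mass in the near region, the far part is bounded trivially by $\tfrac{d-m_R}{2\pi\sqrt{\det K_H(x_\varepsilon)}}\ln\tfrac{1}{R\varepsilon}$, while the Riesz rearrangement inequality together with $\omega_\varepsilon\leq\varepsilon^{-2}$ bounds the near part by $\tfrac{m_R}{2\pi\sqrt{\det K_H(x_\varepsilon)}}[\ln\tfrac{1}{\varepsilon}+O(1)]$. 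Substituting the sharp expansion of $\mu^\varepsilon$ from Lemma~\ref{le10} and using the identity $\tfrac{d}{2\pi\sqrt{\det K_H((r_*,0))}}-\tfrac{d}{2\pi\sqrt{\det K_H(x_\varepsilon)}}=Y((r_*,0))-Y(x_\varepsilon)+\alpha(r_*^2-|x_\varepsilon|^2)$, the leading $\ln(1/\varepsilon)$-terms assemble into
\begin{equation*}
\Bigl[Y((r_*,0))-Y(x_\varepsilon)+\tfrac{\alpha(r_*^2-|x_\varepsilon|^2)}{2}\Bigr]\ln\tfrac{1}{\varepsilon}+\tfrac{d-m_R}{2\pi\sqrt{\det K_H(x_\varepsilon)}}\ln R\leq C.
\end{equation*}

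The main obstacle is that, although Proposition~\ref{le6} gives $|x_\varepsilon|^2-r_*^2=o(1)$, this is not a priori $O(1/\ln(1/\varepsilon))$, and without such quantitative control the first bracket (times $\ln(1/\varepsilon)$) could destroy the desired $(d-m_R)\ln R\leq C$. I would remove this obstacle by first upgrading Lemma~\ref{le6-2} to the sharp rate $\mathcal{I}(\omega_\varepsilon)=\tfrac{dr_*^2}{2}+O(1/\ln(1/\varepsilon))$, which follows from the identity $d\mu^\varepsilon=2\mathcal{E}_\varepsilon(\omega_\varepsilon)+\alpha\ln(1/\varepsilon)\mathcal{I}(\omega_\varepsilon)-\int_\Omega\omega_\varepsilon\psi^\varepsilon\,dx$ (essentially \eqref{4-5}), the sharp expansions from Lemma~\ref{le10}, and the uniform bound $\int_\Omega\omega_\varepsilon\psi^\varepsilon\,dx=O(1)$ already established in Lemma~\ref{lowbdd of mu}. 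Coupling this moment estimate with $d\bar P_\varepsilon^2\leq\int_\Omega|x|^2\omega_\varepsilon\,dx\leq d\bar Q_\varepsilon^2$ and the coarse bound $\bar Q_\varepsilon-\bar P_\varepsilon\leq 2\varepsilon^{1/2}$ from Proposition~\ref{le6} yields $\bar P_\varepsilon^2,\bar Q_\varepsilon^2=r_*^2+O(1/\ln(1/\varepsilon))$, hence $|x|^2-r_*^2=O(1/\ln(1/\varepsilon))$ uniformly for $x\in\mathrm{supp}(\omega_\varepsilon)$. Since $(r_*,0)$ is a non-degenerate maximum of $Y$ viewed as a function of $|x|^2$ (cf.\ Lemma~\ref{Y max}), a Taylor expansion of $Y$ at $|x|=r_*$ then shows the entire bracket is $O(1/\ln(1/\varepsilon))$, so that $[\cdot]\ln(1/\varepsilon)$ is absorbed into the $O(1)$ on the right-hand side, and the concentration estimate $(d-m_R)\ln R\leq C$ follows at once.
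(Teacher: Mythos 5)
Your proposal is correct, and its skeleton is the same as the paper's: the lower bound by the isoperimetric inequality, the Euler--Lagrange inequality at an arbitrary support point, the splitting of the Green's-function integral at the scale $R\varepsilon$ with the far part bounded by $\tfrac{d-m_R}{2\pi\sqrt{\det K_H(x_\varepsilon)}}\ln\tfrac{1}{R\varepsilon}$ and the near part by $\tfrac{m_R}{2\pi\sqrt{\det K_H(x_\varepsilon)}}\ln\tfrac1\varepsilon+O(1)$ via rearrangement, and the final choice of $R$ so that every support point carries more than half the mass within distance $O(R\varepsilon)$. Where you genuinely diverge is in how the Lagrange multiplier is fed in. The paper avoids the obstacle you flag by never expanding $\mu^\varepsilon$ at $(r_*,0)$: it lower-bounds $\mu^\varepsilon$ directly at the point $x_\varepsilon$ (its inequality \eqref{223}), which follows from Lemma \ref{lowbdd of mu} by keeping the moment term, using $\tfrac{\alpha}{2d}\int_\Omega|x|^2\omega_\varepsilon\,dx\ge\tfrac{\alpha}{2}\bar P_\varepsilon^2\ge\tfrac{\alpha}{2}|x_\varepsilon|^2-O(\varepsilon^{1/2})$ (via the crude diameter bound of Proposition \ref{le6}) together with $Y((r_*,0))\ge Y(x_\varepsilon)$; no rate on $|x_\varepsilon|-r_*$ is needed, so the bracket you worry about never appears. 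You instead substitute the sharp expansion \eqref{4-10} at $(r_*,0)$ and must then control $|x_\varepsilon|^2-r_*^2$ to order $1/\ln(1/\varepsilon)$, which you do correctly by upgrading the moment estimate through the identity \eqref{4-5}, Lemma \ref{le10} and the uniform bound $\int_\Omega\omega_\varepsilon\psi^\varepsilon\,dx=O(1)$ from the proof of Lemma \ref{lowbdd of mu}; combined with $d\bar P_\varepsilon^2\le\int_\Omega|x|^2\omega_\varepsilon\,dx\le d\bar Q_\varepsilon^2$ and $\bar Q_\varepsilon-\bar P_\varepsilon\le 2\varepsilon^{1/2}$ this indeed yields $\big||x_\varepsilon|^2-r_*^2\big|=O(1/\ln(1/\varepsilon))$ uniformly on the support. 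Your route is slightly longer but buys a genuine by-product the paper does not state, namely the quantitative refinement $\mathcal{I}(\omega_\varepsilon)=\tfrac{dr_*^2}{2}+O(1/\ln(1/\varepsilon))$ and the rate of localization of the support; the paper's comparison at $x_\varepsilon$ is shorter and needs only the sign information $Y((r_*,0))\ge Y(x_\varepsilon)$. One small remark: the Taylor expansion and non-degeneracy of $Y$ are superfluous at the last step, since only a one-sided bound on the bracket is needed and $Y((r_*,0))-Y(x_\varepsilon)\ge0$ (or, for a two-sided bound, mere Lipschitz continuity of $Y$ in $|x|^2$) already suffices.
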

\begin{proof}
In the proof of Corollary \ref{co1}, we have shown that $ diam(  {supp}(\omega_\varepsilon))\geq r_1 \varepsilon$ for some $ r_1>0 $. It suffices to prove $ diam({supp}(\omega_\varepsilon)) \le r_2 \varepsilon $ for some $r_2>0$.
	
Recalling that for any $  x_\varepsilon \in diam( {supp}(\omega_\varepsilon))  $,
	\begin{equation}\label{221-1}
	\mathcal{G}_{K_H}\omega_\varepsilon(x_\varepsilon)-\frac{\alpha|x_\varepsilon|^2}{2}\ln{\frac{1}{\varepsilon}}\ge \mu^{\varepsilon}.
	\end{equation}
According to Proposition $\ref{le6}$, there holds
	\begin{equation}\label{4-11}
	supp(\omega_\varepsilon)\subseteq B_{2\varepsilon^{\frac{1}{2}}} ( x_\varepsilon ).
	\end{equation}
Set $R>1$ to be determined. On the one hand,
	\begin{equation*}
	\begin{split}
	\mathcal{G}_{K_H}\omega_\varepsilon(x_\varepsilon)
	&=\left (\int_{\Omega\cap\{|T_{x_\varepsilon}(y-x_\varepsilon)|> R\varepsilon\}}+\int_{\Omega\cap\{|T_{x_\varepsilon}(y-x_\varepsilon)|\leq R\varepsilon\}}\right )G_{K_H}(x_\varepsilon,y)\omega_\varepsilon(y)dy\\
	&=:B_1+B_2.
	\end{split}
	\end{equation*}
	By \eqref{4-11}, we get
	\begin{equation}\label{4-12}
	\begin{split}
	B_1 &=\int_{\Omega\cap\{|T_{x_\varepsilon}(y-x_\varepsilon)|> R\varepsilon\}}G_{K_H}(x_\varepsilon,y)\omega_\varepsilon(y)dy\\
	&\le \frac{\sqrt{\det K_H(x_\varepsilon)}^{-1}+O\left (\varepsilon^{\frac{1}{2}}\right )}{2\pi}\ln{\frac{1}{R\varepsilon}}\int\limits_{\Omega\cap\{|T_{x_\varepsilon}(y-x_\varepsilon)|> R\varepsilon\}}\omega_\varepsilon dy+C\\
	&\le \frac{\sqrt{\det K_H(x_\varepsilon)}^{-1}}{2\pi}\ln{\frac{1}{R\varepsilon}}\int\limits_{\Omega\cap\{|T_{x_\varepsilon}(y-x_\varepsilon)|> R\varepsilon\}}\omega_\varepsilon dy+C,
	\end{split}
	\end{equation}
	and
	\begin{equation}\label{4-13}
	\begin{split}
	B_2 &=\int_{\Omega\cap\{|T_{x_\varepsilon}(y-x_\varepsilon)|\leq R\varepsilon\}}G_{K_H}(x_\varepsilon,y)\omega_\varepsilon(y)dy\\
	&\le \frac{\sqrt{\det K_H(x_\varepsilon)}^{-1}+O\left (\varepsilon^\frac{1}{2}\right )}{2\pi}\int\limits_{\Omega\cap\{|T_{x_\varepsilon}(y-x_\varepsilon)|\leq R\varepsilon\}}\left (\ln\frac{1}{|T_{x_\varepsilon}(y-x_\varepsilon)|}+O\left (\varepsilon^{\frac{1}{2}}\right )\right )\omega_\varepsilon(y)dy+C\\
	&\le \frac{\sqrt{\det K_H(x_\varepsilon)}^{-1}+O\left (\varepsilon^\frac{1}{2}\right )}{2\pi}\int\limits_{ |z|\leq R\varepsilon }\left (\ln\frac{1}{|z|}+O\left (\varepsilon^{\frac{1}{2}}\right )\right )\omega_\varepsilon\left (T_{x_\varepsilon}^{-1}z+x_\varepsilon\right )\sqrt{\det K_H(x_\varepsilon)}dz +C\\
&\le \frac{\sqrt{\det K_H(x_\varepsilon)}^{-1}+O\left (\varepsilon^\frac{1}{2}\right )}{2\pi}\ln\frac{1}{\varepsilon}\int\limits_{ |z|\leq R\varepsilon }\omega_\varepsilon\left (T_{x_\varepsilon}^{-1}z+x_\varepsilon\right )\sqrt{\det K_H(x_\varepsilon)}dz +C\\
	& \le \frac{\sqrt{\det K_H(x_\varepsilon)}^{-1}}{2\pi}\ln\frac{1}{\varepsilon}\int\limits_{\Omega\cap\{|T_{x_\varepsilon}(y-x_\varepsilon)|\leq R\varepsilon\}}\omega_\varepsilon dy+C.
	\end{split}
	\end{equation}
	Taking \eqref{4-12} and \eqref{4-13} into \eqref{221-1}, we get
	\begin{equation}\label{222-1}
	\begin{split}
&\frac{\sqrt{\det K_H(x_\varepsilon)}^{-1}}{2\pi}\ln{\frac{1}{R\varepsilon}}\int\limits_{\{|T_{x_\varepsilon}(y-x_\varepsilon)|> R\varepsilon\}}\omega_\varepsilon dy+\frac{\sqrt{\det K_H(x_\varepsilon)}^{-1}}{2\pi}\ln\frac{1}{\varepsilon}\int\limits_{\{|T_{x_\varepsilon}(y-x_\varepsilon)|\leq R\varepsilon\}}\omega_\varepsilon dy\\
&\ \ \ge \frac{\alpha|x_\varepsilon|^2}{2}\ln{\frac{1}{\varepsilon}}+\mu^{\varepsilon}-C .
	\end{split}
	\end{equation}
On the other hand, by Lemma $\ref{le10}$, one has
	\begin{equation}\label{223}
	\mu^\varepsilon\ge\left(\frac{d}{2\pi\sqrt{\det K_H(x_\varepsilon)}}-\frac{  \alpha |x_\varepsilon|^2}{2}\right)\ln{\frac{1}{\varepsilon}}-C.
	\end{equation}
Combining $\eqref{222-1}$ and $\eqref{223}$, we obtain
	\begin{equation*}
	\frac{d}{2\pi}\ln\frac{1}{\varepsilon} \le \frac{1}{2\pi}\ln{\frac{1}{R\varepsilon}}\int\limits_{\{|T_{x_\varepsilon}(y-x_\varepsilon)|> R\varepsilon\}}\omega_\varepsilon dy+\frac{1}{2\pi}\ln\frac{1}{\varepsilon}\int\limits_{\{|T_{x_\varepsilon}(y-x_\varepsilon)|\leq R\varepsilon\}}\omega_\varepsilon dy+C,
	\end{equation*}
which implies that
	\begin{equation*}
\int\limits_{\{|T_{x_\varepsilon}(y-x_\varepsilon)|\leq R\varepsilon\}}\omega_\varepsilon dy\ge d\left (1-\frac{C}{\ln R}\right ).
	\end{equation*}
	Taking $R>1$ such that $C(\ln R)^{-1}<1/2$, we obtain
	\begin{equation*}
	\int\limits_{\Omega\cap{B_{R\varepsilon}(x_\varepsilon)}}\omega_\varepsilon dy\ge\int\limits_{\{|T_{x_\varepsilon}(y-x_\varepsilon)|\leq R\varepsilon\}}\omega_\varepsilon dy> \frac{\kappa}{2}.
	\end{equation*}
	Taking $r_2=2R$, we complete the proof of Lemma \ref{sharp}.	
\end{proof}
\subsection{Asymptotic shape of $ supp(\omega_\varepsilon) $}
Having estimated the limiting location and diameter of the support set of $ \omega_\varepsilon  $, now we consider the asymptotic shape of $ supp(\omega_\varepsilon) $.

To this end, let us define the center of $supp(\omega_\varepsilon)$
\begin{equation*}
X^\varepsilon=\frac{\int_\Omega x\omega_\varepsilon(x)dx}{\int_\Omega \omega_\varepsilon(x)dx}.
\end{equation*}
The scaled function of $ \omega_\varepsilon $ is denoted by
\begin{equation*}
 \eta^\varepsilon(x):=\varepsilon^2 \omega_\varepsilon(X^\varepsilon+\varepsilon x),\ \ \ x\in\mathbb{R}^2.
\end{equation*}
From Proposition \ref{le6} and Lemma \ref{sharp}, we know that there exists $ x_*\in\Omega $ with $ |x_*|= r_* $ such that $ |X^\varepsilon-x_*|\to 0$  as $ \varepsilon\to0 $ and $  supp(\eta^\varepsilon)\subset B_{r_2}(0) $.

Different from the 2D Euler and 3D axisymmetric Euler cases, we will prove that the limiting shape of the support set $ \eta^\varepsilon $ is an ellipse, rather than a disc.  To this end, we denote
\begin{equation}\label{4-15}
\zeta^\varepsilon(x):=\eta^\varepsilon\left (T_{X^\varepsilon}^{-1}x\right ),\ \ \ x\in\mathbb{R}^2.
\end{equation}
From Proposition \ref{exist of max}, we know that $ \zeta^\varepsilon=\textbf{1}_{K_\varepsilon} $ for some $ K_\varepsilon\subseteq B_{r_2}(0)\subseteq\mathbb{R}^2 $ and
\begin{equation}\label{4-14}
|K_\varepsilon|=\int\zeta^\varepsilon dx=d\sqrt{\det K_H(X^\varepsilon)}^{-1}.
\end{equation}
By the uniform  boundedness of $ \zeta^\varepsilon $  in $ L^p(B_{r_2}(0)) $ for any $ p\in(1,+\infty] $,  up to a subsequence (still denoted by $ \zeta^\varepsilon $), we may assume that
\begin{equation*}
\zeta^\varepsilon \to \zeta
\end{equation*}
in $ L^{p} $ weak topology for $ p\in (1,+\infty) $ and $ L^{ \infty} $ weak star topology for some  $ \zeta\in L^{\infty}(B_{r_2}(0))  $ as $ \varepsilon\to0. $  It is not hard to  prove that  $ 0\leq \zeta\leq 1 $ and $ supp(\zeta)\subset B_{r_2}(0) $.

Denote $ (\zeta^\varepsilon)^\bigtriangleup $ the symmetric radially nonincreasing
Lebesgue-rearrangement of $ \zeta^\varepsilon $ centered on 0. Then one computes directly that
\begin{equation*}
(\zeta^\varepsilon)^\bigtriangleup \to \check{\zeta}:=\textbf{1}_{B_{c^*}(0)},
\end{equation*}
in $ L^p $ topology for $ p\in(1,+\infty) $  as $ \varepsilon\to0 $, where $ \pi(c^*)^2=d\sqrt{\det K_H((r_*,0))}^{-1}. $

 The following result shows the asymptotic shape of $\zeta^\varepsilon$.

\begin{lemma}\label{le11}
There holds
\begin{equation*}
\zeta^\varepsilon \to  \check{\zeta}=\textbf{1}_{B_{c^*}(0)}
\end{equation*}
in $ L^p $ topology for any $ p\in(1,+\infty) $  as $ \varepsilon\to0 $.
\end{lemma}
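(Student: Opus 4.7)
The plan is to identify every weak-$L^p$ limit of $\zeta^\varepsilon$ with the ball profile $\check{\zeta}$ by showing that the energy maximization forces $\zeta^\varepsilon$ to asymptotically saturate the Riesz rearrangement inequality for the Newtonian self-energy; the built-in centering $\int \bar{u}\,\zeta^\varepsilon\,d\bar{u} = 0$ will then pin the limit down uniquely.

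First I would insert the Green's function decomposition from Proposition~\ref{expe of G_H} into $\mathcal{E}(\omega_\varepsilon)$ and perform the change of variables $x = X^\varepsilon + \varepsilon T_{X^\varepsilon}^{-1}\bar{u}$, $y = X^\varepsilon + \varepsilon T_{X^\varepsilon}^{-1}\bar{v}$. Using that $\det K_H(x) = k^2/(k^2+|x|^2)$ is radial, that $T_x$ is smooth, and that $S_{K_H}$ is uniformly bounded by Theorem~\ref{expe of G_K}, this yields
\begin{equation*}
\mathcal{E}(\omega_\varepsilon) = \frac{d^2}{4\pi\sqrt{\det K_H(X^\varepsilon)}}\ln\frac{1}{\varepsilon} + \frac{\sqrt{\det K_H(X^\varepsilon)}}{4\pi}\,Q(\zeta^\varepsilon) + O(1),
\end{equation*}
where $Q(\zeta) := -\iint \zeta(\bar{u})\zeta(\bar{v})\ln|\bar{u} - \bar{v}|\,d\bar{u}\,d\bar{v}$. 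Because $X^\varepsilon$ is the center of mass of $\omega_\varepsilon$, the linear term in $\bar{u}$ in the expansion of $|X^\varepsilon + \varepsilon T_{X^\varepsilon}^{-1}\bar{u}|^2$ integrates to zero, and consequently $\mathcal{I}(\omega_\varepsilon) = \tfrac12 d|X^\varepsilon|^2 + O(\varepsilon^2)$.

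Next I would plug into $\mathcal{E}_\varepsilon(\omega_\varepsilon)\ge\mathcal{E}_\varepsilon(\tilde{\omega}_\varepsilon)$ the explicit ball competitor $\tilde{\omega}_\varepsilon := \varepsilon^{-2}\mathbf{1}_{T_{X^\varepsilon}^{-1}B_{c_\varepsilon}(0) + X^\varepsilon}$ with $\pi c_\varepsilon^2 = d/\sqrt{\det K_H(X^\varepsilon)}$, chosen so that its rescaled version is exactly $\check{\zeta}_\varepsilon := \mathbf{1}_{B_{c_\varepsilon}(0)} = (\zeta^\varepsilon)^{\bigtriangleup}$ and which shares with $\omega_\varepsilon$ both the mass $d$ and the center of mass $X^\varepsilon$. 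Applying the same expansion to $\tilde{\omega}_\varepsilon$ and subtracting, the $\ln(1/\varepsilon)$-singular terms cancel exactly (same point, same mass), the moment-of-inertia terms agree up to $O(\varepsilon^2\ln\varepsilon^{-1})$, and the inequality reduces to
\begin{equation*}
Q(\zeta^\varepsilon) \ge Q(\check{\zeta}_\varepsilon) - o(1).
\end{equation*}
In the opposite direction, Riesz's rearrangement inequality applied to $-\ln|\cdot|$ (after adding a constant to make it nonnegative on the bounded support) gives $Q(\zeta^\varepsilon) \le Q(\check{\zeta}_\varepsilon)$. Since $\check{\zeta}_\varepsilon \to \check{\zeta}$ strongly in $L^p$, I conclude $Q(\zeta^\varepsilon) \to Q(\check{\zeta})$.

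Along a subsequence, $\zeta^\varepsilon \rightharpoonup \zeta_\infty$ weakly in every $L^p$, with $0\le \zeta_\infty \le 1$ and $\int \zeta_\infty = \pi(c^*)^2$. By Calder\'on--Zygmund, the Newtonian potential $u^\varepsilon := \zeta^\varepsilon \ast (-\tfrac{1}{2\pi}\ln|\cdot|)$ is uniformly bounded in $C^{1,\alpha}_{\mathrm{loc}}$, hence converges locally uniformly to $u_\infty := \zeta_\infty \ast (-\tfrac{1}{2\pi}\ln|\cdot|)$; combined with weak convergence of $\zeta^\varepsilon$ this gives $Q(\zeta_\infty) = \lim Q(\zeta^\varepsilon) = Q(\check{\zeta})$. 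Therefore $\zeta_\infty$ maximizes $Q$ on $\{0\le \zeta \le 1,\ \int \zeta = \pi(c^*)^2\}$. Since the kernel $-\ln|\cdot|$ is positive definite on a bounded set modulo a constant, $Q$ is strictly convex on this class, so its maximum is attained at extreme points, i.e.\ characteristic functions; the strict equality case of Riesz's inequality then forces $\zeta_\infty$ to be the characteristic function of a disc of radius $c^*$, uniquely up to translation. The centering $\int \bar{u}\,\zeta_\infty(\bar{u})\,d\bar{u} = 0$ (inherited from $\zeta^\varepsilon$) fixes the center at the origin, giving $\zeta_\infty = \check{\zeta}$. Strong convergence finally follows from
\begin{equation*}
\|\zeta^\varepsilon - \check{\zeta}\|_{L^p}^p = |K_\varepsilon \bigtriangleup B_{c^*}(0)| = |K_\varepsilon| + |B_{c^*}(0)| - 2\int \zeta^\varepsilon \check{\zeta}\,d\bar{u} \longrightarrow 0
\end{equation*}
by weak convergence tested against $\check{\zeta}$.

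The main obstacle I expect is the sharp error control in the energy expansion after subtraction against $\tilde{\omega}_\varepsilon$: the contributions of the regular part $S_{K_H}$, of the smooth variation of $T_x$ and of the prefactor $\tfrac12(\det K_H(x)^{-1/2}+\det K_H(y)^{-1/2})$ must cancel across the two configurations to order $o(1)$, not merely $O(1)$. This is precisely where the structure theorem for $G_K$ (Theorem~\ref{expe of G_K}) is used at full strength: without the explicit decomposition of the singular part, one could not match the two profiles at the required precision, and the rigidity step $Q(\zeta^\varepsilon) \ge Q(\check{\zeta}_\varepsilon) - o(1)$ would fail.
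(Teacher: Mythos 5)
Your argument is correct and is essentially the paper's proof: the same rescaled two-sided comparison --- Riesz rearrangement on one side, and maximality of $\omega_\varepsilon$ against the symmetrized competitor on the other, computed through the Green's function expansion of Proposition \ref{expe of G_H}, the local H\"older continuity of $S_{K_H}$ and the $O(\varepsilon)$ diameter bound of Lemma \ref{sharp}, which supply exactly the $o(1)$ cancellation (the paper gets $O(\varepsilon^\gamma)$) that you flag as the main obstacle --- followed by equality-case rigidity and the centering $\int x\,\zeta\,dx=0$. The only deviations are in the endgame and are minor: the paper invokes Lemma 3.2 of \cite{BG} for the rigidity step where you argue via strict convexity of the logarithmic energy on the fixed-mass class together with the Riesz equality case, and it deduces strong $L^p$ convergence from weak convergence plus norm convergence and uniform convexity of $L^p$, whereas you use the symmetric-difference identity for the two characteristic functions.
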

\begin{proof}
On the one hand, by  the Riesz' rearrangement inequality, we have
	\begin{equation*}
	\iint\limits_{B_{r_2}(0)\times B_{r_2}(0)}\ln\frac{1}{|x-x'|}\zeta^\varepsilon(x)\zeta^\varepsilon(x')dxdx'
	\le \iint\limits_{B_{r_2}(0)\times B_{r_2}(0)}\ln\frac{1}{|x-x'|}(\zeta^\varepsilon)^\bigtriangleup(x)(\zeta^\varepsilon)^\bigtriangleup(x')dxdx'.
	\end{equation*}
	Hence passing $ \varepsilon $ to limit, one has
	\begin{equation}\label{226}
	\iint\limits_{B_{r_2}(0)\times B_{r_2}(0)}\ln\frac{1}{|x-x'|}\zeta(x)\zeta(x')dxdx'\le \iint\limits_{B_{r_2}(0)\times B_{r_2}(0)}\ln\frac{1}{|x-x'|}\check{\zeta}(x)\check{\zeta}(x')dxdx'.
	\end{equation}
On the other hand, let $\check{\omega}_\varepsilon\in\mathcal{M}_\varepsilon$ be defined as
	\[
\check{\omega}_\varepsilon(x)=\left\{
	\begin{array}{lll}
	\frac{1}{\varepsilon^2}(\zeta^\varepsilon)^\bigtriangleup\left (\frac{T_{X^\varepsilon}(x-X^\varepsilon)}{\varepsilon}\right ) & \ \    \text{if}\ \  |T_{X^\varepsilon}(x-X^\varepsilon)|\leq r_2\varepsilon, \\
	0                  & \ \    \text{otherwise}.
	\end{array}
	\right.
	\]
Then $\check{\omega}_\varepsilon\in\mathcal{M}_\varepsilon$ for $ \varepsilon $ small enough. One computes directly that
	\begin{equation*}
	\begin{split}
\mathcal{E}_\varepsilon(\omega_\varepsilon) =&\frac{d^2\sqrt{\det K_H(X^\varepsilon)}^{-1}}{4\pi}\ln\frac{1}{\varepsilon}+\frac{\sqrt{\det K_H(X^\varepsilon)}}{4\pi}\iint\limits_{B_{r_2}(0)\times B_{r_2}(0)}\ln\frac{1}{|x-x'|}\zeta^\varepsilon(x)\zeta^\varepsilon(x')dxdx' \\
	& +\frac{1}{2}S_{K_H}(X^\varepsilon,X^\varepsilon)d^2-\frac{\alpha(X_\varepsilon)^2d}{2}\ln\frac{1}{\varepsilon}+O(\varepsilon^\gamma),
	\end{split}
	\end{equation*}
	and
	\begin{equation*}
\begin{split}
\mathcal{E}_\varepsilon(\check{\omega}_\varepsilon) =&\frac{d^2\sqrt{\det K_H(X^\varepsilon)}^{-1}}{4\pi}\ln\frac{1}{\varepsilon}+\frac{\sqrt{\det K_H(X^\varepsilon)}}{4\pi}\iint\limits_{B_{r_2}(0)\times B_{r_2}(0)}\ln\frac{1}{|x-x'|}(\zeta^\varepsilon)^\bigtriangleup(x)(\zeta^\varepsilon)^\bigtriangleup(x')dxdx' \\
& +\frac{1}{2}S_{K_H}(X^\varepsilon,X^\varepsilon)d^2-\frac{\alpha(X_\varepsilon)^2d}{2}\ln\frac{1}{\varepsilon}+O(\varepsilon^\gamma),
\end{split}
\end{equation*}
for some $ \gamma\in(0,1). $ Recalling that $\mathcal{E}_\varepsilon(\omega_\varepsilon)\geq \mathcal{E}_\varepsilon(\check{\omega}_\varepsilon) $, we have
	\begin{equation*}
\iint\limits_{B_{r_2}(0)\times B_{r_2}(0)}\ln\frac{1}{|x-x'|}\zeta^\varepsilon(x)\zeta^\varepsilon(x')dxdx'
\geq \iint\limits_{B_{r_2}(0)\times B_{r_2}(0)}\ln\frac{1}{|x-x'|}(\zeta^\varepsilon)^\bigtriangleup(x)(\zeta^\varepsilon)^\bigtriangleup(x')dxdx'+O(\varepsilon^\gamma).
\end{equation*}	
Thus we conclude that
	\begin{equation*}
	\iint\limits_{B_{r_2}(0)\times B_{r_2}(0)}\ln\frac{1}{|x-x'|}\zeta(x)\zeta(x')dxdx'\ge \iint\limits_{B_{r_2}(0)\times B_{r_2}(0)}\ln\frac{1}{|x-x'|}\check{\zeta}(x)\check{\zeta}(x')dxdx',
	\end{equation*}
which together with $\eqref{226}$ yields to
	\begin{equation*}
	\iint\limits_{B_{r_2}(0)\times B_{r_2}(0)}\ln\frac{1}{|x-x'|}\zeta(x)\zeta(x')dxdx'= \iint\limits_{B_{r_2}(0)\times B_{r_2}(0)}\ln\frac{1}{|x-x'|}\check{\zeta}(x)\check{\zeta}(x')dxdx'.
	\end{equation*}
By lemma 3.2 in \cite{BG}, there exists a translation $\mathcal{T}_0$ in $\mathbb{R}^2$ such that $\mathcal{T}_0\zeta=\check{\zeta}$. Note that $$ \int_{B_{r_2}(0)}x\zeta(x)dx=\int_{B_{r_2}(0)}x\check{\zeta}(x)dx=0. $$
Thus we get $\zeta=\check{\zeta}$  and   $ \zeta^\varepsilon \to  \check{\zeta}=\textbf{1}_{B_{c^*}(0)} $ in $ L^p $ weak topology as $ \varepsilon\to0. $ By the weak lower semi-continuity of $ L^p $ norm and \eqref{4-14}, we get
\begin{equation*}
(\pi (c^*)^2)^{\frac{1}{p}}=||\check{\zeta}||_{L^p}\le \liminf_{\varepsilon\to 0^+}||\zeta^\varepsilon||_{L^p}\le \limsup_{\varepsilon\to 0^+}||\zeta^\varepsilon||_{L^p}\leq (\pi (c^*)^2)^{\frac{1}{p}}.
\end{equation*}
Using the strict convexity of $ L^p $ norm, we finish the proof.
\end{proof}

\begin{remark}
Indeed, we can further prove that the boundary
of $ supp(\zeta^\varepsilon) $ is a $ C^1 $ curve and converges to $ \partial B_{c^*}(0) $ in
$ C^1 $ sense as $ \varepsilon\to0 $, see \cite{T} for instance. Thus from \eqref{4-15}, we know that the limiting shape of $ supp(\eta^\varepsilon) $ is an ellipse $ T^{-1}_{x_*}B_{c^*}(0) $, rather than a disc.
\end{remark}

\section{Proof of Theorem \ref{thm01}}
Having made all the preparation, we are to give proof of Theorem \ref{thm01}. Let $k>0, d>0, R^*>0$  be fixed numbers. For any $ r_*\in (0,R^*) $,  let
\begin{equation*}
\alpha=\frac{d}{4\pi k\sqrt{k^2+r_*^2}}.
\end{equation*}
From Proposition \ref{exist of max}, we know that for any $ \varepsilon \in(0,\min\{1, \sqrt{|\Omega|/d}\} ) $ all the maximizers of $ \mathcal{E}_\varepsilon $ over $ \mathcal{M}_\varepsilon $ is of the form
\begin{equation*}
\omega_{\varepsilon}=\frac{1}{\varepsilon^2}\mathbf{1}_{\{\mathcal{G}_{K_H}\omega_{\varepsilon}-\frac{\alpha |x|^2}{2}\ln\frac{1}{\varepsilon} -\mu^{\varepsilon}>0\}}  \ \ a.e.\  \text{in}\  \Omega,
\end{equation*}
where $\mu^{\varepsilon} $ is a  Lagrange multiplier determined by $\omega_{\varepsilon}$. Moreover, $  \omega_{\varepsilon} $ is a weak solution of \eqref{rot eq2} with $ f_\varepsilon(t)=\frac{1}{\varepsilon^2}\textbf{1}_{\{t>\mu^\varepsilon\}} $, which implies that the solution pair $ (\omega_{\varepsilon}, \mathcal{G}_{K_H}\omega_{\varepsilon}) $ satisfies \eqref{rot eq}.

By Proposition \ref{le6} and Lemma \ref{sharp}, the support set $ \bar{A}_\varepsilon $ of the maximizer $ \omega_\varepsilon $ concentrates near $ (r_*,0) $ up to rotation as $ \varepsilon\to0 $ with  $\int_{\Omega}\omega_\varepsilon dx=\frac{1}{\varepsilon^2} \int_{\bar{A}_\varepsilon}\mathbf{1}_{\{\mathcal{G}_{K_H}\omega_{\varepsilon}-\frac{\alpha |x|^2}{2}\ln\frac{1}{\varepsilon} -\mu^{\varepsilon}>0\}}dx=d $. Define for any $ (x_1,x_2,x_3)\in B_{R^*}(0)\times\mathbb{R}, t\in\mathbb{R}  $
\begin{equation*}
\mathbf{w}_\varepsilon(x_1,x_2,x_3,t)= \frac{w_\varepsilon(x_1,x_2,x_3,t)}{k}\overrightarrow{\zeta},
\end{equation*}
where $ w_\varepsilon(x_1,x_2,x_3,t) $ is a helical function satisfying
\begin{equation}\label{503}
w_\varepsilon(x_1,x_2,0,t)=\omega_\varepsilon(\bar{R}_{-\alpha|\ln\varepsilon| t}(x_1,x_2))=\frac{1}{\varepsilon^2}\mathbf{1}_{\{\mathcal{G}_{K_H}\omega_{\varepsilon}(\bar{R}_{-\alpha|\ln\varepsilon| t}(x_1,x_2))-\frac{\alpha |x|^2}{2}\ln\frac{1}{\varepsilon} -\mu^{\varepsilon}>0\}}.
\end{equation}
Direct computations show that $ w_\varepsilon(x_1,x_2,0,t) $ is a rotating vortex patch to \eqref{vor str equ2} and $ \mathbf{w}_\varepsilon $ is a left-handed helical vorticity field of \eqref{Euler eq2}. Moreover, $ w_\varepsilon(x_1,x_2,0,t) $ rotates clockwise around the origin with angular velocity $ \alpha|\ln\varepsilon| $. By \eqref{1001}, the circulation of $ \mathbf{w}_\varepsilon $ satisfies
\begin{equation*}
\iint_{A_\varepsilon}\mathbf{w}_\varepsilon\cdot\mathbf{n}d\sigma=\frac{1}{\varepsilon^2} \int_{\bar{A}_\varepsilon}\mathbf{1}_{\{\mathcal{G}_{K_H}\omega_{\varepsilon}-\frac{\alpha |x|^2}{2}\ln\frac{1}{\varepsilon} -\mu^{\varepsilon}>0\}}dx=d.
\end{equation*}

It suffices to prove that the vorticity field $ \mathbf{w}_\varepsilon $ tends asymptotically to \eqref{1007} in sense of \eqref{1005}.  Define $ P(\tau) $ the intersection point of the curve  parameterized by \eqref{1007} and the $ x_1Ox_2 $ plane. Note that the helix \eqref{1007} corresponds uniquely to the motion of $ P(\tau) $. Taking $ s=\frac{b_1\tau}{k} $ into \eqref{1007}, one computes directly that  $ P(\tau) $ satisfies a 2D point vortex model
\begin{equation*}
\begin{split}
P(\tau)=\bar{R}_{\alpha'\tau}((r_*,0)),
\end{split}
\end{equation*}
where
\begin{equation*}
\alpha'=\frac{1}{\sqrt{k^2+r_*^2}}\left( a_1+\frac{b_1}{k}\right)=\frac{d}{4\pi k\sqrt{k^2+r_*^2}},
\end{equation*}
which is equal to $ \alpha $. Thus by the construction, we  readily check that the support set of $ w_\varepsilon(x_1,x_2,0,|\ln\varepsilon|^{-1}\tau) $ defined by  \eqref{503} concentrates near $ P(\tau) $ as $ \varepsilon\to 0 $, which implies that, the vorticity field $ \mathbf{w}_\varepsilon $ tends asymptotically to \eqref{1007} in sense of \eqref{1005}.
The proof of Theorem \ref{thm01} is thus complete.

\iffalse

\textbf{Proof of Theorem \ref{thm02}}: The proof of Theorem \ref{thm02} is similar to that of Theorem \ref{thm01}, by constructing multiple concentration solutions $ u_\varepsilon $ of \eqref{eq01} (or equivalently, $ v_\delta $ of \eqref{eq02}) with polygonal symmetry. Indeed, let $ m\geq2 $ be an integer, $ r_*\in (0,R) $, $ c>0 $ and $ \alpha,\beta $ satisfying \eqref{502}. For any $ z_1\in B_{\bar{\rho}}((r_*,0)) $, define $ z_i=\bar{R}_{\frac{2(i-1)\pi}{m}}(z_1) $ for $ i=2,\cdots,m. $ Our goal is to construct solutions of \eqref{eq02} being of the form $  \sum_{i=1}^mV_{\delta,z_i}+\omega_{\delta} $. Using the symmetry of $ K_H, q $ and $ B_{R^*}(0) $, one can  prove that
\begin{equation*}
P_\delta(z_1,\cdots,z_m)=\frac{m\pi\delta^2}{\ln\frac{R}{\varepsilon}}q^2\sqrt{det(K_H)}(z_1)+N_\delta(z_1),
\end{equation*}
where $ N_\delta(z_1) $ is a $ O\left( \frac{\delta^2\ln|\ln\varepsilon|}{|\ln\varepsilon|^2}\right)  -$perturbation term which is invariant under rotation. The rest of the proof is exactly the same as
in that of Theorem \ref{thm01} and we omit the details.

\fi

\section{Orbital stability}

Now we give proof of Theorem \ref{os}.
To this end, we need three preliminary lemmas first. Using the energy characterization that any element in $ \mathcal{S}_\varepsilon $ is a maximizer of $ \mathcal{E}_\varepsilon $ in $\mathcal{M}_\varepsilon $,  we can obtain the following compactness result.
\begin{lemma}\label{com}
$\mathrm{[Compactness]}$	Let $\{w_n\}$ be a maximizing sequence for $\mathcal{E}_\varepsilon $ in $\mathcal{M}_\varepsilon $, then up to a subsequence there exists $w^\varepsilon\in \mathcal{S}_\varepsilon $ such that as $n\rightarrow+\infty$, $w_n\rightarrow w^\varepsilon$ in $L^p(\Omega)$ for any $p\in[1,+\infty)$.
\end{lemma}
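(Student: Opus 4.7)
The plan is to first extract a weak limit which turns out to be a maximizer, and then upgrade the weak convergence to strong $L^p$ convergence by exploiting the fact that every element of $\mathcal{S}_\varepsilon$ takes only the two values $0$ and $1/\varepsilon^2$ (Proposition \ref{exist of max}).

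First, since $\mathcal{M}_\varepsilon$ is bounded in $L^\infty(\Omega)$ and closed under weak-$*$ convergence, a subsequence of $\{w_n\}$ (still denoted $\{w_n\}$) converges weakly-$*$ in $L^\infty(\Omega)$, and hence weakly in $L^2(\Omega)$, to some $w^\varepsilon\in\mathcal{M}_\varepsilon$. Because $\mathcal{G}_{K_H}:L^2(\Omega)\to W^{2,2}(\Omega)\hookrightarrow L^2(\Omega)$ is compact, we have $\mathcal{G}_{K_H}w_n\to\mathcal{G}_{K_H}w^\varepsilon$ strongly in $L^2$, so $\int_\Omega w_n\mathcal{G}_{K_H}w_n\,dx\to\int_\Omega w^\varepsilon\mathcal{G}_{K_H}w^\varepsilon\,dx$. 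The moment of inertia $\mathcal{I}$ is linear and continuous under weak convergence since $|x|^2\in L^\infty(\Omega)$. Consequently $\mathcal{E}_\varepsilon(w_n)\to\mathcal{E}_\varepsilon(w^\varepsilon)=\sup_{\mathcal{M}_\varepsilon}\mathcal{E}_\varepsilon$, so $w^\varepsilon\in\mathcal{S}_\varepsilon$.

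The main obstacle is upgrading this weak convergence to strong $L^p$ convergence, and the key ingredient is that $w^\varepsilon=\frac{1}{\varepsilon^2}\mathbf{1}_{A^\varepsilon}$ for some measurable $A^\varepsilon\subset\Omega$ by Proposition \ref{exist of max}. Set $M=1/\varepsilon^2$. Since $0\le w_n\le M$ on $\Omega$, we have $w_n\le M=w^\varepsilon$ on $A^\varepsilon$ and $w_n\ge 0=w^\varepsilon$ on $\Omega\setminus A^\varepsilon$, so
\begin{equation*}
|w_n-w^\varepsilon|=(M-w_n)\mathbf{1}_{A^\varepsilon}+w_n\mathbf{1}_{\Omega\setminus A^\varepsilon}.
\end{equation*}
Using $\int_\Omega w_n\,dx=d=\int_\Omega w^\varepsilon\,dx=M|A^\varepsilon|$, we rewrite $\int_{A^\varepsilon}(M-w_n)\,dx=\int_{\Omega\setminus A^\varepsilon}w_n\,dx$ and deduce
\begin{equation*}
\|w_n-w^\varepsilon\|_{L^1(\Omega)}=2\int_{\Omega\setminus A^\varepsilon}w_n\,dx=2\Big(d-\int_\Omega w_n\mathbf{1}_{A^\varepsilon}\,dx\Big).
\end{equation*}
Testing the weak $L^2$ convergence against $\mathbf{1}_{A^\varepsilon}\in L^2(\Omega)$ gives $\int_\Omega w_n\mathbf{1}_{A^\varepsilon}\,dx\to\int_\Omega w^\varepsilon\mathbf{1}_{A^\varepsilon}\,dx=M|A^\varepsilon|=d$, so $\|w_n-w^\varepsilon\|_{L^1(\Omega)}\to 0$.

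Finally, strong convergence in $L^p$ for every $p\in[1,+\infty)$ follows from the $L^\infty$ bound by interpolation: $\|w_n-w^\varepsilon\|_{L^p}\le\|w_n-w^\varepsilon\|_{L^1}^{1/p}\|w_n-w^\varepsilon\|_{L^\infty}^{1-1/p}\le (2M)^{1-1/p}\|w_n-w^\varepsilon\|_{L^1}^{1/p}\to 0$. This completes the argument.
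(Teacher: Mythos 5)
Your argument is correct, and it follows the paper's proof in its first half: extract a weak-$*$ (hence weak $L^2$) limit $w^\varepsilon\in\mathcal{M}_\varepsilon$, use compactness of $\mathcal{G}_{K_H}:L^2(\Omega)\to L^2(\Omega)$ together with weak continuity of $\mathcal{I}$ to pass the energy to the limit, and conclude $w^\varepsilon\in\mathcal{S}_\varepsilon$; like the paper, you then invoke the patch form $w^\varepsilon=\frac{1}{\varepsilon^2}\mathbf{1}_{A^\varepsilon}$, $|A^\varepsilon|=d\varepsilon^2$, from Proposition \ref{exist of max} (whose variational argument indeed applies to any maximizer, which is how the paper itself uses it). Where you diverge is the upgrade from weak to strong convergence. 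The paper works in $L^2$: weak lower semicontinuity gives $\|w^\varepsilon\|_{L^2}\le\liminf\|w_n\|_{L^2}$, while the pointwise bound $w_n^2\le\varepsilon^{-2}w_n$ and the mass constraint give $\|w_n\|_{L^2}\le\sqrt{d}/\varepsilon=\|w^\varepsilon\|_{L^2}$, so the norms converge and uniform convexity (Radon--Riesz) yields strong $L^2$ convergence, after which the $L^\infty$ bound gives all $p<\infty$. You instead exploit the bang--bang structure directly: since $w^\varepsilon$ takes only the extreme values $0$ and $1/\varepsilon^2$ and $\int_\Omega w_n=d$, the identity $\|w_n-w^\varepsilon\|_{L^1}=2\left(d-\int_\Omega w_n\mathbf{1}_{A^\varepsilon}\,dx\right)$ reduces strong $L^1$ convergence to testing the weak convergence against the single function $\mathbf{1}_{A^\varepsilon}$, and interpolation with the uniform $L^\infty$ bound finishes. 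Both routes hinge on the same fact, namely that the limit is an extreme point of $\mathcal{M}_\varepsilon$ saturating the constraints; yours is slightly more elementary (no uniform convexity of $L^2$ needed) and makes the mechanism transparent, while the paper's $L^2$ norm-saturation argument is shorter to state given the explicit value $\sqrt{d}/\varepsilon$. Either proof is complete and correct.
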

\begin{proof}
Since $\{w_n\}$ is a bounded sequence in $L^\infty(\Omega)$, it suffices to show that $w_n\rightarrow w^\varepsilon$ in $L^2(\Omega)$.
According to the proof of Proposition \ref{exist of max}, for any maximizing sequence $w_n$, there must be a maximizer $w^\varepsilon\in \mathcal{M}_\varepsilon $ such that $w_n\rightarrow w^\varepsilon$ weakly star in $L^\infty(\Omega)$. Thus $w_n\rightarrow w^\varepsilon$ weakly in $L^2(\Omega)$, which implies
	\begin{equation}\label{4-01}
	\|w^\varepsilon\|_{L^2}\leq\liminf_{n\rightarrow+\infty}\|w_n\|_{L^2}.
	\end{equation}
	On the other hand, by Proposition \ref{exist of max}, $w^\varepsilon=\frac{1}{\varepsilon^2} \textbf{1}_{\tilde{A}^\varepsilon}$ with $|\tilde{A}^\varepsilon|=d\varepsilon^2$, which gives
	\begin{equation}\label{4-02}
	\|w^\varepsilon\|_{L^2 }=\frac{1}{\varepsilon^2}|\tilde{A}^\varepsilon|^{1/2}=\frac{\sqrt{d}}{\varepsilon}.
	\end{equation}
	But for any $n$,
	\begin{equation}\label{4-03}
	\|w_n\|_{L^2} \leq\frac{1}{\varepsilon}\left(\int_\Omega|w_n(x)|dx\right)^{1/2}=\frac{\sqrt{d}}{\varepsilon}.
	\end{equation}
	Combining \eqref{4-02} and \eqref{4-03} we obtain
	\begin{equation}\label{4-04}
	\|w^\varepsilon\|_{L^2}\geq\limsup_{n\rightarrow+\infty}\|w_n\|_{L^2}.
	\end{equation}
	Now by \eqref{4-01} and \eqref{4-04} we have
	\begin{equation*}
	\|w^\varepsilon\|_{L^2}=\lim_{n\rightarrow+\infty}\|w_n\|_{L^2}.
	\end{equation*}
	By the uniform convexity of $L^2$ we conclude that $w_n\rightarrow w^\varepsilon$ in $L^2$.

\end{proof}

For planar Euler flows, Burton obtained the linear transport theory of associated vorticity equations, see  lemmas 11 and 12 in \cite{B5}. Since in \eqref{vor str equ2} the velocity field $ \nabla^\perp \mathcal{G}_{K_H}w $ is also divergence free, following Burton's idea, one can get the linear transport theory of vorticity equations to 3D helical Euler flows as follows.
\begin{lemma}\label{dl}
	Let $w(x,t)\in L^\infty_{loc}(\mathbb R; L^p(\Omega))$ with $2\leq p<+\infty$. Let $\mathbf{u}=\nabla^\perp \mathcal{G}_{K_H}w$, $\zeta_0\in L^p(\Omega)$. Then there exists a weak solution $\zeta(x,t)\in L^\infty_{loc}(\mathbb R; L^p(\Omega))\cap C(\mathbb R; L^p(\Omega))$ to the following linear transport equation
	\begin{equation*}\label{lt}
	\begin{cases}
	\partial_t\zeta+\mathbf{u}\cdot\nabla\zeta=0, &t\in\mathbb R,\\
	\zeta(\cdot,0)=\zeta_0.
	\end{cases}
	\end{equation*}
	Here by weak solution we mean
	\begin{equation*}
	\begin{split}
	\int_{\mathbb R}\int_D\partial_t\phi(x,t)\zeta(x,t)+\zeta(x,t)&(\mathbf{u}\cdot\nabla\phi)(x,t)dxdt=0,\,\,\forall\,\,\phi\in C_c^\infty(D\times\mathbb R),\\
	&\lim_{t\rightarrow0}\|\zeta(\cdot,t)-\zeta_0\|_{L^p(D)}=0.
	\end{split}
	\end{equation*}
	Moreover, we have for any $t\in \mathbb R$
	\begin{equation*}\label{re}
	|\{x\in D\mid \zeta(x,t)>a\}|=|\{x\in D\mid\zeta_0(x)>a\}|,\,\,\forall \,\,a\in\mathbb R.
	\end{equation*}
	As a consequence, we have for any $t\in \mathbb R$
	\begin{equation*}
	\|\zeta(\cdot,t)\|_{L^p(D)}=\|\zeta_0\|_{L^p(D)}.
	\end{equation*}
	
\end{lemma}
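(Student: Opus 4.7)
The plan is to follow the approximation strategy used by Burton in the planar setting, adapted to the divergence-form elliptic operator $\mathcal{L}_{K_H}$. The first observation is that, by Proposition \ref{exist of G_K} applied to $K_H$, the operator $\mathcal{G}_{K_H}: L^p(\Omega) \to W^{2,p}(\Omega) \cap W^{1,p}_0(\Omega)$ is continuous, so $\mathbf{u} = \nabla^\perp \mathcal{G}_{K_H} w \in L^\infty_{loc}(\mathbb{R}; W^{1,p}(\Omega))$. This field is divergence free by construction and tangent to $\partial\Omega$ since $\mathcal{G}_{K_H}w$ vanishes there. Because $p\ge 2$, two-dimensional Sobolev embedding gives $\mathbf{u} \in L^\infty_{loc}(\mathbb{R}; L^q(\Omega))$ for every $q<\infty$, which guarantees that the product $\zeta\,\mathbf{u}\cdot\nabla\phi$ appearing in the weak formulation is integrable as soon as $\zeta \in L^p$.

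The first step is regularization. Mollify to obtain $w_n\in C^\infty(\overline{\Omega}\times\mathbb{R})$ with $w_n\to w$ in $L^\infty_{loc}(\mathbb{R}; L^p(\Omega))$ and set $\mathbf{u}_n = \nabla^\perp \mathcal{G}_{K_H} w_n$, which lies in $L^\infty_{loc}(\mathbb{R}; C^1(\overline{\Omega}))$ by elliptic regularity and is still divergence free and tangent to $\partial\Omega$. Pick $\zeta_{0,n}\in C^\infty_c(\Omega)$ with $\zeta_{0,n}\to\zeta_0$ in $L^p(\Omega)$. Classical ODE theory produces a flow $\Phi_n(\cdot,t)$ which is a $C^1$-diffeomorphism of $\overline{\Omega}$ for each $t$, and the combination of $\nabla\cdot\mathbf{u}_n=0$ with the tangency condition forces $\Phi_n(\cdot,t)$ to preserve Lebesgue measure. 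Define $\zeta_n(x,t):=\zeta_{0,n}\bigl(\Phi_n^{-1}(x,t)\bigr)$; this is the unique classical solution of the regularized transport problem, and measure preservation gives both the equidistribution $|\{\zeta_n(\cdot,t)>a\}|=|\{\zeta_{0,n}>a\}|$ and the conservation $\|\zeta_n(\cdot,t)\|_{L^p}=\|\zeta_{0,n}\|_{L^p}$ for every $t$.

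Next, uniform $L^p$ bounds produce a weakly-$\ast$ convergent subsequence $\zeta_n \rightharpoonup \zeta$ in $L^\infty_{loc}(\mathbb{R}; L^p(\Omega))$. Because $w_n\to w$ in $L^\infty_{loc}(\mathbb{R}; L^p(\Omega))$, elliptic regularity plus Sobolev embedding yield $\mathbf{u}_n\to\mathbf{u}$ strongly in $L^\infty_{loc}(\mathbb{R}; L^q(\Omega))$ for every finite $q$, so $\zeta_n\,\mathbf{u}_n\rightharpoonup \zeta\,\mathbf{u}$ in the sense of distributions, and passing to the limit in the weak formulation shows that $\zeta$ is a weak solution. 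To upgrade to preservation of the distribution function, I would invoke the DiPerna-Lions renormalization property: since $\mathbf{u}\in L^1_{loc}(\mathbb{R}; W^{1,p}(\Omega))$ with zero divergence and tangent boundary values, every weak $L^p$-solution is renormalized, whence $\beta(\zeta)$ is also a distributional solution for every truncation $\beta$ and in particular $|\{\zeta(\cdot,t)>a\}|$ is independent of $t$. This gives $\|\zeta(\cdot,t)\|_{L^p}=\|\zeta_0\|_{L^p}$ for all $t$. Constant $L^p$-norm together with weak continuity in time (automatic from the weak formulation) then upgrades to strong continuity in $L^p$ by uniform convexity (here we use $p\ge 2$).

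The main obstacle is the preservation of the distribution function under the weak limit, since weak convergence does not in general transmit equimeasurability. The cleanest remedy is the renormalization route just described, whose hypotheses are exactly met by a divergence-free field with $W^{1,p}$ spatial regularity tangent to $\partial\Omega$. A more hands-on alternative, closer to Burton's argument, is to show that the measure-preserving maps $\Phi_n(\cdot,t)$ converge in measure to a measure-preserving Borel map $\Phi(\cdot,t)$, define $\zeta(x,t):=\zeta_0\bigl(\Phi^{-1}(x,t)\bigr)$, and verify directly that this $\zeta$ is a weak solution; stability of flow maps under convergence of the drift in $W^{1,p}$ is again a consequence of the Ambrosio-DiPerna-Lions theory. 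Either route leverages $p\ge 2$ at precisely the two places where it is needed: the integrability of $\zeta\,\mathbf{u}$ in the weak formulation and the uniform convexity of $L^p$ in the final upgrade to $C(\mathbb{R}; L^p(\Omega))$.
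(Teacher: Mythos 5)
Your proposal is correct in substance and follows essentially the same route as the paper, which simply observes that $\nabla^\perp\mathcal{G}_{K_H}w$ is divergence free (and tangent to $\partial\Omega$) and then cites Burton's Lemmas 11--12 in \cite{B5}; your sketch is a reconstruction of exactly that approximation-by-smooth-flows argument, with the DiPerna--Lions renormalization supplying the equimeasurability in the limit. Only minor technical slips are present (mollification in time gives convergence in $L^r_{loc}$ rather than $L^\infty_{loc}$ in $t$, which suffices, and uniform convexity of $L^p$ holds for all $1<p<\infty$, so $p\ge 2$ is really needed in the commutator/renormalization step rather than there), none of which affect the argument.
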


\begin{proof}
See  lemmas 11 and 12  in \cite{B5}, for example.
\end{proof}

Using the results in \cite{Ben,B5}, one can get the energy and angular momentum conservation of solutions $ \omega $ to the vorticity equation \eqref{vor str eq}.
\begin{proposition}\label{E and M conserv}
Let $ 2\leq  p<\infty $. Let $ \omega(x,t)\in L^\infty(\mathbb R; L^p(\Omega))  $ be a
solution of the vorticity equation \eqref{vor str eq}. Then the kinetic energy $ \mathcal{E} $ defined by \eqref{KE}  and the angular
momentum $ \mathcal{I} $ defined by \eqref{MI}  are conserved along the time.
\end{proposition}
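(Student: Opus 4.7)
The plan is to derive both conservation laws from Hamiltonian identities at the formal level, then transfer them to $L^p$-weak solutions via the approximation scheme underlying Theorem~\ref{A}.

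The starting observation for the angular momentum is the rotational equivariance of the coefficient matrix, $K_H(\bar R_\theta x)=\bar R_\theta K_H(x)\bar R_\theta^{\mathrm T}$, which is immediate from the identity $K_H(x)=I-(k^2+|x|^2)^{-1}xx^{\mathrm T}$. Combined with the rotational symmetry of $\Omega=B_{R^*}(0)$, this yields $G_{K_H}(\bar R_\theta x,\bar R_\theta y)=G_{K_H}(x,y)$; differentiating in $\theta$ at $\theta=0$ produces the antisymmetry $\partial_\theta^xG_{K_H}=-\partial_\theta^yG_{K_H}$, with $\partial_\theta:=x_1\partial_{x_2}-x_2\partial_{x_1}$. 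Setting $\psi=\mathcal{G}_{K_H}\omega$, the formal computation
\begin{equation*}
\frac{d}{dt}\mathcal{I}(\omega)=-\frac{1}{2}\int_\Omega|x|^2\,\nabla^\perp\psi\cdot\nabla\omega\,dx=\int_\Omega\omega\,(x\cdot\nabla^\perp\psi)\,dx=\int_\Omega\omega\,\partial_\theta\psi\,dx
\end{equation*}
(the boundary term in the integration by parts vanishes because $\psi|_{\partial\Omega}=0$ forces $\nabla^\perp\psi\cdot n=0$ on $\partial\Omega$) reduces the claim to $\iint\omega(x)\omega(y)\partial_\theta^xG_{K_H}(x,y)\,dx\,dy=0$, which follows by swapping $x\leftrightarrow y$ and using both the symmetry $G_{K_H}(x,y)=G_{K_H}(y,x)$ and the antisymmetry just established.

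For the energy, the symmetry of $\mathcal{G}_{K_H}$ gives $\frac{d}{dt}\mathcal{E}(\omega)=\int_\Omega\psi\,\partial_t\omega\,dx=-\int_\Omega\psi\,\nabla^\perp\psi\cdot\nabla\omega\,dx$. Integration by parts, whose boundary contribution again vanishes because $\psi|_{\partial\Omega}=0$, converts this into $\int_\Omega\omega\,\nabla\psi\cdot\nabla^\perp\psi\,dx$, and this integrand is identically zero since $\nabla\psi\perp\nabla^\perp\psi$ pointwise.

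To legitimize these manipulations for $\omega\in L^\infty_{\mathrm{loc}}(\mathbb R;L^p(\Omega))$ with $p\geq 2$, note first that elliptic regularity yields $\psi(\cdot,t)\in W^{2,p}\cap W^{1,p}_0(\Omega)$ depending continuously on $\omega(\cdot,t)\in L^p$, so that $\mathcal{E}(\omega(t))$ and $\mathcal{I}(\omega(t))$ are well defined in $t$. I would approximate $\omega_0$ by smooth compactly supported $\omega_0^n$, invoke the approximation procedure of \cite{Ben} (based on Lemma~\ref{dl} and the DiPerna--Lions renormalization theory) to obtain regularized solutions $\omega^n$ for which the formal calculations above are legitimate, and then pass to the limit. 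The main technical obstacle is the passage to the limit in the quadratic functional $\mathcal{E}$: weak $L^p$-convergence of $\omega^n$ is insufficient, and one must exploit the strong $L^p$-compactness supplied by the rearrangement-invariance from Lemma~\ref{dl} together with the compactness arguments of \cite{Ben,B5}; once this is in place, continuity of $\mathcal{E}$ and $\mathcal{I}$ under $L^p$-convergence transfers the identities from $\omega^n$ to the limit $\omega$.
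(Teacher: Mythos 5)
The paper itself does not give an argument here: its ``proof'' is a one-line citation to Lemmas 4.6 and 4.8 of \cite{Ben}, so your sketch is by necessity a different, self-contained route. The formal part of your computation is correct and is exactly the algebraic core of the cited lemmas: the identity $K_H(x)=I-(k^2+|x|^2)^{-1}xx^{\mathrm T}$ does give the equivariance $K_H(\bar R_\theta x)=\bar R_\theta K_H(x)\bar R_\theta^{\mathrm T}$, hence $G_{K_H}(\bar R_\theta x,\bar R_\theta y)=G_{K_H}(x,y)$ and the antisymmetry of the angular derivative, which together with the symmetry of $G_{K_H}$ kills $\frac{d}{dt}\mathcal{I}$; the energy identity reduces, as you say, to $\nabla\psi\cdot\nabla^\perp\psi=0$ plus vanishing boundary terms coming from $\psi|_{\partial\Omega}=0$ (equivalently $\nabla^\perp\psi\cdot n=0$).

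The genuine gap is in your final paragraph, i.e.\ in how you propose to legitimize these manipulations. Proposition \ref{E and M conserv} is a statement about an \emph{arbitrary} weak solution $\omega\in L^\infty(\mathbb R;L^p(\Omega))$, and it is applied in the proof of the orbital stability theorem to arbitrary weak solutions $v^n_t$ with prescribed initial data. If you mollify $\omega_0$ and run the approximation procedure of \cite{Ben}, you obtain conservation for the particular solutions produced by that scheme; since uniqueness of weak solutions to \eqref{vor str eq} is not known for $4/3<p<\infty$, the limit of your regularized solutions need not be the given $\omega$, and no amount of strong $L^p$ compactness repairs this: you would have proved the proposition only for a special subclass of solutions, which is not enough for the stability argument. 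The correct strategy (and the one behind \cite{Ben}, in the spirit of Burton \cite{B5}) is to work directly with the given solution: insert into the weak formulation \eqref{997} test functions built from $\psi(\cdot,t)=\mathcal{G}_{K_H}\omega(\cdot,t)$ and from $|x|^2$, suitably cut off near $\partial\Omega$ and mollified in time, and pass to the limit using $p\geq2$ (so all products are integrable), the continuity $\omega\in C([0,\infty);L^p(\Omega))$ and the rearrangement invariance from Theorem \ref{A}, and the no-flux property $\nabla^\perp\psi\cdot n=0$ on $\partial\Omega$ to discard the boundary-layer errors. With that replacement your formal identities do yield the conservation of $\mathcal{E}$ and $\mathcal{I}$ for every weak solution, which is what the proposition asserts.
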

\begin{proof}
See  lemmas 4.6 and 4.8 in \cite{Ben}, for instance.
\end{proof}

Now we are ready to prove Theorem \ref{os}.
\begin{proof}[Proof of Theorem \ref{os}]
For simplicity of notation, we denote
\begin{equation*}
dist_p(\omega_0,\mathcal{S}_\varepsilon)=\inf_{\omega\in\mathcal{S}_\varepsilon}\|\omega_0-\omega\|_{L^p(\Omega)}.
\end{equation*}
We give the proof by contradiction. Suppose that there exists a $\rho_0>0$, $t_n>0,$ $v_0^n\in L^p(\Omega)$ satisfying $dist_p(v_0^n,\mathcal{S}_\varepsilon)\rightarrow0,$ but
	\begin{equation}\label{444}
	dist_p(v^n_{t_n},\mathcal{S}_\varepsilon)>\rho_0.
	\end{equation}
Here $v^n_{t}$ is a weak solution to the vorticity equation with initial $v^n_0.$
Since $2\leq p<+\infty$, it follows from the energy and angular momentum conservation in Proposition \ref{E and M conserv}  that $\{v^n_{t_n}\}$ satisfies
	\begin{equation}\label{4-06}
	\lim_{n\rightarrow+\infty}\mathcal{E}_\varepsilon(v^n_{t_n})=\sup_{\mathcal{M}_\varepsilon }\mathcal{E}_\varepsilon.
	\end{equation}

Since $dist_p(v^n_0,\mathcal{S}_\varepsilon)\rightarrow0$, we can choose $w^n_0\in\mathcal{S}_\varepsilon$ such that as $n\rightarrow+\infty$
	\begin{equation*}\label{4-07}
	\|w^n_0-v^n_0\|_{L^p}\rightarrow0.
	\end{equation*}
	Now for each $n$, let $w^n(x,t)$ be the solution of the following linear transport equation
	\begin{equation*}\label{4-08}
	\begin{cases}
	\partial_tw^n(x,t)+\nabla^\perp \mathcal{G}_{K_H}v^n_t\cdot\nabla w^n(x,t)=0,\\
	w^n(x,0)=w^n_0(x).
	\end{cases}
	\end{equation*}
	By Lemma \ref{dl}, it is clear that $w^n(\cdot,t)\in \mathcal{M}_\varepsilon $ for any $t>0$, and as $n\rightarrow+\infty$
	\begin{equation}\label{4-09}
	\|w^n(\cdot,t_n)-v^n_{t_n}\|_{L^p(D)}=\|w^n_0-v^n_0\|_{L^p(D)}\rightarrow0.
	\end{equation}
	Combining \eqref{4-06} and \eqref{4-09} we obtain
	\begin{equation*}\label{4-010}
	\lim_{n\rightarrow+\infty}\mathcal{E}_\varepsilon(w^n(\cdot,t_n))=\sup_{\mathcal{M}_\varepsilon }\mathcal{E}_\varepsilon.
	\end{equation*}
	Then by Lemma \ref{com} it follows that there exists $w^\varepsilon\in \mathcal{S}_\varepsilon$ such that $  \|w^n(\cdot,t_n)-w^\varepsilon\|_{L^p(\Omega)}\rightarrow0$, which gives
	\begin{equation}\label{4-011}
	dist_p(w^n(\cdot,t_n),\mathcal{S}_\varepsilon)\rightarrow0.
	\end{equation}
	Now \eqref{444}, \eqref{4-09} and \eqref{4-011} together lead to a contradiction. Thus Theorem \ref{os} is proved.
\end{proof}

\subsection*{Acknowledgments:}

\par
D. Cao was supported by NNSF of China (grant No. 11831009). J. Wan was supported by NNSF of China (grant No. 12101045) and  Beijing
Institute of Technology Research Fund Program for Young Scholars (No.3170011182016).

\subsection*{Conflict of interest statement} On behalf of all authors, the corresponding author states that there is no conflict of interest.

\subsection*{Data availability statement} All data generated or analysed during this study are included in this published article  and its supplementary information files.

\end{document}